\newtheorem*{acknowledgement}{Acknowledgement}
\newtheorem{theorem}{Theorem}
\newtheorem{corollary}{Corollary}
\newtheorem{lemma}[corollary]{Lemma}
\newtheorem{proposition}[corollary]{Proposition}
\newtheorem{remark}[corollary]{Remark}
\newcommand{\area}{\operatorname{area}}
\newcommand{\sus}{\operatorname{sus}}
\newcommand{\sign}{\operatorname{sign}}
\renewcommand{\bar}{\overline}
\renewcommand{\Re}{\operatorname{Re}}
\renewcommand{\hat}[1]{\widehat{#1}}
\newcommand{\rest}[1]{\downharpoonright_{#1}}
\newcommand\pa{\partial}
\newcommand\bpa{\overline{\partial}}
\newcommand{\db}{\bar{\partial}}
\newcommand{\SL}{\operatorname{SL}}
\newcommand{\ta}{\widetilde{a}}
\newcommand\fpint{\overline{\int_{0}^{\infty}}}
\newcommand\cf{cf\@. }
\newcommand\eps\varepsilon
\newcommand\p\phi
\renewcommand\P\Phi
\renewcommand\det{\operatorname{det}}
\newcommand\fD{\operatorname{\phi-hc}}
\newcommand\hc{\operatorname{hc}}
\newcommand\CI{{\mathcal{C}}^{\infty}}
\newcommand{\lrpar}[1]{\left( #1 \right)}
\newcommand\Ch{\operatorname{Ch}}
\newcommand\coker{\operatorname{coker}}
\newcommand\Diff{\operatorname{Diff}}
\DeclareMathOperator*{\FP}{\operatorname{FP}}
\newcommand\Id{\operatorname{Id}}
\newcommand\ind{\operatorname{ind}}
\newcommand\Ind{\operatorname{Ind}}
\newcommand\Str{\operatorname{Str}}
\newcommand\Td{\operatorname{Td}}
\newcommand\Tr{\operatorname{Tr}}
\newcommand\STr{\operatorname{STr}}
\newcommand\cusp{\operatorname{cu}}
\newcommand\bbA{\mathbb{A}}
\newcommand\bbC{\mathbb{C}}
\newcommand\bbH{\mathbb{H}}
\newcommand\bbN{\mathbb{N}}
\newcommand\bbR{\mathbb{R}}
\newcommand\bbS{\mathbb{S}}
\newcommand\bbZ{\mathbb{Z}}
\newcommand\cA{\mathcal{A}}
\newcommand\cB{\mathcal{B}}
\newcommand\cC{\mathcal{C}}
\newcommand\cE{\mathcal{E}}
\newcommand\cF{\mathcal{F}}
\newcommand\cH{\mathcal{H}}
\newcommand\cL{\mathcal{L}}
\newcommand\cM{\mathcal{M}}
\newcommand\cP{\mathcal{P}}
\newcommand\cT{\mathcal{T}}
\newcommand\cU{\mathcal{U}}
\newcommand\cV{\mathcal{V}}
\DeclareMathAlphabet{\mathpzc}{OT1}{pzc}{m}{it}
\newcommand\paperintro%
\newcommand\paperbody%
\newcommand{\pt}{\operatorname{pt}}
\newcommand{\even}{\operatorname{ev}}
\newcommand{\Mod}{\operatorname{Mod}}
\newcommand{\PSL}{\operatorname{PSL}}
\newcommand{\pr}{\operatorname{pr}}
\newcommand{\TZ}{\operatorname{TZ}}
\renewcommand{\Im}{\operatorname{Im}}
\renewcommand{\Re}{\operatorname{Re}}
\title[Local families index for $\db$-operators]{A local families index formula for $\db$-operators on punctured
Riemann surfaces}
\author{Pierre Albin}
\author{Fr\'ed\'eric Rochon}
\address{Department of Mathematics, Massachusetts Institute of Technology}
\email{pierre@math.mit.edu} 
\address{Department of Mathematics, University of Toronto}
\email{rochon@math.toronto.edu} 
\thanks{The first author was partially supported by a NSF postdoctoral fellowship. The second author was supported by a postdoctoral fellowship of the Fonds qu\'eb\'ecois de la recherche sur la nature et les technologies.}
\begin{document}


\begin{abstract}
Using heat kernel methods developed by Vaillant, a local index formula is obtained for
families of $\db$-operators on the Teichm\"uller universal curve of Riemann surfaces of genus $g$ with $n$ punctures.  
The formula also holds on the moduli space $\cM_{g,n}$ in the
sense of orbifolds where it can be written in terms of Mumford-Morita-Miller classes.  The degree two part of the formula gives the curvature of the corresponding 
determinant line bundle equipped with the Quillen connection, a result originally obtained by
Takhtajan and Zograf.
\end{abstract}

\maketitle
\tableofcontents

\section*{Introduction}\label{int.0}

Let $X$ be a smooth even dimensional oriented compact manifold with boundary $\pa X\ne \emptyset$.  Assume that the boundary is the total space of a fibration
\begin{equation}
\xymatrix{Z\ar@{-}[r]&\pa X\ar[d]^{\phi}\\
& Y}
\label{int.1}\end{equation}
where $Y$ and $Z$ are compact oriented manifolds, $Y$ being the base and $Z$ being a typical fibre.  Let $x\in \CI(X)$ be a boundary defining function for $X$ and 
\begin{equation}
  c: \pa X\times [0,\epsilon)_{x}\to N\subset X
\label{int.2}\end{equation}
a corresponding collar neighborhood of $\pa X$ in $X$.  Let $g_{\hc}$ be a metric on $X\setminus \pa X$ which takes the form
\begin{equation}
     c^{*} g_{\hc}= \frac{dx}{x^{2}}+ \phi^{*} g_{Y}+ x^{2} g_{Z}
\label{int.3}\end{equation}
in the collar neighborhood \eqref{int.2},
where $g_{Z}$ is a metric for the vertical tangent bundle of \eqref{int.1}
and $g_{Y}$ is a metric on the base $Y$ which is lifted to $\pa X$ using a choice
of connection for the fibration \eqref{int.1}.  Such a metric is called a product fibred hyperbolic cusp metric (product $d$-metric in the terminology of \cite{Vaillant}).  If the
manifolds $X$, $Y$ and $Z$ are spin, one can construct a Dirac operator associated to the
metric $g_{\hc}$.  More generally, one can consider a Dirac type operator $D$ constructed using
the metric $g_{\hc}$ and a Clifford module $E\to X$ with a choice of Clifford connection.  

In his thesis \cite{Vaillant}, Vaillant studied the index and the spectral theory of such
operators.  To do so, he introduced the conformally related operator $xD$ and defined the vertical family by $D^{V}:= \left. xD\right|_{\pa X}$, which is a family of operators on
$\pa X$ parametrized by the base $Y$ and acting on each fibre of \eqref{int.1}.  Assuming 
that the rank of $\ker D^{V}\to Y$ is constant so that it is a vector bundle over $Y$
(constant rank assumption), Vaillant also introduced a horizontal operator
\begin{equation}
   D^{H}: \CI(Y; \ker D^{V})\to \CI(Y;\ker D^{V})
\label{int.4}\end{equation}
which governs the continuous spectrum of $D$ with bands of continuous spectrum starting
at the eigenvalues of $D^{H}$ and going out at infinity.  In particular, the operator $D$ is 
Fredholm if and only if $D^{H}$ is invertible.  In that case, Vaillant was able to obtain
a formula for its index using heat kernel techniques and Getzler's rescaling along the lines
of \cite{APSbook},
\begin{equation}
   \ind(D)= \int_{X} \hat{A}(R_{\hc})\Ch( F^{E/S_{\hc}}) -
     \int_{Y} \hat{A}(R_{g_{Y}})\hat{\eta}(D^{V}) -\frac{1}{2}\eta(D^{H}),
\label{int.5}\end{equation}
the first term being the usual Atiyah-Singer integral, $\hat{\eta}(D^{V})$ being the 
Bismut-Cheeger eta form of the vertical family and $\eta(D^{H})$ being the eta invariant
of $D^{H}$.  

In \cite{Albin-Rochon}, the authors, inspired by the work of Melrose and Piazza in \cite{Melrose-Piazza1} and \cite{Melrose-Piazza2}, generalized the formula of Vaillant
to families of Dirac type operators.  Via the use of Fredholm perturbations, a notion intimately related to spectral sections, it was also possible to study situations where the constant rank
assumption is not satisfied, allowing among other things to generalize the index theorem
of Leichtnam, Mazzeo and Piazza \cite{LMP}.

The present paper, which is a sequel to \cite{Albin-Rochon}, intends to put into use the index
formula of \cite{Albin-Rochon} to study the following fundamental example arising in Teichm\"uller theory.  Assume that $2g+n\ge 3$ and let $T_{g,n}$ be the Teichm\"uller space of 
Riemann surfaces of genus $g$ with $n$ punctures.  Let $p: \cT_{g,n}\to T_{g,n}$ be the Teichm\"uller universal curve whose fibre above $[\Sigma]\in T_{g,n}$ is the
corresponding Riemann surface $\Sigma$ of genus $g$ with $n$ punctures.  Let 
$T^{i,j}_{v}\cT_{g,n}\to \cT_{g,n}$ be the $(i,j)$ vertical tangent bundle and let
$\Lambda^{i,j}_{v}$ be its dual.  In particular, $K_{v}:=\Lambda^{1,0}_{v}$ restricts on each
fibre $\Sigma$ to the corresponding canonical line bundle $K_{\Sigma}:=\Lambda^{1,0}_{\Sigma}$.

For each $\ell\in \bbZ$, one can associate a family of $\db$-operators
\begin{equation}
   \db_{\ell}: \CI(\cT_{g,n}; K_{v}^{\ell})\to \CI( \cT_{g,n}; \Lambda^{0,1}_{v}\otimes K_{v}^{\ell})
\label{int.6}\end{equation}
acting fibre by fibre on $p: \cT_{g,n}\to T_{g,n}$ and parametrized by the base $T_{g,n}$.  By
the uniformization theorem for Riemann surfaces, each fibre $\Sigma$ of $p: \cT_{g,n}\to
T_{g,n}$ comes equipped with a hyperbolic metric $g_{\Sigma}$.  Compactifying each fibre 
by a compact Riemann surface with boundary, these metrics can be seen as product hyperbolic 
cusp metrics, the fibration structure on the boundary being the collapsing map onto a point.
With these metrics, the family $\db_{\ell}$ can be interpreted as a family of Dirac-type operators associated to a family of product hyperbolic cusp metrics.  Using the criterion of
Vaillant \cite{Vaillant}, one can check that each member of the family is Fredholm.  
The formula of \cite{Albin-Rochon} therefore applies.

As described in \cite{Wolpert}, the fibration $p: \cT_{g,n}\to T_{g,n}$ is equipped with
a canonical connection.  This allows one to interpret the formula of \cite{Albin-Rochon} at the
level of forms.  In general, the eta forms involved in this formula are quite hard to compute.  However, in this specific example, an explicit computation is possible using a result of 
Zhang \cite{Zhang}, the vertical family being defined on a circle fibration.  The \textbf{main result of 
this paper}, theorem~\ref{lf.8}, gives the following local family index formula, 
\begin{multline}
\Ch( \Ind(\db_{\ell}))= 
      \int_{\cT_{g,n}/T_{g,n}}\Ch(T^{-\ell}_{v}(\cT_{g,n}))\Td(T_{v}\cT_{g,n}) 
+\frac{n}{2}\sign(\frac{1}{2}-\ell) \\
-\sum_{i=1}^{n}
\lrpar{ \frac{1}{2\tanh \lrpar{\frac{e_{i}}{2}}}-\frac{1}{e_{i}}}
-\lrpar{\frac{1}{2\pi \sqrt{-1}}}^{\frac{N}{2}}d\int_{0}^{\infty} \Str\lrpar{ \frac{\pa \bbA^{t}_{D_{\ell}}}{\pa t} e^{-(\bbA^{t}_{D_{\ell}})^{2}}} dt,
\label{int.7}\end{multline}  
where $\bbA^{t}_{D_{\ell}}$ is the Bismut superconnection and $N$ is the number operator
in $\Lambda T_{g,n}$.
To define the form $e_{i}$, let $\cL_{i}\to T_{g,n}$ be the complex line bundle which at
$[\Sigma]\in T_{g,n}$ is given by the restriction of $K_{v}$ at the $i$th puncture (marked point) of $\Sigma:= p^{-1}([\Sigma])$.  Then $e_{i}$ is the Chern form of $\cL_{i}$ as defined
by Wolpert \cite{Wolpert2}.  

Since the Teichm\"uller space $T_{g,n}$ is contractible, formula~\eqref{int.7} only contains
cohomological information in its degree zero part.  However, since it is local and each 
of its terms is invariant under the action of the Teichm\"uller modular group $\Mod_{g,n}$,
formula~\eqref{int.7} also holds on the moduli space $\cM_{g,n}= T_{g,n}/ \Mod_{g,n}$ in the
sense of orbifolds, where the fibration $p:\cT_{g,n}\to T_{g,n}$ is replaced
by the forgetful map $\pi_{n+1}: \cM_{g,n+1}\to \cM_{g,n}$ and where it acquires a topological meaning in higher degrees (see Corollary \ref{cor:Moduli}).  For instance, on the moduli space $\cM_{g,n}$, the Chern form $e_{i}$ represents the Miller class 
$\psi_{i}= c_{1}(\cL_{i})$, while the first term on the left-hand side of \eqref{int.7}  represents a linear combination of the Mumford-Morita classes 
\begin{equation}
   \kappa_{j}= (\pi_{n+1})_{*}( c_{1}(\psi^{j+1}_{n+1})), \quad j\in \bbN_{0}.
\label{int.8}\end{equation}
This formula could be thought of as a local version of the Grothendieck-Riemann-Roch
theorem applied to the forgetful map $\pi_{n+1}: \overline{\cM}_{g,n+1}\to \overline{\cM}_{g,n}$ and a certain sheaf on
$\overline{\cM}_{g,n+1}$ depending on $\ell$ (when $\ell=0$, it is the sheaf of sections of the trivial line bundle).  When $\ell=0$ or $\ell=1$, our formula agrees modulo boundary terms
with the one obtained by Bini \cite{Bini} using the Grothendieck-Riemann-Roch theorem.   

Our results should be compared with the result of Takhtajan and Zograf \cite{TZ} and 
Wolpert \cite{Wolpert2}, who gave the two form part of \eqref{int.7} by interpreting
it as the first Chern form of the corresponding determinant line bundle equipped with the 
Quillen connection.  
As in the compact case, the definition of the Quillen connection makes use of the determinant of the Laplacian.
However the presence of cusps induces continuous spectrum for the Laplacian and the usual definition of its determinant via zeta-regularization is necessarily more delicate.
Takhtajan and Zograf sidestepped this issue by defining the determinant in terms of the Selberg zeta function, in analogy with the compact case \cite{DP,Sarnak}.
The precise description of the heat kernel in \cite{Vaillant} allows us to proceed along the lines of 
\cite{APSbook, Piazza, BJP} and extend the zeta function definition to this context via renormalization.
Unlike previous efforts (see, e.g.,
\cite{Efrat1}, \cite{Efrat2} and \cite{Muller}) this definition does not make use of the hyperbolic structure of the underlying manifold and works more generally for the metrics considered in \cite{Vaillant, Albin-Rochon}. Furthermore we show that, for hyperbolic metrics on surfaces with cusps, the resulting zeta-regularized determinant coincides with that defined using the Selberg zeta function up to a universal constant
(see theorem~\ref{as.11} and corollary~\ref{lge.9})
\begin{equation}
\det'(\Delta_{\ell}) = \left\{ \begin{array}{ll}
         \alpha_{\ell,g,n} Z_{\Sigma}(\ell), &   \ell\ge 2;\\
         \alpha_{\ell,g,n}Z_{\Sigma}'(1), & \ell=0,1;
   \end{array}\right.
\label{int.9}\end{equation}
when $\ell\ge 0$, 
where $\alpha_{\ell,g,n}$ is a constant only depending on $\ell$, $g$ and $n$.
  With this determinant and thanks to the fact $\ker\db_{\ell}$ is a
holomorphic vector bundle on $T_{g,n}$,
 the construction of the Quillen connection and the computation of the 
 curvature are essentially as in \cite{BGV}, \cite{BF} with only minor changes.  In this way, 
 we recover
 the index formula of \cite{TZ} (see also \cite{Weng} and \cite{Wolpert2}), 
 \begin{equation*}
 \begin{split}
\frac{\sqrt{-1}}{2\pi}(\nabla^{Q_{\ell}})^{2}&= \frac{1}{2\pi i} \lrpar{
                   \int_{\cT_{g,n}/T_{g,n}} \Ch\lrpar{ T^{-\ell}(\cT_{g,n}/T_{g,n})} \cdot
                          \Td\lrpar{T\lrpar{\cT_{g,n}/T_{g,n}}} }_{[2]}  \\ 
                & -\sum_{i=1}^{n} \frac{e_{i}}{12},
\end{split}
 \label{int.10}\end{equation*}
 see theorem~\ref{qc.25} and corollary~\ref{qc.46} below.   


Our approach and the one of Takhtajan and Zograf \cite{TZ} use substantially the 
fact that the dimension of the kernel of the family $\db_{\ell}$ does not jump, so that these kernels fit together into a 
vector bundle on the Teichm\"uller space.  More generally, one can ask if 
the work of Bismut, Gillet and Soul\'e \cite{BGS1, BGS2, BGS3} for the 
determinant of $\db$-operators arising on (compact) K\"ahler fibrations could 
be adapted to non-compact situations in order to deal with examples where
the rank of the kernel jumps.

The paper is organized as follows.  In \S~\ref{hco.0}, we review the definition and main properties of hyperbolic cusp operators.  In \S~\ref{bc.0}, we explain the passage from a punctured Riemann surface to
a compact Riemann surface with boundary.  In \S~\ref{rr.0}, we describe how the $\db$-operator on
a punctured Riemann surface can be seen as a Dirac type hyperbolic cusp operator.  We also check
that Vaillant's formula~\eqref{int.5} agrees with the Riemann-Roch theorem in this case.  In
\S~\ref{ts.0} and \S~\ref{con.0}, we make a quick review of Teichm\"uller theory from our perspective.  We then
obtain our main result in \S~\ref{lf.0} by computing the eta forms appearing in the family
index formula of \cite{Albin-Rochon}. We also compare our formula with
the Grothendieck-Riemann-Roch theorem.  In \S~\ref{Selberg.0}, we study the determinant of various Laplacians on Riemann surfaces of finite area and relate them to Selberg's zeta function
following \cite{BJP}.  Finally, in \S~\ref{qc.0}, we adapt the standard computation of 
the curvature of the Quillen connection to our 
context and compare our result with those of Takhtajan-Zograf \cite{TZ},
Weng \cite{Weng} and Wolpert \cite{Wolpert2}.  

\begin{acknowledgement}
We would like to thank Leon Takhtajan and Peter Zograf for explaining to us their 
results.  We are also grateful to Rafe Mazzeo, Richard Melrose, Gabriele Mondello and Sergiu Moroianu for helpful conversations.
\end{acknowledgement}

\numberwithin{equation}{section}
\numberwithin{corollary}{section}

\section{Hyperbolic cusp operators}\label{hco.0}

Let $X$ be a smooth compact manifold with boundary $\pa X\ne \emptyset$.  Let $x\in\CI(X)$ be a 
boundary defining function, that is, $x$ is a positive function in the interior vanishing on
the boundary such that its differential $dx$ is nowhere zero on $\pa X$.  For $\epsilon >0$ sufficiently small, there is an induced collar neighborhood of $\pa X$ in $X$,
\begin{equation}
   c:\pa X\times [0,\epsilon)_{x}\to N_{\epsilon}:= \{p\in X\quad | \quad x(p)<\epsilon\}
   \subset X.
\label{hco.1}\end{equation}
Consider a Riemannian metric $g_{\hc}$ in the interior $X\setminus \pa X$ taking the form
\begin{equation}
    c^{*}g_{\hc}= \frac{dx^{2}}{x^{2}}+ x^{2}\pi_{L}^{*}g_{\pa X}
\label{hco.2}\end{equation}
in the collar neighborhood \eqref{hco.1}, where $g_{\pa X}$ is a Riemannian metric
on $\pa X$ and $\pi_{L}:\pa X\times [0,\epsilon)_{x}\to \pa X$  is the projection on the left factor.  Such a metric is called a \textbf{ product hyperbolic cusp metric}
(or product d-metric in the terminology of Vaillant \cite{Vaillant}).
This is a complete metric on the interior of $X$, hence the boundary $\pa X$ is at infinity.
Notice however that the volume of $X$ is finite with respect to the metric $g_{\hc}$.  Following the philosophy of Melrose, one can get operators that are adapted to this geometry at
infinity by considering the space of \textbf{hyperbolic cusp vector fields} $\cV_{\hc}(X)$, that is, the space of smooth vector fields on $X$ with length uniformly bounded with respect
to the metric $g_{\hc}$,
\begin{multline}
    \cV_{\hc}(X):= \{\xi\in \CI(X;TX) \quad | \quad \exists \, c>0 \;\mbox{such that} \\ 
             g_{\hc}(\xi(p),\xi(p))<c \; \forall \,p\in X\setminus \pa X  \}.
\label{hco.3}\end{multline}
If $z=(z_{1},\ldots,z_{n-1})$ are local coordinates on $\pa X$, then in the collar neighborhood
\eqref{hco.1}, a hyperbolic cusp vector field $\xi$ takes the form
\begin{equation}
  \xi= a x\frac{\pa}{\pa x} + \sum_{i=1}^{n-1}\frac{b_{i}}{x}\frac{\pa}{\pa z_{i}}
\label{hco.4}\end{equation}
where $a, b_{1},\ldots,b_{n-1}$ are smooth functions on $X$.  It is possible to define 
a vector bundle ${}^{\hc}TX$ on $X$ in such a way that its space of smooth sections
is canonically identified with hyperbolic cusp vector fields,
\begin{equation}
    \CI(X;{}^{\hc}TX)= \cV_{\hc}(X).
\label{hco.5}\end{equation}
In the interior $X\setminus \pa X$, the vector bundle ${}^{\hc}TX$ is isomorphic to the tangent bundle $TX$.  This identification does not extend to an isomorphism on the boundary of $X$.
The metric $g_{\hc}$ naturally induces a metric on ${}^{\hc}TX$ which is also well-defined
on the boundary.  

A quick check indicates that $\cV_{\hc}(X)$ is not closed under the Lie bracket.  To define higher order hyperbolic cusp operators, it is convenient to consider the conformally related metric
\begin{equation}
  g_{\cusp}:= \frac{1}{x^{2}} g_{\hc}.  
\label{hco.6}\end{equation}
The metric $g_{\cusp}$ is called a \textbf{product cusp metric}.  One can consider the corresponding \textbf{cusp vector fields}
\begin{multline}
    \cV_{\cusp}(X):= x\cV_{\hc}(X)=\{\xi\in \CI(X;TX) \quad | \quad \exists \, c>0 \;\mbox{such that} \\ 
             g_{\cusp}(\xi(p),\xi(p))<c \; \forall \,p\in X\setminus \pa X  \}.
\label{hco.7}\end{multline}
Alternatively, one can define cusp vector fields by 
\begin{equation}
  \cV_{\cusp}(X):= \{ \xi\in \CI(X;TX) \quad | \quad \xi x\in x^{2}\CI(X)\},
\label{hco.7b}\end{equation}
which makes it clear that the definition only depends on the choice of boundary defining
function $x$ and not on the choice of metric $g_{\cusp}$.
There is also an associated vector bundle ${}^{\cusp}TX$ over $X$ whose space of smooth 
sections is canonically identified with the space of cusp vector fields,
\begin{equation}
                \CI(X;{}^{\cusp}TX)= \cV_{\cusp}(X).
\label{hco.8}\end{equation}
In the collar neighborhood \eqref{hco.1}, a cusp vector field $\xi$ has to be of the form
\begin{equation}
   \xi= a x^{2}\frac{\pa}{\pa x}+ \sum_{i=1}^{n-1} b_{i}\frac{\pa}{\pa z_{i}}
\label{hco.9}\end{equation}
with $a, b_{1},\ldots,b_{n-1}\in \CI(X)$.  As opposed to $\cV_{\hc}(X)$, the space
$\cV_{\cusp}(X)$ is closed under the Lie bracket, so that it is naturally a Lie algebra.  Its corresponding universal enveloping algebra is the space $\Diff^{*}_{\cusp}(X)$ of 
\textbf{cusp differential operators}.  In the collar neighborhood \eqref{hco.1}, a cusp
differential operator of order $k$, $P\in \Diff^{k}_{\cusp}(X)$, takes the form
\begin{equation}
   P= \sum_{l+|\alpha|\le k} p_{l,\alpha} \lrpar{x^{2}\frac{\pa}{\pa x}}^{l}
       \lrpar{\frac{\pa}{\pa z}}^{\alpha}, \quad p_{l,\alpha}\in \CI(X).
\label{hco.10}\end{equation}
More generally, Mazzeo and Melrose in \cite{MazzeoMelrose} defined the space
of \textbf{cusp pseudodifferential operators of order k}, $\Psi^{k}_{\cusp}(X)$.  These operators
are closed under composition,
\begin{equation}
   \Psi^{k}_{\cusp}(X)\circ \Psi^{l}_{\cusp}(X)\subset \Psi^{k+l}_{\cusp}(X).
\label{hco.11}\end{equation}
There is a corresponding \textbf{cusp Sobolev space of order} $m\in \bbN_{0}$, 
\begin{equation}
  H^{m}_{\cusp}(X):= \{ f\in L^{2}_{g_{\cusp}}(X)\, | \,
                  Pf\in L^{2}_{g_{\cusp}}(X) \; \forall \ P\in \Psi^{m}_{\cusp}(X)\}.
\label{hco.12}\end{equation}
One can also consider its weighted version $x^{k} H^{m}_{\cusp}(X)$ by some power
$x^{k}$ of the boundary defining function.  A cusp pseudodifferential operator 
$P\in \Psi^{m}_{\cusp}(X)$ then defines a bounded linear map
\begin{equation}
   P: x^{k}H^{l}_{\cusp}(X)\to x^{k}H^{l-m}_{\cusp}(X).
\label{hco.13}\end{equation} 
One interesting feature of the cusp operators is that if a cusp pseudodifferential $P\in \Psi^{m}(X)$ is invertible as a bounded linear map \eqref{hco.13}, then its inverse
is given by a cusp operator of order $-m$.  

Generalizing the relation $\cV_{\hc}(X)= \frac{1}{x} \cV_{\cusp}(X)$, one can define 
the space of \textbf{hyperbolic cusp pseudodifferential operators of order $m$}
by 
\begin{equation}
    \Psi^{m}_{\hc}(X):= x^{-m}\Psi^{m}_{\cusp}(X).
\label{hco.14}\end{equation}   
A hyperbolic cusp operator $P\in \Psi^{m}_{\hc}(X)$ naturally induces a bounded linear map
\begin{equation}
   P: x^{k}H^{l}_{\cusp}(X)\to x^{k-m}H^{l-m}_{\cusp}(X).
\label{hco.15}\end{equation}

So far we have considered operators acting on functions on $X$, but if $E\to X$ and $F\to X$ are
complex vector bundles on $X$, it is no more difficult to define the space of 
hyperbolic cusp operators $\Psi^{*}_{\hc}(X;E,F)$ acting from sections of $E$ to sections
of $F$.  

In \cite{MazzeoMelrose}, Mazzeo and Melrose gave a very elegant criterion to determine
when a cusp operator is Fredholm.  They first introduced a notion of principal symbol adapted
to the geometry at infinity, that is, involving the cosphere bundle $S^{*}({}^{\cusp}TX)$ of
${}^{\cusp}TX$,
\begin{equation}
   \sigma_{k}: \Psi^{k}_{\cusp}(X;E,F)\to \CI(S^{*}({}^{\cusp}TX); \hom(\pi^{*}E,\pi^{*}F))
\label{fc.1}\end{equation}
where $\pi: S^{*}{}^{\cusp}TX \to X$ is the bundle projection.  A cusp operator 
$A\in \Psi^{k}_{\cusp}(X;E,F)$ is said to be \textbf{elliptic} if its principal symbol 
$\sigma_{k}(A)$ is invertible.  In that case, by a standard construction, one can obtain
a parametrix $B\in \Psi^{k}_{\cusp}(X;F,E)$ such that 
\begin{equation}
  BA-\Id_{E}\in \Psi^{-\infty}_{\cusp}(X;E), \quad AB-\Id_{F}\in \Psi^{-\infty}_{\cusp}(X;F).
\label{fc.2}\end{equation}
However, since elements of $\Psi^{-\infty}_{\cusp}(X;E)$ are not compact in general, this does
not insure that the operator $A$ is Fredholm.  One needs some extra decay at infinity for the
error term to be compact.  Precisely, the subset of compact operators in $\Psi^{-\infty}(X;E)$ is
given by $x\Psi^{-\infty}(X;E)$.  It is possible to insure the error term is in that subset
provided $A$ is `invertible at infinity'.  This condition is determined by the normal operator
map
\begin{equation}
   N: \Psi^{k}_{\cusp}(X;E,F)\to \Psi^{k}_{\sus}(\pa X;E,F)
\label{fc.3}\end{equation}
where $\Psi^{k}_{\sus}(\pa X;E,F)$ is the space of suspended operators of order $k$ introduced
by Melrose in \cite{Melrose_eta}.  These are operators on $\pa X\times \bbR$ which are translation invariant in the $\bbR$ direction.  Essentially, the normal operator $N(A)$ of $A$
is its asymptotically translation invariant part at infinity.  The criterion of Mazzeo and Melrose can now be stated as follows.
\begin{proposition}[Mazzeo-Melrose]
A cusp operator $A\in \Psi^{k}_{\cusp}(X;E,F)$ is Fredholm if and only if it is elliptic and its
normal operator $N(A)$ is invertible.
\label{fc.4}\end{proposition}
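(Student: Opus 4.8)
The plan is to run the standard parametrix construction, now inside the cusp calculus, and to establish the converse by Weyl sequences; the content of the statement is that there are exactly two obstructions to Fredholmness --- a symbolic one, local to the interior, and the invertibility of the translation-invariant model operator $N(A)$ at the boundary. Throughout I would use that the cusp calculus is closed under composition \eqref{hco.11} and under asymptotic summation, that $x\Psi^{-\infty}_{\cusp}(X;E)\subset\Psi^{-\infty}_{\cusp}(X;E)$ is precisely the ideal of compact operators on $L^{2}_{g_{\cusp}}(X;E)$, and that the normal operator map \eqref{fc.3} is multiplicative, $*$-compatible, and fits into the short exact sequence $0\to x\Psi^{k}_{\cusp}(X;E,F)\to\Psi^{k}_{\cusp}(X;E,F)\xrightarrow{\,N\,}\Psi^{k}_{\sus}(\pa X;E,F)\to 0$.

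For the ``if'' direction, assume $A$ is elliptic with $N(A)$ invertible. Since $\sigma_{k}(A)$ is invertible, a symbol inverse lifts to $B_{0}\in\Psi^{-k}_{\cusp}(X;F,E)$ with $AB_{0}-\Id_{F}\in\Psi^{-1}_{\cusp}(X;F)$, and asymptotically summing the resulting Neumann series within the calculus produces $B_{1}\in\Psi^{-k}_{\cusp}(X;F,E)$ with $R:=AB_{1}-\Id_{F}\in\Psi^{-\infty}_{\cusp}(X;F)$ (and a left parametrix agreeing with $B_{1}$ modulo $\Psi^{-\infty}_{\cusp}$). The error $R$ need not be compact, so I would correct it using $N$: here $N(A)$ is invertible \emph{within} the suspended calculus, $N(A)^{-1}\in\Psi^{-k}_{\sus}(\pa X;F,E)$, by the analogue of the ``closed under inversion'' property for the translation-invariant calculus on $\pa X\times\bbR$; then $S:=-N(A)^{-1}N(R)\in\Psi^{-\infty}_{\sus}(\pa X;F,E)$, and by exactness $S$ lifts to $\widetilde S\in\Psi^{-\infty}_{\cusp}(X;F,E)$ with $N(\widetilde S)=S$. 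Setting $B:=B_{1}+\widetilde S$ gives $AB-\Id_{F}=R+A\widetilde S\in\Psi^{-\infty}_{\cusp}(X;F)$ with $N(AB-\Id_{F})=N(R)+N(A)S=0$, so $AB-\Id_{F}\in x\Psi^{-\infty}_{\cusp}(X;F)$ is compact; the mirror construction on the left gives $B'$ with $B'A-\Id_{E}$ compact, so $A$ is Fredholm.

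For the ``only if'' direction, suppose $A$ is Fredholm; then $\norm{u}\le C\norm{Au}+\norm{Ku}$ for some compact $K$ (in the appropriate weighted Sobolev norms), and similarly for $A^{*}$. If $A$ were not elliptic I would concentrate coherent states at an interior covector $\zeta\in S^{*}({}^{\cusp}TX)$ with $\sigma_{k}(A)(\zeta)$ non-injective to build a normalized, weakly null sequence $u_{j}$ with $\norm{Au_{j}}\to0$, contradicting the estimate; hence $A$ is elliptic. For $N(A)$, Fourier transforming the $\bbR$-variable replaces $N(A)$ by its indicial family $\xi\mapsto\hat{N(A)}(\xi)$ of operators on $\pa X$, which ellipticity of $\sigma_{k}(A)$ already makes invertible for $\abs\xi$ large; so it suffices to exclude a real $\xi_{0}$ with $\hat{N(A)}(\xi_{0})$ non-invertible. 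If $\hat{N(A)}(\xi_{0})$ is not injective, take an approximate null section $v$ on $\pa X$, write $t=1/x$ in the collar (so that $g_{\cusp}$ is asymptotically the product $dt^{2}+g_{\pa X}$) and set $u_{j}(t,z)=\chi(t-j)e^{i\xi_{0}t}v(z)$ for a fixed bump $\chi$; these are normalized, weakly null, supported in $\{x<\delta_{j}\}$ with $\delta_{j}\to0$, and $Au_{j}$ equals the action of the $t$-translation-invariant model $N(A)$ on $u_{j}$ up to a term from $x\Psi^{k}_{\cusp}$, which is $o(1)$ because of the extra factor of $x=e^{-t}$ on the support of $u_{j}$, while the model term itself is $O(\norm{\hat{N(A)}(\xi_{0})v})+o(1)$ and hence can be made arbitrarily small --- contradicting the estimate. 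If instead $\hat{N(A)}(\xi_{0})$ is not surjective, the same argument applied to $A^{*}$, using $N(A^{*})=N(A)^{*}$, gives a contradiction. Thus $\hat{N(A)}(\xi)$ is invertible for all real $\xi$, \ie $N(A)$ is invertible.

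I expect the main obstacle to be the boundary Weyl sequence estimate in the necessity part: verifying that $A$ --- which is only \emph{asymptotically} translation invariant --- applied to the far-translated quasimode reproduces $N(A)$ acting on it up to a genuine $o(1)$ error in $L^{2}_{g_{\cusp}}$, which is exactly where the one-order gain of $x$ in $\Psi^{k}_{\cusp}/x\Psi^{k}_{\cusp}\cong\Psi^{k}_{\sus}$ is used. The second non-formal input, needed in the ``if'' direction, is that an invertible suspended operator has its inverse again in the suspended calculus; this is the same theorem one dimension down on $\pa X\times\bbR$, where the only obstruction is the (elliptic) symbol and invertibility of the indicial family is the hypothesis. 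All of this is due to Mazzeo and Melrose \cite{MazzeoMelrose}, to whom one could alternatively simply refer.
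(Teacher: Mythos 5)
The paper does not actually prove this proposition: it states it and refers to Mazzeo--Melrose \cite{MazzeoMelrose}, having only set up the surrounding machinery (the symbolic parametrix \eqref{fc.2}, the fact that the compact smoothing operators are exactly $x\Psi^{-\infty}_{\cusp}$, and the normal operator map \eqref{fc.3}). Your proposal fills in the standard argument from the literature, and it is essentially sound: the sufficiency direction via a symbolic parametrix corrected by lifting $-N(A)^{-1}N(R)$ through the short exact sequence $0\to x\Psi^{k}_{\cusp}\to\Psi^{k}_{\cusp}\to\Psi^{k}_{\sus}\to 0$ is exactly how one proceeds, and you correctly isolate the two genuine inputs (compactness of $x\Psi^{-\infty}_{\cusp}$ and closure of the suspended calculus under inversion, the latter from \cite{Melrose_eta}); the necessity direction via interior coherent states and translated boundary quasimodes is the standard converse. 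One small correction: in the cusp calculus the cylindrical variable is $t=1/x$ (so that $x^{2}\pa_{x}=-\pa_{t}$ and $g_{\cusp}=dt^{2}+g_{\pa X}$ as you say), hence $x=1/t$ on the support of $u_{j}$, not $e^{-t}$; the latter is the b-calculus normalization $t=-\log x$. This does not affect the argument, since the error from $x\Psi^{k}_{\cusp}$ acting on a quasimode supported near $t\approx j$ is then $O(1/j)$, still $o(1)$. Given that the result is quoted rather than proved in the paper, simply citing \cite{MazzeoMelrose}, as you note at the end, is the route the authors take.
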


For hyperbolic cusp operators, the situation is much more delicate.  For simplicity, let us restrict to a first order hyperbolic cusp \textbf{differential} operator $\eth_{\hc}\in
\Psi^{1}_{\hc}(X;E,F)$.  Then $x\eth_{\hc}\in \Psi^{1}_{\cusp}(X;E,F)$ is a cusp operator and we
can use proposition~\ref{fc.4} to determine whether or not $x\eth_{\hc}$ is Fredholm.  If it is
Fredholm, then it is not hard to see that $\eth_{\hc}$ is Fredholm as well.  In fact, in that case, the spectrum of $\eth_{\hc}$ is then necessarily discrete since its parametrix in
$x\Psi^{-1}_{\cusp}(X;F,E)$ is a compact operator.  

However, even if $x\eth_{\hc}$ is not Fredholm, it is still possible for $\eth_{\hc}$ to 
be Fredholm.  Define the \textbf{vertical family} of $\eth_{\hc}$ to be
\begin{equation}
   \eth_{\hc}^{V}:= \left.\lrpar{ x\eth_{\hc}}\right|_{\pa X}\in \Psi^{1}(\pa X;E,F).
\label{fc.5}\end{equation}
When $\eth_{\hc}$ is a self-adjoint Dirac type operator with $E=F$ a Clifford bundle, the
vertical family $\eth_{\hc}^{V}$ is invertible if and only if the normal operator 
$N(x\eth_{\hc})$ is invertible.  In his thesis \cite{Vaillant}, Vaillant gave the following 
criterion to determine if a Dirac-type self-adjoint operator $\eth_{\hc}$ is Fredholm.
The vertical family does not have to be invertible, but if it is not, Vaillant defined another
operator $\eth^{H}_{\hc}$ acting on the finite dimensional vector space $\mathcal{K}:= \ker \eth^{V}_{\hc}$ and called the \textbf{horizontal family}.  If $\Pi_{0}$ denotes the projection
from $L^{2}(\pa X;E)$ onto $\mathcal{K}$, then the horizontal family is defined by extending an element $\xi\in \mathcal{K}$ into the interior to an element $\widetilde{\xi}\in
\CI(X;E)$ and then applying $\eth_{\hc}$ and $\Pi_{0}$,
\begin{equation}
    \eth^{H}_{\hc}\xi:= \Pi_{0}\lrpar{ \left.\eth_{\hc} \widetilde{\xi}\right|_{\pa X}}.
\label{fc.6}\end{equation} 
In his thesis \cite{Vaillant}, Vaillant gave the following criterion.
\begin{proposition}[Vaillant \cite{Vaillant}, \S3]
A Dirac type self-adjoint operator $\eth_{\hc} \in \Psi^{1}_{\hc}(X;E)$ is Fredholm if and only
if $\eth^{H}_{\hc}$ is invertible.  Moreover, the continuous spectrum of $\eth_{\hc}$ is governed
by $\eth^{H}_{\hc}$ with bands of continuous spectrum starting at the eigenvalues of $\eth^{H}_{\hc}$ and going to infinity.
\label{FV}\end{proposition}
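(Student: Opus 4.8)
The plan is to analyse $\eth_{\hc}$ near the boundary through its conformal rescaling $x\eth_{\hc}\in\Psi^{1}_{\cusp}(X;E)$, reducing the Fredholm question to an explicit model on the end.  In the collar \eqref{hco.1}, after the substitution $s=1/x$ (in which $g_{\cusp}=\frac{dx^{2}}{x^{4}}+g_{\pa X}$ becomes the exact product cylinder $ds^{2}+g_{\pa X}$), the cusp operator $x\eth_{\hc}$ can be written in the standard cylindrical normal form
\begin{equation*}
   x\eth_{\hc}=\gamma\lrpar{\pa_{s}+\eth^{V}_{\hc}}+xA,\qquad A\in\Psi^{1}_{\cusp}(X;E),
\end{equation*}
with $\gamma$ Clifford multiplication by the unit conormal ($\gamma^{2}=-1$, $\gamma^{*}=-\gamma$, $\gamma\eth^{V}_{\hc}=-\eth^{V}_{\hc}\gamma$); because the metric is an exact product there the remainder is genuinely of lower order, and therefore
\begin{equation*}
   \eth_{\hc}=x^{-1}\lrpar{x\eth_{\hc}}=s\gamma\pa_{s}+s\gamma\eth^{V}_{\hc}+A .
\end{equation*}
Since Fredholmness and the essential spectrum of $\eth_{\hc}$ acting on $L^{2}(X,g_{\hc})$ are unaffected by relatively compact perturbations and depend only on the behaviour of $\eth_{\hc}$ in an arbitrarily small neighbourhood of $\pa X$ (on relatively compact subsets of the interior $\eth_{\hc}$ is elliptic and the relevant Sobolev embedding is compact), it suffices to analyse this model; to do so I first conjugate by the unitary that flattens the volume density of $g_{\hc}$ and pass to the variable $t=\log s$, in which $s\pa_{s}=\pa_{t}$.

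Next I separate variables with respect to the orthogonal splitting $L^{2}(\pa X;E)=\curly{K}\oplus\curly{K}^{\perp}$, where $\curly{K}=\ker\eth^{V}_{\hc}$ is finite dimensional and $|\eth^{V}_{\hc}|\ge\delta>0$ on $\curly{K}^{\perp}$.  On the $\curly{K}^{\perp}$-sector the term $s\gamma\eth^{V}_{\hc}$ contributes a confining potential $\gtrsim\delta^{2}e^{2t}\to\infty$, so the model there has compact resolvent: it carries only discrete spectrum and never obstructs Fredholmness.  On the $\curly{K}$-sector this growing term vanishes, and the crux of the argument — and the step I expect to be the main obstacle — is to show that, after the volume renormalization and a Schur-complement reduction absorbing the coupling to the (invertible and fast-growing) $\curly{K}^{\perp}$-sector into a relatively compact error, the $\curly{K}$-part of $\eth_{\hc}$ is unitarily equivalent near $\pa X$ to the translation invariant model
\begin{equation*}
   \eth^{\mathrm{mod}}=\gamma\lrpar{\pa_{t}+\eth^{H}_{\hc}}\quad\text{on}\quad L^{2}\bigl([T,\infty)_{t};\curly{K}\bigr).
\end{equation*}
Concretely one must check that the compression to $\curly{K}$ of the zeroth order part of $\eth_{\hc}$ is exactly the horizontal operator \eqref{fc.6} and that everything else is $O(x)$, hence relatively compact; this is the computation carried out in \cite{Vaillant}, using the structure of $\eth^{V}_{\hc}$ near its kernel together with the independence of \eqref{fc.6} on the chosen extension.

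Finally I analyse $\eth^{\mathrm{mod}}$.  Its Fourier transform in $t$ is the self-adjoint family $\gamma(\sqrt{-1}\,\tau+\eth^{H}_{\hc})$, $\tau\in\bbR$, with square $\tau^{2}+(\eth^{H}_{\hc})^{2}$, so it is invertible for every real $\tau$ precisely when $\eth^{H}_{\hc}$ is invertible.  When $\eth^{H}_{\hc}$ is invertible, the usual Fourier/cylindrical construction on the end (the mechanism underlying Proposition~\ref{fc.4}) yields a parametrix for $\eth_{\hc}$ with relatively compact error, so $\eth_{\hc}$ is Fredholm.  When $\ker\eth^{H}_{\hc}\ne0$, one transplants to $X$ the $t$-independent sections of $\eth^{\mathrm{mod}}$ valued in $\ker\eth^{H}_{\hc}$, truncated far out on the cylinder, to produce a singular Weyl sequence for $\eth_{\hc}$ at $0$; hence $0$ belongs to the essential spectrum of $\eth_{\hc}$ and $\eth_{\hc}$ is not Fredholm.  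Running the same Weyl-sequence construction at the remaining points of the spectrum of $\eth^{\mathrm{mod}}$, namely $\pm\sqrt{\tau^{2}+\mu^{2}}$ for $\tau\in\bbR$ and $\mu$ an eigenvalue of $\eth^{H}_{\hc}$, exhibits the bands of continuous spectrum emanating from the eigenvalues of $\eth^{H}_{\hc}$ and running out to infinity, which is the final assertion.
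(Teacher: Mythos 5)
The paper does not actually prove Proposition~\ref{FV}: it is quoted from Vaillant's thesis and used as a black box, so there is no in-text argument to compare yours against. That said, your outline is a faithful reconstruction of the strategy of the cited reference: separate variables along $\mathcal{K}\oplus\mathcal{K}^{\perp}$ with $\mathcal{K}=\ker\eth^{V}_{\hc}$, observe that on $\mathcal{K}^{\perp}$ the term $s\gamma\eth^{V}_{\hc}$ acts as a confining potential producing purely discrete spectrum there, reduce the $\mathcal{K}$-sector to the translation-invariant cylinder operator $\gamma(\pa_{t}+\eth^{H}_{\hc})$, and read off Fredholmness and the bands $\pm\sqrt{\tau^{2}+\mu^{2}}$ by Fourier transform. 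The conclusions you draw from the model (invertibility of $\gamma(\sqrt{-1}\,\tau+\eth^{H}_{\hc})$ for all real $\tau$ precisely when $\eth^{H}_{\hc}$ is invertible; Weyl sequences at points of the bands) are correct.

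As a self-contained proof, however, it has two gaps, one of which you acknowledge. First, essentially all of the analytic content sits in the step you flag as ``the main obstacle'': that after the volume conjugation and the Schur-complement elimination of the coupling to $\mathcal{K}^{\perp}$, the compression of $\eth_{\hc}$ to the $\mathcal{K}$-sector is $\gamma(\pa_{t}+\eth^{H}_{\hc})$ modulo a \emph{relatively compact} error. This is not a matter of order counting: the remainder $A$ in your normal form is better by one power of $x$ but not in differential order, so it is not a priori a relatively compact perturbation, and the off-diagonal blocks $\Pi_{0}A(1-\Pi_{0})$ must be controlled using the $O(1/s)$ decay of the resolvent of the $\mathcal{K}^{\perp}$-block. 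That computation (which is also where the zeroth-order curvature contributions making $\eth^{H}_{\hc}$ nonzero enter, cf.\ Proposition~\ref{fp.2}) is exactly what you cite from \cite{Vaillant} rather than perform. Second, for the last assertion your Weyl sequences only prove that the bands are \emph{contained} in the essential spectrum; to conclude that the continuous spectrum is exactly the union of the bands you must also run the parametrix construction for $\eth_{\hc}-\lambda$ at every $\lambda$ off the bands, not only at $\lambda=0$. Both points are repairable along the lines you indicate, but they are where the actual work lies.
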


\section{The boundary compactification of a Riemann surface with puncture}
\label{bc.0}

Let $\Sigma$ be a Riemann surface of type $(g,n)$, that is,
$\Sigma= \overline{\Sigma}\setminus\{x_{1},\ldots,x_{n}\}$ where
$\overline{\Sigma}$ is a compact Riemann surface of genus $g$ and 
$x_{1},\ldots,x_{n}$ are pairwise distinct points on $\overline{\Sigma}$.
We will assume that $2g+n\ge 3$.  The surface $\overline{\Sigma}$ is a compactification of $\Sigma$.  An 
alternative way of compactifying the Riemann surface $\Sigma$ is to consider
the radial blow up $\Sigma_{b}$ of $\overline{\Sigma}$ at the points
$\{x_{1},\ldots,x_{n}\}$ with blow-down map
\begin{equation}
   \beta: \Sigma_{b}\to \overline{\Sigma}.
\label{bc.1}\end{equation}
This gives a compactification of $\Sigma$ in which each puncture is replaced by a circular boundary.  
The Riemann surface with boundary $\Sigma_{b}$ also
comes equipped with a natural choice of boundary defining function
$\rho\in \CI(\Sigma_{b})$ as we will see.  This choice is dictated by the
uniformization theorem for Riemann surfaces.  

Recall that, by the uniformization theorem, there is a canonical
hyperbolic metric $g_{\Sigma}$ on $\Sigma$ obtained by taking the 
unique metric of constant scalar curvature equal to $-1$ in the conformal 
class defined by the complex structure of $\Sigma$.  Consider the upper-half
plane 
\begin{equation}
\bbH= \{ x+iy \in \bbC \; | \; y>0\}
\label{bc.2}\end{equation}
equipped with the Poincar\'e metric
\begin{equation}
   g_{\bbH}:= \frac{dx^{2}+ dy^{2}}{y^{2}}.
\label{bc.3}\end{equation}
Let $\Gamma_{\infty}$ be the discrete Abelian group generated by the 
parabolic isometry $z\mapsto z+1$.  The \emph{horn} is the quotient
\begin{equation}
   H:= \Gamma_{\infty} \setminus \bbH.
\label{bc.4}\end{equation}
Via the change of variable $r=\frac{1}{y}$, one sees that 
the horn is isometric to 
$(0,+\infty)_{r}\times \bbR/\bbZ$ equipped with the metric
\begin{equation}
    \frac{dr}{r^{2}}+ r^{2}dx^{2}.
\label{bc.5}\end{equation}
A \emph{cusp end} is a subspace of $H$ of the form $(0,a]\times
\bbR/\bbZ$.
Near a puncture $x_{i}$ of $\Sigma$, the geometry of $(\Sigma,g_{\Sigma})$
is modeled on a cusp end.  That is, around each puncture $x_{i}$, there 
exists a neighborhood $N_{i}\subset \Sigma$ and an isometry
\begin{equation}
  \varphi_{i}: N_{i}\to C_{i}
\label{bc.6}\end{equation}
with a cusp end $C_{i}=(0,\frac{1}{y_{i}}]\times \bbR/\bbZ$.
Each cusp end has a natural compactification
\begin{equation}
	\overline{C}_{i}= \left[ 0,\frac{1}{y_{i}} \right]_{r_{i}}\times \bbR/\bbZ
\label{bc.7}\end{equation}      
where the coordinate $r_{i}$ can be seen as a boundary defining function
for the boundary $\{0\}\times \bbR/\bbZ\subset \overline{C}_{i}$.  
This boundary defining function can in fact be defined intrinsically in
terms of the hyperbolic metric \eqref{bc.5}.  Indeed, we define a
\emph{horocycle} to be an embedded circle in a cusp end which is 
perpendicular to all geodesics emanating from the cusp.  This definition is 
formulated purely in terms of the metric.  On the other hand, as one can check,
the horocycles are precisely given by the level sets of the function $r_{i}$.  
Moreover, the value of the function $r_{i}$ on a horocycle 
$\gamma= \{u\}\times \bbR/\bbZ$ is also determined
by the hyperbolic metric.  It is the area of the smaller cusp end
$(0,u)\times \bbR/\bbZ$, namely 
\begin{equation}
   r_{i}(u,v)= \operatorname{area}( (0,u)\times \bbR/\bbZ))=
\int_{0}^{u}\int_{\bbR/\bbZ} dr dx= u.
\label{bc.8}\end{equation}
Thus, intuitively, the boundary defining function $r_{i}$ is the 
`area function' for the cusp end $C_{i}$.  The compactification
$\overline{C}_{i}$ induces a corresponding compactification 
$\overline{N}_{i}$ via the isometry \eqref{bc.6}, and thus a compactification
$\Sigma_{\hc}$ of $\Sigma$ into a compact surface with 
boundary naturally diffeomorphic to $\Sigma_{b}$.  To get
a global boundary defining function, choose 
a smooth non-decreasing function $\chi\in \CI([0,+\infty))$ such that 
\begin{equation}
   \chi(x):= \left\{ \begin{array}{ll}
                    x, & \mbox{if}\; 0\le x\le \frac{1}{2}; \\
                    1, & \mbox{if} \; x\ge 1,
\end{array}
\right. 
\label{bc.9}\end{equation}
and consider
 $\chi_{\epsilon}(x):= \epsilon \chi(\frac{x}{\epsilon})$ for 
$0<\epsilon< \min\{\frac{1}{y_{1}},\ldots,\frac{1}{y_{n}}\}$.
On each (compactified) cusp end $\overline{C}_{i}$, consider
the function $\chi_{\epsilon}(r_{i})$.  Then the function 
\begin{equation}
 \rho_{\Sigma,\epsilon}(\sigma):= \left\{ \begin{array}{ll}
                    \varphi_{i}^{*}(\chi_{\epsilon}\circ r_{i})(\sigma), & \mbox{if}\;
 \sigma\in \overline{N}_{i}, \quad i\in\{1,\ldots,n\}; \\
                    \epsilon, & \mbox{otherwise};
\end{array}
\right. 
\label{bc.10}\end{equation} 
is a boundary defining function for $\pa\Sigma_{\hc}$ in
$\Sigma_{\hc}$.  Since the choice of the number $\epsilon$ is 
not of primary importance, we will usually denote the function
$\rho_{\Sigma,\epsilon}$ simply by $\rho_{\Sigma}$.
With respect to this boundary defining function,
the hyperbolic metric $g_{\Sigma}$ is a product hyperbolic metric.
That is, in the coordinates $(x,\rho_{\Sigma})$ on $N_{i}$, it is of the form
\begin{equation}
g_{\Sigma}= \frac{d\rho_{\Sigma}^{2}}{\rho_{\Sigma}^{2}}+ 
\rho_{\Sigma}^{2}dx^{2}
\label{bc.11}\end{equation}
near the boundary.

\section{The $\db$-operator as a Dirac-type $\hc$-operator}\label{rr.0}

Let $K:= \Lambda_{\Sigma}^{1,0}$ denote the canonical line bundle on
$\Sigma$.  This line bundle and all of its tensor powers $K^{\ell}$ have natural holomorphic structures.
In particular, for each $\ell\in \bbZ$, there is a well-defined
$\db$ operator
\begin{equation}
  \db_{\ell}: \CI(\Sigma; K^{\ell})\to \CI(\Sigma; \Lambda^{0,1}_{\Sigma}\otimes K^{\ell}),
\label{bc.12}\end{equation}
where $\Lambda^{0,1}_{\Sigma}\to \Sigma$ is the bundle of $(0,1)$-forms
on $\Sigma$.  In a cusp end $C_{i}$ where the canonical line bundle is 
trivialized by the holomorphic section $dz$, it takes the form
\begin{equation}
\begin{aligned}
d\overline{z} \frac{\pa}{\pa \overline{z}}&= (dx-idy)\frac{1}{2}
\left( \frac{\pa}{\pa x} + i\frac{\pa}{\pa y}\right)  \\
     &=  (dx+ i\frac{dr}{r^{2}})\frac{1}{2}
   \left( \frac{\pa}{\pa x}- ir^{2}\frac{\pa}{\pa r}\right), \quad r= \frac{1}{y},\\
 &= \frac{1}{2}\left( r dx+ i \frac{dr}{r} \right) 
   \left( \frac{1}{r}\frac{\pa}{\pa x} -ir\frac{\pa}{\pa r} \right).
\end{aligned}
\label{bc.13}\end{equation}
Thus, near the boundary $\pa \Sigma_{\hc}$, the $\db$-operator is of the form
\begin{equation}
\db=\frac{1}{2}\left( \rho_{\Sigma} dx+ i \frac{d\rho_{\Sigma}}{\rho_{\Sigma}} \right) 
   \left( \frac{1}{\rho_{\Sigma}}\frac{\pa}{\pa x} -i\rho_{\Sigma}\frac{\pa}{\pa \rho_{\Sigma}} \right).
\label{bc.14}\end{equation}
Since $\frac{1}{\rho_{\Sigma}}\frac{\pa}{\pa x} -i \rho_{\Sigma}\frac{\pa}{\pa\rho_{\Sigma}}$
is a $\hc$-operator and 
$\frac{1}{2}(\rho_{\Sigma} dx+ i\frac{d\rho_{\Sigma}}{\rho_{\Sigma}})$ is naturally a section of ${}^{\hc}T^{*}\Sigma\otimes_{\bbR}\bbC$, we see that the $\db_{\ell}$-operator naturally
extends to give a $\hc$-operator 
\begin{equation}
 \db_{\ell}: \CI(\Sigma_{\hc}; {}^{\hc}K^{\ell})\to \frac{1}{\rho_{\Sigma}}\CI(\Sigma_{hc}; 
{}^{\hc}\Lambda^{0,1}_{\Sigma}\otimes {}^{\hc}K^{\ell})
\label{bc.15}\end{equation}
where  ${}^{\hc}\Lambda^{0,1}_{\Sigma}$ is the complex conjugate of ${}^{\hc}K$ and 
${}^{\hc}K\subset {}^{\hc}T^{*}\Sigma\otimes_{\bbR}\bbC $
is such that it is identified with $K$ in the interior of
$\Sigma_{\hc}$ and it is trivialized by the section 
$\rho_{\Sigma}dz=\rho_{\Sigma} dx- i\frac{d\rho_{\Sigma}}{\rho_{\Sigma}}$ 
near each connected component of the boundary.
The metric $g_{\Sigma}$ induces a Hermitian metric on
$K^{\ell}$ and $\Lambda^{0,1}_{\Sigma}$, as well as on
${}^{\hc}K^{\ell}$ and ${}^{\hc}\Lambda^{0,1}_{\Sigma}$.  We denote 
by $\cH_{\ell,i}$ the Hilbert space of square integrable sections
of ${}^{\hc}K^{\ell}\otimes ({}^{\hc}\Lambda^{0,1})^{i}$ with respect
to the natural scalar product
\begin{equation}
\langle f_{1}, f_{2}\rangle_{\cH_{\ell,i}}:=
\int_{\Sigma_{\hc}} \langle f_{1}(\sigma),f_{2}(\sigma)
\rangle_{g_{\Sigma}} 
dg_{\Sigma}(\sigma)
\label{rr.1}\end{equation}
where $dg_{\Sigma}$ is the natural extension of the volume form of $g_{\Sigma}$
on $\Sigma_{\hc}$.

The operator $\db_{\ell}$ is Fredholm.  To 
see this, recall (see for instance proposition 3.67 in \cite{BGV}) that 
\begin{equation}
 D_{\ell}:= \sqrt{2}( \db_{\ell}+ \db^{*}_{\ell})
\label{rr.5}\end{equation}
is a Dirac type operator induced by the Chern
connection on $K^{\ell}$ with Clifford action on
$\nu\in \CI(\Sigma_{\hc};{}^{\hc}\Lambda_{\Sigma})$ given by
\begin{equation}
  c(f)\nu= \sqrt{2}( \varepsilon(f^{0,1}) -\iota(f^{1,0}))\nu, \quad
f\in \CI(\Sigma_{\hc};{}^{\hc}\Lambda_{\Sigma}),
\label{rr.6}\end{equation}
where $\varepsilon(f^{0,1})$ denotes exterior multiplication by $f^{0,1}$.
The operator $D_{\ell}$ is formally self-adjoint.  The vertical family $D^{V}_{\ell}$ of $D_{\ell}$ is given
by 
\begin{equation}
  c(du) \frac{\pa}{\pa u}
\label{fp.1}\end{equation}
acting on $\CI(\bbR/ \bbZ;
    {}^{\hc}K^{\ell}\oplus {}^{\hc}\Lambda^{0,1}_{\Sigma}\otimes {}^{\hc}K^{\ell})$ on each circular boundary component of $\Sigma_{\hc}$, where $u=-x$ is such that 
$\{ \left. \frac{\pa}{\pa u}\right|_{\sigma}\}$ is an oriented orthonormal basis of $T_{\sigma}\pa \Sigma_{\hc}$ for each $\sigma\in \pa \Sigma_{\hc}$.  In particular, $\mathcal{K}= \ker(D^{V}_{\ell})$ is a complex vector space of dimension $2n$.  By proposition~\ref{FV}, we
need to show that the horizontal family $D^{H}_{\ell}: \mathcal{K}\to\mathcal{K}$ is invertible.
\begin{proposition}
On each circular boundary component of $\Sigma_{\hc}$, the 
horizontal family is given by 
\[
    D^{H}_{\ell}= \lrpar{\ell-\frac{1}{2}} ic(du).
\]
\label{fp.2}\end{proposition}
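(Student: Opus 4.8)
The plan is to compute $D^H_\ell$ directly from its definition \eqref{fc.6} by writing everything explicitly in the cusp-end coordinates $(u,\rho_\Sigma)$, where $u=-x$ and the metric is $g_\Sigma = \frac{d\rho_\Sigma^2}{\rho_\Sigma^2} + \rho_\Sigma^2\, du^2$. First I would identify the finite-dimensional space $\mathcal K = \ker D^V_\ell$ on a single circular boundary component. Since $D^V_\ell = c(du)\frac{\pa}{\pa u}$ acts on sections of ${}^{\hc}K^\ell \oplus {}^{\hc}\Lambda^{0,1}_\Sigma\otimes{}^{\hc}K^\ell$ over $\bbR/\bbZ$ and $c(du)$ is invertible, the kernel consists precisely of the $u$-independent (constant) sections; using the trivializations $\rho_\Sigma dz$ of ${}^{\hc}K$ and its conjugate for ${}^{\hc}\Lambda^{0,1}_\Sigma$, this gives a $2$-dimensional space per boundary circle, spanned by the constant section $e_+$ of the ${}^{\hc}K^\ell$ summand and the constant section $e_-$ of the ${}^{\hc}\Lambda^{0,1}_\Sigma\otimes{}^{\hc}K^\ell$ summand (matching the total dimension $2n$ asserted in the text).

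Next I would extend a constant $\xi\in\mathcal K$ to $\wt\xi$ in the interior simply by taking the same trivialized-constant section on the collar (independent of $\rho_\Sigma$ and $u$), apply $D_\ell = \sqrt 2(\db_\ell + \db_\ell^*)$, and then project back onto $\mathcal K$ with $\Pi_0$. The key computation is to rewrite the radial part of $D_\ell$ near the boundary. From \eqref{bc.14}, $\db_\ell$ near $\pa\Sigma_{\hc}$ is $\frac12\bigl(\rho_\Sigma du + i\frac{d\rho_\Sigma}{\rho_\Sigma}\bigr)\bigl(\frac{1}{\rho_\Sigma}\frac{\pa}{\pa u} - i\rho_\Sigma\frac{\pa}{\pa\rho_\Sigma}\bigr)$ acting on ${}^{\hc}K^\ell$, where one must also account for the $\rho_\Sigma$-dependence of the trivializing section $\rho_\Sigma dz$ raised to the power $\ell$: differentiating $\rho_\Sigma^\ell$ contributes a zeroth-order term proportional to $\ell$. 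Concretely, if a section is written $f\cdot(\rho_\Sigma dz)^\ell$ with $f$ scalar, then $\rho_\Sigma\frac{\pa}{\pa\rho_\Sigma}$ hitting it produces $(\rho_\Sigma f' + \ell f)(\rho_\Sigma dz)^\ell$. For $\wt\xi$ with $f$ constant the derivative terms drop and one is left with the $\ell$-term; a parallel computation for $\db_\ell^*$ on the ${}^{\hc}\Lambda^{0,1}$ part yields the complementary constant, and the two combine — after including the Clifford normalization $\sqrt2$ and the factor $i\,c(du)$ coming from how $d\rho_\Sigma/\rho_\Sigma$ and $du$ enter — to give the operator $\bigl(\ell-\tfrac12\bigr)i\,c(du)$, the $-\tfrac12$ being exactly the shift produced by the $\rho_\Sigma^{1/2}$-type weight implicit in passing between ${}^{\hc}K$ and the Hilbert-space trivialization (equivalently, the $\frac12$ in \eqref{bc.14}).

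The main obstacle I anticipate is bookkeeping the various $\rho_\Sigma$-powers correctly: the conformal rescaling $D_\ell = xD$ versus $D$, the weight $\rho_\Sigma$ in the target of \eqref{bc.15}, the $\ell$th power of the trivializing section $\rho_\Sigma dz$, and the identification of $L^2$-sections all shift the relevant radial exponent, and the whole content of the proposition is that these combine to the single clean number $\ell-\tfrac12$. To keep this honest I would fix one explicit convention at the start (say, work with $x\db_\ell$ as a genuine cusp operator and trivialize ${}^{\hc}K^\ell$ by $(\rho_\Sigma dz)^\ell$ with the induced Hermitian metric), carry out the one-line computation of $\Pi_0\bigl(x\db_\ell \wt e_\pm\big|_{\pa\Sigma_{\hc}}\bigr)$, and check the answer against the known special case $\ell=0$ (ordinary functions and $(0,1)$-forms, where the horizontal operator should be $-\tfrac12 i\,c(du)$, consistent with the Fredholmness of $\db_0$). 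Invertibility of $D^H_\ell$ — hence Fredholmness of $D_\ell$ via Proposition~\ref{FV} — is then immediate since $c(du)$ is invertible and $\ell-\tfrac12\neq 0$ for every $\ell\in\bbZ$.
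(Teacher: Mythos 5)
Your route is genuinely different from the paper's: the paper does not compute $D^H_{\ell}$ from the definition \eqref{fc.6} at all, but instead writes $\lrpar{\underline{\bbC}\oplus\Lambda^{0,1}\Sigma}\otimes K^{\ell}\cong S\otimes_{\bbR}\lrpar{S^{*}\otimes_{\bbC}K^{\ell}}$, identifies $D_{\ell}$ as a spin Dirac operator twisted by $S^{*}\otimes_{\bbC}K^{\ell}=K^{\ell-\frac12}$, and then quotes Vaillant's Proposition 3.15, which expresses the horizontal operator as $-iRc(\pa/\pa u)$ with $iR\,dg_{\Sigma}$ the curvature of the twisting bundle; the factor $\ell-\frac12$ is then simply the power of $K$. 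Your direct coordinate computation can be made to work and would be a legitimate alternative, but as described it has a genuine gap exactly at the point where the $-\frac12$ is supposed to appear.

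Concretely, carrying out your recipe in the orthonormal frame $\{2^{-\ell/2}(\rho dz)^{\ell},\,2^{-(\ell+1)/2}\rho d\bar z\otimes(\rho dz)^{\ell}\}$ one finds near the boundary
\[
D_{\ell}=\begin{pmatrix}0 & -\frac1\rho\pa_{x}-i\rho\pa_{\rho}+i(\ell-1)\\[2pt] \frac1\rho\pa_{x}-i\rho\pa_{\rho}-i\ell & 0\end{pmatrix},
\]
so that ``extend a constant section, apply $D_{\ell}$, restrict, project'' yields $\begin{pmatrix}0&i(\ell-1)\\-i\ell&0\end{pmatrix}$: your two summands contribute the constants $\ell$ and $\ell-1$, not $\ell-\frac12$ twice. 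That matrix is not self-adjoint, and at $\ell=0$ it is nilpotent, hence \emph{not} invertible, so the sanity check you propose would actually expose the problem rather than confirm the formula. The missing ingredient is the unitary identification of $L^{2}((0,\epsilon),d\rho)$ with the $b$-type space $L^{2}(\frac{d\rho}{\rho})$ (conjugation by $\rho^{1/2}$), under which $-i\rho\pa_{\rho}\mapsto -i\rho\pa_{\rho}+\frac{i}{2}$ becomes symmetric; this adds $\frac{i}{2}$ to each off-diagonal entry and turns the pair $(i(\ell-1),-i\ell)$ into $\pm i\lrpar{\ell-\frac12}$, i.e.\ into $\lrpar{\ell-\frac12}ic(du)$. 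Your attribution of the $-\frac12$ to ``the $\frac12$ in \eqref{bc.14}'' is wrong (that is merely $\pa_{\bar z}=\frac12(\pa_{x}+i\pa_{y})$), and it does not come from the metric trivialization of ${}^{\hc}K^{\ell}$ either, since $(\rho dz)^{\ell}$ has constant norm; it is the half-density normalization built into Vaillant's actual definition of the horizontal operator, of which \eqref{fc.6} is only an informal paraphrase. With that conjugation made explicit your computation closes; without it, it proves a different (and non-self-adjoint) formula.
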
 
\begin{proof}
The bundle on which $D_{\ell}$ acts is 
\[
     \lrpar{ \underline{\bbC}\oplus \Lambda^{0,1}\Sigma}\otimes K^{\ell}.
\]
Choose a spin structure on $\Sigma$ and let $S$ be the corresponding spinor bundle.  It is well-known (see for instance \cite{Lawson-Michelson}) that, seen as complex line bundle, $S$ is a square root of the canonical line bundle $K$ so that 
\[
      S\otimes_{\bbC} S= K.
\]
Moreover, we have also that 
\[
                 \lrpar{ \underline{\bbC}\oplus \Lambda^{0,1}\Sigma}\cong S \otimes_{\bbR} S^{*}.
\]
Thus the operator $D_{\ell}$ acts on
\[
     S\otimes_{\bbR} \lrpar{ S^{*}\otimes_{\bbC} K^{\ell}},
\]
which means that $D_{\ell}$ is a Dirac operator twisted by the bundle 
$S^{*}\otimes_{\bbC}K^{\ell}$.  As a bundle with connection, the bundle $S^{*}\otimes
K^{\ell}$ certainly does not have a product structure near the boundary since it has non-zero
curvature.  Thus, according to Proposition 3.15, p.44 in \cite{Vaillant}, the horizontal family
$D^{H}_{\ell}$ at each cusp is given by 
\[
    -iRc\lrpar{ \frac{\pa}{\pa u}} = -i\lrpar{\frac{1}{2}-\ell} c\lrpar{\frac{\pa}{\pa u}}
\]
where $iR dg_{\Sigma}$ is the curvature of the complex vector bundle $S^{*}\otimes_{\bbC} K^{\ell}$ (\cf \eqref{rr.13}).  Collecting the contributions at each cusp end, we get the desired result.
\end{proof}
This gives the following corollary.
\begin{corollary}
The operators 
\[
   D_{\ell}= \sqrt{2} \left( \begin{array}{cc}
                                 0 & \db^{*}_{\ell} \\
                                 \db_{\ell} & 0 \end{array}
   \right), \quad \db_{\ell}, \;\mbox{and}\;\;\db^{*}_{\ell}
\]
are Fredholm.
\label{fp.3}\end{corollary}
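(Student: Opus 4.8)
The plan is to deduce Corollary~\ref{fp.3} directly from Proposition~\ref{fp.2} via Vaillant's criterion, Proposition~\ref{FV}. First I would observe that $D_\ell$ is a formally self-adjoint Dirac-type hyperbolic cusp operator (as recorded after \eqref{rr.5}--\eqref{rr.6}), so Proposition~\ref{FV} applies: $D_\ell$ is Fredholm if and only if its horizontal family $D^H_\ell$ on $\mathcal K=\ker(D^V_\ell)$ is invertible. By Proposition~\ref{fp.2}, on each circular boundary component $D^H_\ell=(\ell-\tfrac12)\,ic(du)$. Since $c(du)$ is Clifford multiplication by a unit vector it satisfies $c(du)^2=-\Id$, hence $ic(du)$ is an involution and in particular invertible; and the scalar factor $\ell-\tfrac12$ is nonzero for every $\ell\in\bbZ$. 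Therefore $D^H_\ell$ is invertible, and Proposition~\ref{FV} gives that $D_\ell$ is Fredholm.

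Next I would pass from $D_\ell$ to $\db_\ell$ and $\db^*_\ell$. With respect to the $\bbZ/2$-grading $({}^{\hc}K^\ell)\oplus({}^{\hc}\Lambda^{0,1}_\Sigma\otimes{}^{\hc}K^\ell)$ the operator $D_\ell$ is off-diagonal with entries $\sqrt2\,\db^*_\ell$ and $\sqrt2\,\db_\ell$, so $D_\ell^2=2\,\diag(\db^*_\ell\db_\ell,\ \db_\ell\db^*_\ell)$. Fredholmness of $D_\ell$ means $D_\ell$ has closed range and finite-dimensional kernel and cokernel; since $D_\ell$ is self-adjoint this is equivalent to $0$ not being in the essential spectrum of $D_\ell^2$, i.e. to $0$ not being in the essential spectrum of either $\db^*_\ell\db_\ell$ or $\db_\ell\db^*_\ell$. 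This forces $\db_\ell$ (mapping the even part to the odd part) to have closed range, finite-dimensional kernel $\ker\db_\ell=\ker\db^*_\ell\db_\ell$, and finite-dimensional cokernel $\coker\db_\ell\cong\ker\db_\ell\db^*_\ell=\ker\db^*_\ell$; hence $\db_\ell$ is Fredholm, and symmetrically so is $\db^*_\ell$. Alternatively, and perhaps more cleanly for the write-up, one notes that a parametrix for $x\eth_{\hc}$ in the Fredholm case (compare the discussion around Proposition~\ref{FV}) can be arranged to respect the grading, directly producing parametrices for $\db_\ell$ and $\db^*_\ell$ modulo compact operators.

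The main obstacle, and the only point requiring genuine care, is the justification that Fredholmness of the self-adjoint Dirac operator $D_\ell$ transfers to its graded components $\db_\ell,\db^*_\ell$ in the hyperbolic cusp setting — in the compact case this is immediate, but here one must make sure the spectral/closed-range argument is valid for these complete, finite-volume metrics. This is handled by the functional-analytic content already packaged into Vaillant's framework: once $x\eth_{\hc}$ (equivalently $D^H_\ell$) is invertible, $\eth_{\hc}$ has discrete spectrum and compact resolvent on the orthogonal complement of its kernel, and all the operators in sight are densely defined, closed, with the same essential spectrum as their squares. I would therefore phrase the argument as: $D_\ell$ Fredholm $\Rightarrow$ $0\notin\operatorname{spec}_{\mathrm{ess}}(D_\ell^2)$ $\Rightarrow$ $0\notin\operatorname{spec}_{\mathrm{ess}}(\db^*_\ell\db_\ell)\cup\operatorname{spec}_{\mathrm{ess}}(\db_\ell\db^*_\ell)$ $\Rightarrow$ $\db_\ell,\db^*_\ell$ Fredholm, with Proposition~\ref{fp.2} supplying the initial invertibility. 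The remaining bookkeeping — checking $c(du)^2=-\Id$ from \eqref{rr.6} and that $\ell-\tfrac12\neq0$ — is routine.
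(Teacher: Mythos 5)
Your proposal is correct and follows exactly the route the paper intends: Proposition~\ref{fp.2} gives $D^{H}_{\ell}=(\ell-\tfrac12)\,ic(du)$, which is invertible since $\ell-\tfrac12\neq0$ for $\ell\in\bbZ$ and $(ic(du))^{2}=\Id$, so Vaillant's criterion (Proposition~\ref{FV}) yields Fredholmness of $D_{\ell}$ and hence of its graded components. The paper leaves the passage from $D_{\ell}$ to $\db_{\ell}$ and $\db^{*}_{\ell}$ implicit; your spectral/closed-range justification of that step is a correct and harmless elaboration.
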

Notice that proposition~\ref{fp.2} is also consistent with the well-known fact that the
band of continuous spectrum of the Hodge Laplacian $D_{\ell}^{2}$ starts at 
$\lrpar{ \frac{1}{2}-\ell}^{2}$ and goes to infinity.

In his
thesis \cite{Vaillant}, Vaillant obtained a general formula for the index
of a Dirac type operator on a fibred hyperbolic cusp operator.  For the index
of the operator $\db_{\ell}$, this formula is given by the usual
Atiyah-Singer integrand together with two corrections coming from the boundary,
namely the eta invariants associated to the vertical family of $\db_{\ell}$ and the
horizontal family $D^{H}_{\ell}$,
\begin{equation}
\ind( \db_{\ell})=
\int_{\Sigma_{\hc}} \Ch({}^{\hc}K^{\ell})\Td({}^{\hc}K^{-1}) -
\frac{1}{2} \eta(D^{V}_{\ell}) - \frac{1}{2}\eta(D^{H}_{\ell}).
\label{rr.8}\end{equation}
The eta invariant of the vertical family  is easily seen to be zero.  This is because modulo standard identifications, $\eta(D^{V}_{\ell})$ corresponds to $n$ times the eta invariant
of the self-adjoint operator
\begin{equation}
   \frac{1}{i}\frac{\pa}{\pa x}= i\frac{\pa}{\pa u}: 
\CI( \bbR/\bbZ) \to \CI(\bbR/\bbZ).
\label{rr.9}\end{equation}
But the spectrum
of $\frac{1}{i}\frac{\pa}{\pa x}$ is  $2\pi\bbZ$ and its eta functional 
\begin{equation}
 \eta( \frac{1}{i}\frac{\pa}{\pa x}, s)= \sum_{k\ne 0} 2\pi k |2\pi k|^{-s}, \quad
\Re s>> 0
\label{rr.10}\end{equation}
 is identically zero.  Thus its spectral asymmetry or eta invariant, which
is the value at $s=0$ of the analytic continuation of 
$\eta( i\frac{\pa}{\pa x}, s)$, is zero.  The corresponding eta 
invariant $\eta(D_{\ell}^{V})= n\eta( i\frac{\pa}{\pa x})$ therefore
vanishes. For the computation of the spectral asymmetry of $D^{H}_{\ell}$, there is no 
regularization involved since $D^{H}_{\ell}$ is just an endomorphism of a finite dimensional
vector space.  From proposition~\ref{fp.2}, we compute directly (see \cite[(4.14)]{Albin-Rochon}) that
\begin{equation}
     \eta( D^{H}_{\ell}) = n \sign\lrpar{\ell-\frac{1}{2}}.
\label{fp.18}\end{equation}
The index is therefore given by
\begin{equation}
\ind( \db_{\ell})=
\int_{{}^{\hc}\Sigma} \Ch({}^{\hc}K^{\ell})\Td({}^{\hc}K^{-1}) +
\frac{n}{2}\sign\lrpar{\frac{1}{2}-\ell}.
\label{rr.11}\end{equation}
The integral is also easy to compute.  Let $\Theta_{\Sigma}$ denote the 
curvature of ${}^{\hc}T^{1,0}\Sigma$.  Then the integrand is
given by
\begin{equation}
\begin{aligned}
\Ch({}^{\hc}K^{\ell})\Td({}^{\hc}T^{1,0}\Sigma)&= 
\left(e^{-\frac{\ell i}{2\pi}\Theta_{\Sigma}}\right) 
\left( \frac{\frac{i}{2\pi}
\Theta_{\Sigma}}{1-e^{-\frac{i}{2\pi}\Theta_{\Sigma}}}\right) \\
&= 1 + \lrpar{\frac{1}{2}- \ell}\frac{i}{2\pi}\Theta_{\Sigma}.
\end{aligned}
\label{rr.12}\end{equation}
By a standard computation (see for instance p.77 in \cite{Griffiths-Harris}), we know that 
\begin{equation}
  \frac{i}{2\pi} \Theta_{\Sigma} = \frac{\kappa}{2\pi} dg_{\Sigma}= -\frac1{2\pi}dg_{\Sigma}
\label{rr.13}\end{equation}
where $\kappa=-1$ is the Gaussian curvature of $g_{\Sigma}$.  
By the Gauss-Bonnet
theorem applied to $\Sigma$, we get that 
\begin{equation}
\begin{aligned}
\ind( \db_{\ell}) &= \lrpar{\frac{1}{2}-\ell} \int_{\Sigma_{\hc}} \frac{\kappa}{2\pi}dg_{\Sigma} + \frac{n}{2}\sign\lrpar{\frac{1}{2}-\ell}  \\
   &= \lrpar{\frac{1}{2}-\ell} \chi(\Sigma)+ \frac{n}{2}\sign\lrpar{\frac{1}{2}-\ell}  \\
&= \lrpar{\frac{1}{2}-\ell}(2-2g-n) + \frac{n}{2}\sign\lrpar{\frac{1}{2}-\ell}.
\end{aligned}
\label{rr.14}\end{equation}
This gives the following formula.
\begin{proposition}
The index of $\db_{\ell}$ is given by
\begin{equation*}
   \ind(\db_{\ell})= \left\{  
      \begin{array}{ll}
         (2\ell-1)(g-1) + \ell n, & \ell\le 0,\\
         (2\ell-1)(g-1) + (\ell-1)n, & \ell>0.
      \end{array}
   \right.
\end{equation*}
\label{rrt}\end{proposition}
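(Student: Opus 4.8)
The plan is simply to evaluate the closed-form expression in \eqref{rr.14} and split the result according to the sign of $\frac{1}{2}-\ell$. Since $\ell$ is an integer, the only distinction needed is whether $\ell\le 0$ (so $\frac12-\ell>0$ and $\sign(\frac12-\ell)=+1$) or $\ell\ge 1$ (so $\frac12-\ell<0$ and $\sign(\frac12-\ell)=-1$); there is no boundary case $\ell=\frac12$ to worry about. Starting from
\begin{equation*}
\ind(\db_{\ell})= \lrpar{\tfrac{1}{2}-\ell}(2-2g-n) + \tfrac{n}{2}\sign\lrpar{\tfrac{1}{2}-\ell},
\end{equation*}
I would first rewrite $\lrpar{\frac12-\ell}(2-2g-n)$ as $(2\ell-1)(g-1) - \lrpar{\frac12-\ell}n = (2\ell-1)(g-1) + \lrpar{\ell-\frac12}n$, using $\frac12-\ell = -(\ell-\frac12)$ and $2-2g-n = -2(g-1)-n$, so that $\lrpar{\frac12-\ell}\cdot(-2(g-1)) = (2\ell-1)(g-1)$.

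For $\ell\le 0$: here $\sign(\frac12-\ell)=1$, so the two $n$-terms combine as $\lrpar{\ell-\frac12}n + \frac{n}{2} = \ell n$, giving $\ind(\db_\ell)=(2\ell-1)(g-1)+\ell n$. For $\ell\ge 1$: here $\sign(\frac12-\ell)=-1$, so the $n$-terms combine as $\lrpar{\ell-\frac12}n - \frac{n}{2} = (\ell-1)n$, giving $\ind(\db_\ell)=(2\ell-1)(g-1)+(\ell-1)n$. This is exactly the claimed case split, so the proof is complete.

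There is essentially no obstacle here: the substantive work — identifying the $\db$-operator as a Dirac-type hyperbolic cusp operator, verifying Fredholmness via Proposition~\ref{fp.2} and Corollary~\ref{fp.3}, computing the eta invariants and the Atiyah–Singer integral, and applying Gauss–Bonnet — has already been carried out in the lead-up to \eqref{rr.14}. The proposition is purely a bookkeeping consequence, repackaging \eqref{rr.14} into a form that makes the dependence on the sign of $\ell$ transparent. If one wished, one could sanity-check against the classical Riemann–Roch theorem: for $\ell\le 0$ the space of holomorphic sections of $K^\ell$ on the compact surface $\bar\Sigma$ of genus $g$, and for $\ell\ge 1$ the space of holomorphic $\ell$-differentials vanishing appropriately at the punctures, should match $\ind(\db_\ell)=\dim\ker\db_\ell-\dim\coker\db_\ell$; this comparison is carried out in \S\ref{rr.0} as advertised in the introduction and serves as an independent confirmation.
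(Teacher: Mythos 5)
Your proposal is correct and matches the paper exactly: the proposition is stated there as an immediate consequence of \eqref{rr.14} (``This gives the following formula''), and your algebra converting $\lrpar{\frac12-\ell}(2-2g-n)+\frac{n}{2}\sign\lrpar{\frac12-\ell}$ into the two cases $\ell\le 0$ and $\ell\ge 1$ is exactly the implicit bookkeeping step. Nothing further is needed.
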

In fact, using the Riemann-Roch theorem on the compact Riemann surface $\overline{\Sigma}$, it is also possible to compute explicitly the dimension of the 
kernel and the 
cokernel of $\db_{\ell}$ (\cf p.404 in \cite{TZ}).  By definition,
an element of $f\in \ker{\db_{\ell}}$ is a holomorphic section of
$K^{\ell}$, so in each cusp end $N_{j}$, it has a Laurent series expansion
\begin{equation}
  f(z)= \sum_{k=-\infty}^{\infty} a^{(j)}_{k}e^{2\pi i kz} (dz)^{\ell}.
\label{rr.15}\end{equation}  
When $\ell>0$, this expansion has to be of the form 
\begin{equation}
  f(z)= \sum_{k=1}^{\infty} a^{(j)}_{k}e^{2\pi i kz} (dz)^{\ell}
\label{rr.16}\end{equation}
in order for $f$ to be an element of $\cH_{\ell,0}$.  Such an $f$ is 
said to be a \textbf{cusp form} of weight $(2\ell,0)$.  When $\ell\le 0$,
we can also have a constant coefficient in the series,
\begin{equation}
f(z)= \sum_{k=0}^{\infty} a^{(j)}_{k}e^{2\pi i kz}(dz)^{\ell}.
\label{rr.17}\end{equation}
When $\ell=0$, using the coordinate $\zeta:= e^{2\pi i z}$ near
each puncture $x_{j}$ in $\overline{\Sigma}$, we see that such a 
$f$ naturally extends to give a holomorphic function on 
$\overline{\Sigma}$.  It is therefore constant, so that
$\dim_{\bbC} \ker \db_{0}=1$.  When $\ell\ge 1$, the section $f$ takes the
form
\begin{equation}
 f(z)= \sum_{k=1}^{\infty} a^{(j)}_{k} \zeta^{k} \left( 
 \frac{d\zeta}{2\pi i \zeta}\right)^{\ell} 
\label{rr.19}\end{equation}
in the coordinate $\zeta$ near the puncture $x_{j}$.  Thus, it naturally
extends to a meromorphic section of $\overline{K}^{\ell}\to \overline{\Sigma}$
with poles of order not exceeding $\ell-1$ at each puncture 
$x_{1},\ldots,x_{n}$ and holomorphic elsewhere.  Conversely, such a meromorphic
section corresponds to an element of $\ker \db_{\ell}$.  We can thus
compute $\dim_{\bbC}\ker \db_{\ell}$ by applying the Riemann-Roch theorem
on $\overline{\Sigma}$ to the line bundle
\begin{equation}
  L_{D}\otimes \overline{K}^{\ell}
\label{rr.20}\end{equation}
where $L_{D}$ is the holomorphic line bundle associated to the divisor
\begin{equation}
     D= \sum_{i=1}^{n} (\ell-1)x_{i} \quad \mbox{on} \;\overline{\Sigma}.
\label{rr.21}\end{equation}
This gives
\begin{equation}
\begin{aligned}
\dim_{\bbC} \ker \db_{\ell}&= h^{0}(L_{D}\otimes \overline{K}^{\ell}) \\
 &= h^{0}(\overline{K}\otimes (L_{D}\otimes \overline{K}^{\ell})^{-1})
        +\deg(L_{D}\otimes \overline{K}^{\ell})-g +1 \\
&= h^{0}(\overline{K}\otimes (L_{D}\otimes \overline{K}^{\ell})^{-1})+
n(\ell-1)+ (2\ell-1)(g-1),
\end{aligned}
\label{rr.22}\end{equation}
where $h^{0}(L)$ denotes the dimension of the space of holomorphic sections
of the holomorphic line bundle $L$.
Now we compute that 
\begin{equation}
  \deg(\overline{K}\otimes L_{D}^{-1}\otimes \overline{K}^{-\ell})= -(\ell-1)(2g+n-2).
\label{rr.23}\end{equation}
When $\ell=1$, $\overline{K}\otimes 
(L_{D}\otimes \overline{K}^{\ell})^{-1}$ is the trivial line bundle,
so $h^{0}(\overline{K}\otimes 
(L_{D}\otimes \overline{K}^{\ell})^{-1})=1$ in this case.  When $\ell>1$,
$\deg(K\otimes L_{D}^{-1}\otimes K^{-\ell})<0$ since we assume that
$2g+n\ge 3$, and therefore $h^{0}(K\otimes L_{D}^{-1}\otimes K^{-\ell})=0$.
Finally, when $\ell<0$, elements of $\ker\db_{\ell}$ correspond to
holomorphic sections of $K^{\ell}_{\overline{\Sigma}}$ with zeros
of degree at least $-\ell$ at each puncture.  These in turn correspond
to the holomorphic sections of a holomorphic line bundle of negative degree
(since $2g+n\ge 3$), so that $\ker\db_{\ell}=0$ in that case.
Hence, we see that the dimension of the kernel of $\db_{\ell}$ is given
by
\begin{equation}
\dim \ker \db_{\ell} = \left\{ \begin{array}{ll}
                       0, & \ell<0, \\
                       1, & \ell=0, \\
                       g, & \ell=1 \\
                       (2\ell-1)(g-1)+ n(\ell-1), & l\ge 2.
\end{array}
\right.
\label{rr.24}\end{equation}
Comparing with the index \eqref{rr.14}, we also get that
\begin{equation}
\dim \ker \db_{\ell}^{*} = \left\{ \begin{array}{ll}
                       -(2\ell-1)(g-1)- n\ell, & \ell<0, \\
                       g, & \ell=0, \\
                       1, & \ell=1 \\
                       0, & l\ge 2.
\end{array}
\right.
\label{rr.25}\end{equation}
These formulas are consistent with Kodaira-Serre duality, which asserts in this case
that $\ker \db^{*}_{\ell}\cong \ker \db_{1-\ell}$.

\section{The Teichm\"uller space and the Teichm\"uller universal curve}
\label{ts.0}

So far we have assumed that the complex structure on $\Sigma$ was fixed.  By
changing the complex structure, one can get instead a family of $\db_{\ell}$ 
operators.  The universal case is obtained by considering all at once 
the moduli space of all complex structures on a surface of type $(g,n)$, two complex
structures being identified whenever there is a conformal transformation between them
homotopic to the identity.
It is called the Teichm\"uller space of Riemann surfaces of genus $g$ with
$n$ punctures and is denoted $T_{g,n}$.  It is a complex manifold of
complex  dimension $3g-3+n$ which can be identified with an open set
of $\bbC^{3g-3+n}$.  The Teichm\"uller space $T_{g,n}$ comes together
with a universal bundle, the universal Teichm\"uller curve $\cT_{g,n}$
with bundle projection
\begin{equation}
   p: \cT_{g,n}\to T_{g,n}
\label{ts.1}\end{equation} 
and fibre $p^{-1}([\Sigma])$ the Riemann surface $\Sigma$ of type $(g,n)$ 
corresponding to the point $[\Sigma]\in T_{g,n}$.  Denote
by $T_{v}^{i,j}\cT_{g,n}\to \cT_{g,n}$ the vertical $(i,j)$ tangent bundle of the
fibration \eqref{ts.1} for $i,j\in\{0,1\}$.  On each fibre $\Sigma:= p^{-1}([\Sigma])$, the 
restriction of $T_{v}^{i,j}\cT_{g,n}$ is canonically identified
with $T^{i,j}\Sigma$. Denote by $\Lambda_{v}^{i,j}\to \cT_{g,n}$ the
dual of $T_{v}^{i,j}$.  
On each fibre we also have a $\db$-operator.
These operators fit together to give a family of operators
\begin{equation}
   \db_{\ell}\in \rho^{-1}\Psi^{1}_{\cusp}(\cT_{g,n}/T_{g,n};
             (\Lambda^{1,0}_{v})^{\ell}, \Lambda^{0,1}_{v}\otimes
                 (\Lambda_{v}^{1,0})^{\ell})
\label{ts.2}\end{equation}
where $\rho$ is an appropriate boundary defining function (whose precise definition we postpone to \eqref{con.19}). 
Each element of the family is a Fredholm operator so that we have a 
family index in $K^{0}(T_{g,n})$,
\begin{equation}
 \ind(\db_{\ell})\in K^{0}(T_{g,n}).
\label{ts.3}\end{equation}
Since the Teichm\"uller space is contractible, this families index really
only encodes the numerical index of any member of the family
under the identification $K^{0}(T_{g,n})\cong K^{0}(\pt)\cong \bbZ$.  Still,
it is possible to exhibit an explicit representative of the $K$-class
$\ind(\db_{\ell})\in K^{0}(T_{g,n})$, providing in this way a local description
of the family index.  This is because, according to \eqref{rr.24} and
\eqref{rr.25}, the dimensions of the kernel and the cokernel of elements
of the family $\db_{\ell}$ are always the same (they only depend on
$\ell$, $g$ and $n$, not on the complex structure).  This means that 
\begin{equation}
  \ker \db_{\ell}\to T_{g,n} \quad \mbox{and}\quad 
 \ker\db_{\ell}^{*}\to T_{g,n}
\label{ts.5}\end{equation}
form complex vector bundles on $T_{g,n}$ and the family index of 
$\db_{\ell}$ can then be expressed as the virtual difference of these two
vector bundles,
\begin{equation}
  \ind \db_{\ell}= [\ker \db_{\ell}]- [ \ker \db_{\ell}^{*}]\in 
K^{0}(T_{g,n}).
\label{ts.6}\end{equation} 
In fact, as we will recall in a moment, these vector bundles both come
equipped with a natural connection.  We can therefore express their
respective Chern characters at the level of forms.  This provides a 
local description of the Chern character of the family index
\begin{equation}
   \Ch( \ind \db_{\ell}):= \Ch(\ker \db_{\ell})-
                   \Ch(\ker \db_{\ell}^{*}) \in \CI(T_{g,n},
\Lambda^{\even}(T_{g,n})).
\label{ts.7}\end{equation}
On the Teichm\"uller space itself, this local description of the index
does not contain more cohomological information than \eqref{rr.14}.  However,
the local descriptions \eqref{ts.6} and \eqref{ts.7} are invariant
under the action of the Teichm\"uller modular group $\Mod_{g,n}$.  This means that
these local descriptions descend to the moduli space $T_{g,n}/\Mod_{g,n}$ (in the sense of orbifolds), which
typically has a non-trivial topology as well as singularities.

\section{The canonical connection on the universal Teichm\"uller curve}
\label{con.0}

The fibration $p:\cT_{g,n}\to T_{g,n}$ comes together with a canonical
connection $\cP$.  To describe this connection, one possible approach is
to describe Riemann surfaces as certain quotients of the upper 
half-plane $\bbH$.  If $\Sigma$ is a Riemann surface of genus $g$ with
$n$ punctures, then it can be represented as a quotient $\Gamma\setminus
\bbH$ of the upper half-plane by the action of a torsion-free finitely
generated Fuchsian group $\Gamma$.  The group $\Gamma\subset \PSL(2,\bbR)$
is of type $(g,n)$, which is to say it
is generated by $2g$ hyperbolic transformations 
$A_{1},B_{1},\ldots, A_{g},B_{g}$ and $n$ parabolic transformations
$S_{1},\ldots,S_{n}$ satisfying the single relation
$A_{1}B_{1}A_{1}^{-1}B_{1}^{-1}\cdots A_{g}B_{g}A_{g}^{-1}B_{g}^{-1}S_{1}
\cdots S_{n}=1$.  Since $\bbH$ is simply connected, in
fact contractible, it is the universal cover of $\Sigma$ under the quotient
map $\bbH\to \Gamma\setminus \bbH$ .
From this perspective, the canonical hyperbolic metric
$g_{\Sigma}$ associated to the (conformal structure of the) complex structure
is precisely the metric on $\Gamma\setminus \bbH$ induced from 
the Poincar\'e metric 
\begin{equation}
  g_{\bbH}:= \frac{dx^{2}+dy^{2}}{y^{2}}\; \mbox{on}\; \bbH.
\label{con.1}\end{equation}
The punctures of $\Sigma$ then correspond to the image of 
the fixed points $z_{1},\cdots,z_{n}$ in  $\bbR\cup\{\infty\}$ of the
parabolic transformations $S_{1},\ldots,S_{n}$ under the quotient
map $\bbH\to \Gamma\setminus \bbH$.  Let $\Gamma_{i}$ be the 
cyclic subgroup of $\Gamma$ generated by the parabolic transformation
$S_{i}$ for $i=1,\ldots,n$.  It can be identified with the cyclic
group $\Gamma_{\infty}$ by choosing $\sigma_{i}\in \PSL(2,\bbR)$ such that
$\sigma_{i}\infty= z_{i}$, so that 
\begin{equation}
   \sigma_{i}^{-1}S_{i}\sigma_{i}= \left(
                 \begin{array}{cc}
                        1& \pm 1 \\
                        0 & 1
                 \end{array}
\right), \quad 
                \sigma_{i}^{-1}\Gamma_{i}\sigma_{i}= \Gamma_{\infty}.
\label{con.2}\end{equation}
On $\Sigma$, sections of $(\Lambda^{1,0}_{\Sigma})^{\ell}\otimes
((\Lambda_{\Sigma})^{0,1})^{m}$ correspond to automorphic forms 
of weight $(2\ell,2m)$ with respect to the group $\Gamma$, that is, functions
$f:\bbH\to \bbC$ such that 
\begin{equation}
 f(\gamma z) \gamma'(z)^{\ell} \overline{ \gamma'(z)}^{m}= f(z)
\quad \forall z\in \bbH, \; \forall \gamma\in \Gamma.
\label{con.3}\end{equation}
For instance, the natural K\"ahler metric associated to the hyperbolic 
metric $g_{\Sigma}$, seen as a section of $\Lambda_{\Sigma}^{1,0}\otimes 
\Lambda_{\Sigma}^{0,1}$, corresponds to the automorphic form of weight
$(2,2)$
\begin{equation}
        \frac{1}{y^{2}} \;\mbox{on}\;\bbH.
\label{con.4}\end{equation}
In the correspondence between Riemann surfaces and quotients of $\bbH$,
a change of complex structure corresponds to a change of the 
Fuchsian group $\Gamma$.  This provides a canonical identification between
the Teichm\"uller space $T_{g,n}$ of Riemann surfaces of type $(g,n)$ and
the Teichm\"uller space of Fuchsian groups of type $(g,n)$.  Under this 
identification, the tangent space of $T_{g,n}$ at $[\Sigma]$ can
be identified with the subspace 
$\Omega^{-1,1}(\Sigma)= \ker \db_{-1}^{*}\subset \cH_{-1,1}$
of harmonic Beltrami differentials.  Each element
of $\mu\in \Omega^{-1,1}(\Sigma)$ has the form
$\mu= y^{2}\overline{\varphi}$ for a unique 
$\varphi\in \ker \db_{2}$, so that $\dim_{\bbC}\Omega^{-1,1}(\Sigma)=
3g-3+n$.  In particular, an element of $\Omega^{-1,1}(\Sigma)$ decays exponentially
fast as one approaches a puncture (using the coordinates of \eqref{bc.7}).    
The (holomorphic) cotangent space $T^{*}_{[\Sigma]}T_{g,n}$ can be
identified with $\ker\db_{2}$ on $\Sigma$, this space being naturally
dual to $\Omega^{-1,1}(\Sigma)$ via the pairing 
\begin{equation}
(\mu,\varphi):= \int_{\Sigma} \mu\varphi, \quad 
  \mu\in \Omega^{-1,1}(\Sigma), \,\varphi\in \ker\db_{2}.
\label{con.5}\end{equation}
To get complex coordinates on $T_{g,n}$ we can use the fact that to
every $\mu\in \Omega^{-1,1}(\Sigma)$ satisfying
\begin{equation}
        \| \mu\|_{L^{\infty}}= \sup_{z\in\Sigma}|\mu(z)|<1,
\label{con.6}\end{equation}
one can associate a unique diffeomorphism $f^{\mu}:\bbH\to \bbH$
satisfying the Beltrami equation
\begin{equation}
             \frac{\pa f^{\mu}}{\pa \overline{z}}= \mu 
\frac{\pa f^{\mu}}{\pa z}
\label{con.7}\end{equation}
and fixing the points $0,1,\infty$, where $\mu$ in \eqref{con.7} is
seen as an automorphic form of weight $(-2,2)$ on $\bbH$.  From this solution, one gets
a new Fuchsian group by considering $\Gamma^{\mu}:= 
f^{\mu}\Gamma(f^{\mu})^{-1}$,
that is, a new complex structure by considering the Riemann surface
$\Sigma^{\mu}:= \Gamma^{\mu}\setminus \bbH$.
The diffeomorphism $f^{\mu}$ also naturally descends to the quotient
$\Gamma\setminus \bbH$ to give a diffeomorphism
\begin{equation}
    f^{\mu}: \Gamma\setminus\bbH\to \Gamma^{\mu}\setminus\bbH.
\label{con.7b}\end{equation}
Now, if one chooses a basis $\mu_{1}, \ldots, \mu_{3g-3+n}$ of 
$\Omega^{-1,1}(\Sigma)$ and sets $\mu= \varepsilon_{1}\mu_{1}+\cdots
+\varepsilon_{3g-3+n}\mu_{3g-3+n}$,
then the correspondence $(\varepsilon_{1},\ldots,\varepsilon_{3g-3+n})
\mapsto [\Sigma^{\mu}]$ defines complex coordinates in a neighborhood 
of $[\Sigma]\in T_{g,n}$ called \textbf{Bers coordinates}.  In the overlapping of
neighborhoods of two points $[\Sigma]$ and $[\Sigma^{\mu}]$, the Bers
coordinates transform complex analytically (see for instance p.409 in 
\cite{TZ}), defining on $T_{g,n}$ a complex structure.  The Bers coordinates
provide a local trivialization of the fibration 
$p:\cT_{g,n}\to T_{g,n}$ of the universal Teichm\"uller curve, in fact,
of its universal cover, the Bers fibre space $\cB\cF_{g,n}$ (see 
p.138 in \cite{Wolpert}).    
If $\cU\subset T_{g,n}$ is the open set where the Bers coordinates
$(\varepsilon_{1},\ldots,\varepsilon_{3g-g+n})$
associated to $[\Sigma]$ are defined, then this trivialization is 
given by the commutative diagram
\begin{equation}
\xymatrix{  \cU\times \Sigma \ar[r]^{\nu}\ar[rd]^{\pr_{1}} &  
p^{-1}(\cU)\ar[d]^{p} \\
   &  \cU
}
\label{con.8}\end{equation}
where $\pr_{1}$ is the projection on the first factor and $\nu$
is given by $\nu(\mu,\sigma)= f^{\mu}(\sigma)\in p^{-1}([\Sigma^{\mu}])$ where
$f^{\mu}$  denotes the map \eqref{con.7b}.  

This local trivialization also induces a lift of $T_{[\Sigma]}T_{g,n}$ to
$\left.T\cT_{g,n}\right|_{p^{-1}([\Sigma])}$, 
namely (see p.142 in \cite{Wolpert}), a vector $\mu\in T_{[\Sigma]}T_{g,n}$ 
has a canonical lift $pr_{1}^{*}\mu\in \left.T(\cU\times \Sigma)
\right|_{\{[\Sigma]\}\times \Sigma}$, and therefore a canonical lift
$\nu_{*}(\pr_{1}^{*}\mu)\in \left. T\cT_{g,n}\right|_{p^{-1}([\Sigma])}$.
More generally, introducing Bers coordinates at each $[\Sigma]\in T_{g,n}$,
we can get in this way a canonical horizontal lift of $TT_{g,n}$
to $T\cT_{g,n}$.  In other words, associated to the fibration
$p:\cT_{g,n}\to T_{g,n}$, there is a \textbf{canonical connection} $\cP$, that
is, $\cP\subset TT_{g,n}$ is a distribution of hyperplanes 
such that 
\begin{equation}
   p_{*}: \cP_{z}\to T_{p(z)}T_{g,n}
\label{con.10}\end{equation}
is an isomorphism for every $z\in \cT_{g,n}$.  It is also possible to
define a covariant derivative
\begin{equation}
          \nabla^{\cP}: \CI(\cT_{g,n}; (\Lambda^{1,0}_{v})^{\ell}\otimes
                 (\Lambda_{v}^{0,1})^{m})\to \CI(\cT_{g,n};
            p^{*}(T^{*}_{g,n})\otimes (\Lambda^{1,0}_{v})^{\ell}\otimes
                 (\Lambda_{v}^{0,1})^{m}).
\label{con.11}\end{equation}
This allows one to differentiate sections of $(\Lambda_{v}^{1,0})^{\ell}\otimes
(\Lambda_{v}^{0,1})^{m}$ with respect to vectors on the base $T_{g,n}$.
At $[\Sigma]\in T_{g,n}$, the differentiation can be described by
using the Bers coordinates associated to $T_{[\Sigma]}T_{g,n}\cong
\Omega^{-1,1}(\Sigma)$ with the local trivialization \eqref{con.8} of
$p:\cT_{g,n}\to T_{g,n}$ near $[\Sigma]$.  In this trivialization,
a section $\omega$ of $(\Lambda^{1,0}_{v})^{\ell}\otimes
                 (\Lambda_{v}^{0,1})^{m}$ corresponds to a section
$\widetilde{\omega}$ of $(\pr_{2}^{*}\Lambda_{\Sigma}^{1,0})^{\ell}\otimes
(\pr_{2}^{*}\Lambda_{\Sigma}^{0,1})^{m}$ on $\cU\times \Sigma$ where
$\pr_{2}:\cU\times\Sigma\to \Sigma$ is the projection on the second factor.
Precisely, in terms of automorphic forms of weight $(2\ell,2m)$, we have that 
\begin{equation}
 \widetilde{\omega}(\varepsilon,\sigma)= \omega\circ f^{\mu} \left(\frac{\pa f^{\mu}}{\pa z}
 \right)^{\ell} \left(\overline{\frac{\pa f^{\mu}}{\pa z}}
 \right)^{m}
\label{con.12}\end{equation}
where $\mu= \varepsilon_{1}\mu_{1}+\cdots+\varepsilon_{3g-3+n}\mu_{3g-3+n}$.
On $\Sigma= p^{-1}([\Sigma])\subset \cT_{g,n}$, there is a canonical
identification between $(\Lambda^{1,0}_{v})^{\ell}\otimes
                 (\Lambda_{v}^{0,1})^{m}$  and
$(\Lambda^{1,0}_{\Sigma})^{\ell}\otimes
                 (\Lambda_{\Sigma}^{0,1})^{m}$.  Under this identification,
the covariant derivative of $\omega$ takes the form (\cf p.409 in
\cite{TZ}),
\begin{equation}
  \left.\nabla^{\cP}_{\frac{\pa}{\pa \varepsilon_{i}}} \omega
\right|_{p^{-1}([\Sigma])}= \left.
\frac{\pa}{\pa\varepsilon_{i}} \widetilde{\omega}(\varepsilon,\sigma)
\right|_{\varepsilon=0}.
\label{con.13}\end{equation}

An important example is given by the family of fibrewise hyperbolic area forms
$dg_{\Sigma}$, which as was shown in \cite{Ahlfors} gives a parallel section of $\Lambda^{1,1}_{v}$ with respect
to the connection $\cP$,
\[
        \nabla^{\cP} dg_{\Sigma}=0.
\]
This corresponds to the fact that the automorphic form of weight $(2,2)$ $\frac{1}{y^{2}}$ is
parallel with respect to the connection $\cP$.  However, notice that this does not
imply the family of hyperbolic metrics $g_{\Sigma}, [\Sigma]\in T_{g,n}$ is parallel
with respect to $\cP$ as a section of $T^{*}_{v}\cT_{g,n}\otimes T^{*}_{v}\cT_{g,n}$.
In fact, they cannot be parallel with respect to any connection, since otherwise this would mean
that these metrics are all isometric, a contradiction since essentially by definition
of the Teichm\"uller space, these metrics are not even conformal to one another.  

It is also possible to define the covariant derivative of families of operators using the
connection $\cP$.  If $A^{\varepsilon}: \cH_{\ell,m}(\Sigma^{\mu})\to
\cH_{\ell',m'}(\Sigma^{\mu})$ is such family in the trivialization \eqref{con.8} given
by the Bers coordinates, then the covariant derivative of $A^{\varepsilon}$ at $[\Sigma]$ is
given by
\begin{equation}
\begin{gathered}
\left.\nabla^{\cP}_{\frac{\pa}{\pa \varepsilon_{i}}} A^{\varepsilon} \right|_{[\Sigma]} =
  \left. \frac{\pa}{\pa \varepsilon_{i}} (f^{\mu})^{*} A^{\varepsilon}
  (f^{\mu *})^{-1} \right|_{\varepsilon=0},   \\
    \left.\nabla^{\cP}_{\frac{\pa}{\pa \overline{\varepsilon}_{i}}} A^{\varepsilon} \right|_{[\Sigma]} =
  \left. \frac{\pa}{\pa \overline{\varepsilon}_{i}} (f^{\mu})^{*} A^{\varepsilon}
  (f^{\mu *})^{-1} \right|_{\varepsilon=0}. 
\end{gathered}
\label{con.17}\end{equation}
For example, the covariant derivatives of $\db_{\ell}$ and $\db_{\ell}^{*}$ at $[\Sigma]$
are given by (see formula (2.6) in \cite{TZ})
\begin{equation}
\begin{gathered}
   \nabla^{\cP}_{\mu} \db_{\ell}= \mu \db^{*}_{\ell+1}u, \quad \nabla^{\cP}_{\overline{\mu}}\db_{\ell}=0,  \\
   \nabla^{\cP}_{\mu} \db_{\ell}^{*}=0, \quad \nabla^{\cP}_{\overline{\mu}} \db^{*}_{\ell}=
     \overline{\mu}\db_{\ell-1} u^{-1}   
\end{gathered}
\label{con.18}\end{equation}
where $u:=\frac{1}{y^{2}}$ is seen as a section of $\Lambda^{1,0}_{\Sigma}\otimes \Lambda^{0,1}_{\Sigma}$.

As we have seen, each Riemann surface $\Sigma$ of type $(g,n)$ has a boundary compactification
$\Sigma_{\hc}$ constructed using the metric $g_{\Sigma}$.  These compactifications fit together
to give a fibrewise boundary compactification ${}^{\hc}\cT_{g,n}$ of the universal
Teichm\"uller curve.   In terms of the local trivializations of \eqref{con.8}, this is 
because the solution $f^{\mu}$ to the Beltrami equation \eqref{con.7} is
real analytic (see for instance proposition 4.6.2 in \cite{Hubbard}), it maps the fixed
points of $\Gamma$ to the fixed points of $\Gamma^{\mu}$ and, seen as a map 
$f^{\mu}:\Sigma\to \Sigma^{\mu}$, it is asymptotically holomorphic as one approaches any puncture of $\Sigma$.  Since the canonical connection $\cP$ is obtained by using Bers coordinates and infinitesimal deformations induced by the solutions of the Beltrami equation
\eqref{con.7}, we see that it also naturally lifts to provide a canonical connection 
${}^{\hc}\cP$ to the fibration 
\[
{}^{\hc}p: {}^{\hc}\cT_{g,n}\to T_{g,n}.
\]

To get a natural boundary defining function for ${}^{\hc}\cT_{g,n}$, we use the construction
of \eqref{bc.10} in each fibre.  This definition depends on the choice of a number
$\epsilon>0$ which has to be chosen so that each cusp end $N_{i}$ in a given 
surface has area strictly
greater than $\epsilon$.  To get a global definition ${}^{\hc}\cT_{g,n}$, we should replace 
the number $\epsilon$ by a smooth function $a: T_{g,n}\to \bbR^{+}$ such that in a given
fibre $\Sigma:=p^{-1}([\Sigma])$, the area of each cusp end $N_{i}$ is strictly greater than
$a([\Sigma])$.  We can then define our global defining function on ${}^{\hc}\cT_{g,n}$ to
be 
\begin{equation}
   \rho(\sigma)= \rho_{\Sigma,a([\Sigma])}(\sigma) \quad \mbox \quad 
   \mbox{for} \quad 
\sigma\in \Sigma:=p^{-1}([\Sigma]), \quad [\Sigma]\in T_{g,n}
\label{con.19}\end{equation}
where $\rho_{\Sigma, \epsilon}:\Sigma_{\hc}\to \bbR$ is defined in \eqref{bc.10} for the
Riemann surface $\Sigma$ and a choice of small $\epsilon>0$.

\section{A local formula for the family index}
\label{lf.0}

The family of operators $\db_{\ell}\in \Psi^{1}(\cT_{g,n}/T_{g,n};
{}^{\hc}K^{\ell}_{v}, {}^{\hc}\Lambda^{0,1}_{v}\otimes {}^{\hc}K^{\ell}_{v})$ 
is a particular example of the families of $\fD$ operators considered in 
\cite{Albin-Rochon}.  When we apply this local index theorem to
our family $\db_{\ell}$ with the canonical connection ${}^{\hc}\cP$ for
the fibration ${}^{\hc}p: {}^{\hc}\cT_{g,n}\to T_{g,n}$, we get the family version
of \eqref{rr.8}, 
\begin{multline}
\Ch( \Ind(\db_{\ell}))= 
      \int_{\cT_{g,n}/T_{g,n}}\Ch(T^{-\ell}_{v}(\cT_{g,n}))\Td(T_{v}\cT_{g,n})
-\hat{\eta}(D^{V}_{\ell}) \\
 -\hat{\eta}(D^{H}_{\ell}) -\lrpar{\frac{1}{2\pi \sqrt{-1}}}^{\frac{N}{2}}d\int_{0}^{\infty} \Str\lrpar{ \frac{\pa \bbA^{t}_{D_{\ell}}}{\pa t} e^{-(\bbA^{t}_{D_{\ell}})^{2}}} dt,
\label{lf.1}\end{multline}
where the eta invariants of the vertical and horizontal families are replaced by the
the corresponding eta forms of Bismut and Cheeger \cite{BCO} (with non-standard $\bbZ_2$ grading for $D^H$).
This is an equality at the level of forms.  Notice that in \cite{Albin-Rochon} the first term is expressed in terms of the $\hat{A}$ form.  However,
thanks to Theorem 5.5 in \cite{Wolpert} and its reformulation in
equation 5.3 of \cite{Wolpert}, the fibration
$p:\cT_{g,n}\to T_{g,n}$ is K\"ahler fibration (see \cite{BGS2} for a defintion) so that it is possible to rewrite the first term using the Todd form instead.  In the last term, $\bbA^{t}_{D_{\ell}}$ is the
rescaled Bismut superconnection while $N$ is the number operator in $\Lambda T_{g,n}$, that is, the action of $N$ on forms of degree $k$ on $T_{g,n}$ is multiplication by $k$.  
\begin{remark}
In this paper, our convention for the Chern character differs from that of \cite{BGV}.
This is why we need to include these extra factors of $2\pi i$ in
the last term.  In principle, the eta forms would also require such factors, so really, by 
an eta form, we mean $(2\pi i)^{-\frac{N}{2}}$ times the eta form of Bismut and Cheeger
(\cf equation 4.101 in \cite{BCO}).  
\label{convention.1}\end{remark}
When we take the degree
zero part of \eqref{lf.1}, we get back the numerical index
\eqref{rr.8} by evaluating it at a given point $[\Sigma]\in T_{g,n}$.
In fact, as we have seen, the degree zero part of $\widehat{\eta}(D_{\ell}^{V})$ is identically zero, while 
the degree zero part of $\widehat{\eta}(D^{H}_{\ell})$ is $\frac{n}{2} \sign(\ell-\frac{1}{2})$.
However, the higher degree components of $\hat{\eta}(D^{H}_{\ell})$ 
vanish identically at the level of forms as we will see in a moment.

Let
$\pa\cT_{g,n}:=\rho^{-1}(0)$ be the union of the boundaries of the
fibres of ${}^{\hc}p: {}^{\hc}\cT_{g,n}\to T_{g,n}$.  The map ${}^{\hc}p$
induces a fibration structure
\begin{equation}
   \pa p: \pa \cT_{g,n}\to T_{g,n}
\label{cef.1}\end{equation}  
with typical fibre the disjoint union of $n$ circles.  In fact, the manifold
$\pa \cT_{g,n}$ has precisely $n$ components,
\begin{equation}
   \pa\cT_{g,n}= \bigcup_{i=1}^{n}\pa_{i}\cT_{g,n}
\label{cef.2b}\end{equation}
with $\pa_{i}\cT_{g,n}$ the component associated to the $i$th cusp.
There is a corresponding fibration structure
   \begin{equation}
\pa p_{i}: \pa_{i} \cT_{g,n}\to T_{g,n}.
\label{cef.2}\end{equation}
Recall that the vertical family $D^{V}_{\ell}$ decomposes as 
\begin{equation}
  D^{V}_{\ell} = \lrpar{ \begin{array}{cc}
                          0 & D^{V,-}_{\ell}  \\
                      D^{V,+}_{\ell} & 0
                       \end{array}}
\label{z2.1}\end{equation}
with respect to the $\bbZ_{2}$ grading of the Clifford bundle.

\begin{lemma}
The Chern form of $\ker D^{V,+}_{\ell} \to T_{g,n}$ vanishes in positive degrees,
\[
         \Ch(\ker D^{V,+}_{\ell})_{[2k]}=0, \quad k\in \bbN.
\]
\label{lf.108}\end{lemma}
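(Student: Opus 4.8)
The plan is to identify the bundle $\ker D^{V,+}_\ell \to T_{g,n}$ explicitly as a direct sum of line bundles over $T_{g,n}$ and show each carries a flat connection, or more directly, that the natural connection induced on $\ker D^{V,+}_\ell$ by the canonical connection ${}^{\hc}\cP$ (restricted to the boundary fibration $\pa p_i$) has vanishing curvature. Recall from Proposition~\ref{fp.2} and the discussion around \eqref{fp.1} that on each circular boundary component the vertical operator $D^{V}_\ell$ is the constant-coefficient operator $c(du)\frac{\pa}{\pa u}$ acting on sections of ${}^{\hc}K^\ell \oplus {}^{\hc}\Lambda^{0,1}_\Sigma \otimes {}^{\hc}K^\ell$ over $\bbR/\bbZ$. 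First I would write out the Fourier decomposition of this operator in the coordinate $u$: the kernel of $D^{V,+}_\ell$ is spanned by the zero modes, i.e. the sections that are constant in $u$. Over the $i$th boundary component this produces a one-dimensional space, and globally over $T_{g,n}$ these assemble into the line bundle $\cL_i$ (or a fixed tensor power thereof) defined in the introduction as the restriction of $K_v$ to the $i$th marked point. Thus $\ker D^{V,+}_\ell \cong \bigoplus_{i=1}^n \cL_i^{\otimes a}$ for the appropriate exponent $a$ depending on $\ell$.

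The key step is then to compute $\Ch$ of this bundle. The cleanest route is: the boundary defining function $\rho$ and the metric $g_\Sigma$ are constructed (via \eqref{bc.10}–\eqref{bc.11} and \eqref{con.19}) so that near each puncture everything is modeled on the fixed horn \eqref{bc.5}, with the identification carried out by the solution $f^\mu$ of the Beltrami equation, which is \emph{asymptotically holomorphic} at each puncture. Consequently, in the Bers trivialization \eqref{con.8}, the zero-mode subbundle is identified with a \emph{fixed} vector space independent of the parameter $\varepsilon$, and the induced covariant derivative from ${}^{\hc}\cP$ (computed as in \eqref{con.13} or \eqref{con.17}) acts trivially on it. Hence the connection on $\ker D^{V,+}_\ell$ is flat, so all its Chern forms vanish in positive degree. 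Alternatively, and perhaps more transparently, one can invoke that $T_{g,n}$ is contractible so the topological Chern classes vanish, and then note that since $\Ch(\ker D^{V,+}_\ell)$ is being computed at the level of \emph{forms} via an explicitly parallel trivialization, it vanishes as a form and not merely in cohomology — the latter is what the lemma actually asserts and what is needed to kill the higher-degree part of $\hat\eta(D^H_\ell)$ in \eqref{lf.1}.

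I expect the main obstacle to be making rigorous the claim that the identification of the zero-mode subbundle with a fixed space is compatible with the connection ${}^{\hc}\cP$ — that is, checking that the asymptotic holomorphicity of $f^\mu$ near the punctures is strong enough that differentiating \eqref{con.12} in $\varepsilon$ and restricting to the boundary $\rho = 0$ genuinely annihilates the constant ($u$-independent) modes. This requires controlling the boundary behavior of $\frac{\pa}{\pa\varepsilon_i} \wt\omega$ and in particular knowing that the deformation vector fields (the harmonic Beltrami differentials $\mu_j$) decay exponentially at the cusps, a fact recalled in Section~\ref{con.0} below equation \eqref{con.5}. Once that decay is in hand, the restriction to $\pa\cT_{g,n}$ of the covariant derivative of a zero-mode section is again a zero-mode section with vanishing $\varepsilon$-derivative, giving flatness; the rest is the standard Chern–Weil statement that a flat bundle has vanishing Chern forms in positive degree.
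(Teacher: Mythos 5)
Your proposal is correct and follows essentially the same route as the paper: the paper decomposes $\ker D^{V,+}_{\ell}=\bigoplus_{i}\ker D^{V,+}_{\ell,i}$ into line bundles, exhibits the explicit trivializing section $s_{\ell,i}\colon[\Sigma]\mapsto\bigl(\rho\,dx-i\tfrac{d\rho}{\rho}\bigr)^{\ell}\big|_{(\pa\Sigma_{\hc})_{i}}$, and proves it is parallel for the induced connection precisely because the harmonic Beltrami differentials decay exponentially at the cusps, so that $f^{\mu}$ is asymptotically holomorphic and (since $\nabla^{\cP}dg_{\cT_{g,n}/T_{g,n}}=0$) parallel transport is asymptotically an isometry --- this is exactly your flatness argument in the Bers trivialization. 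One caution: your aside identifying $\ker D^{V,+}_{\ell,i}$ with a tensor power of $\cL_{i}$ should not be taken as an identification of bundles \emph{with connection}, since $\cL_{i}$ with its canonical (Wolpert) connection has curvature $-2\pi\sqrt{-1}\,e_{i}\neq 0$ while the kernel bundle with the connection induced by $\cP$ is flat; had you computed $\Ch$ through that identification you would have gotten the wrong (nonzero) answer, so it is essential that, as you in fact do, the curvature be computed directly on the zero-mode bundle.
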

\begin{proof}
Let $D_{\ell,i}^{V}$ be the vertical family of the $i$th component
$\pa_{i} \cT_{g,n}$ of  $\pa\cT_{g,n}$.
Via the identification
\[
-c(\frac{d\rho}{\rho}): \,{}^{\hc}\Lambda_{v}^{0,1}\otimes{}^{\hc}K_{v}
 \longrightarrow{}^{\hc}K_{v} 
\] 
given by Clifford multiplication,
the operator $D^{V,+}_{\ell}$ can be identified with
\begin{equation}
    \frac{1}{i} \nabla_{\frac{\pa}{\pa x}}= i\nabla_{\frac{\pa}{\pa u}}:  
  \CI(\bbR/\bbZ; {}^{\hc}K^{\ell}_{v})\to \CI(\bbR/\bbZ; {}^{\hc}K^{\ell}_{v})
\label{rr.9b}\end{equation}
where $u=-x$ is such that $\frac{\pa}{\pa u}$ is an oriented orthonormal basis
of $T_{\sigma}(\pa_{i}\cT_{g,n}/T_{g,n})$ for each $\sigma\in \pa_{i}\cT_{g,n}$.
Thus, 
$\ker D_{\ell,i}^{V,+}\to T_{g,n}$ defines a complex line bundle over
the Teichm\"uller space $T_{g,n}$ and 
\begin{equation}
  \ker D_{\ell}^{V,+}= \bigoplus_{i=1}^{n} \ker D^{V,+}_{\ell,i}.
\label{lf.4}\end{equation}
For the corresponding Chern characters, this gives
\begin{equation}
  \Ch(\ker D_{\ell}^{V,+})= \sum_{i=1}^{n} \Ch(\ker D_{\ell,i}^{V,+}).
\label{lf.5}\end{equation}
To prove the lemma, it therefore suffices to show that 
$\Ch(\ker D^{V,+}_{\ell,i})_{[2]}=0$ for $i\in\{1,\ldots,n\}$.  This will
be true provided we can trivialize $\ker D^{V,+}_{\ell,i}$ by a parallel 
section.  From the identification of $D^{V,+}_{\ell,i}$ with
\eqref{rr.9b}, a choice of trivializing section is given by taking
\begin{equation}
s_{\ell,i}:[\Sigma]\mapsto \left.\left(\rho dx -i \frac{d\rho}{\rho}\right)^{\ell}
\right|_{(\pa \Sigma_{\hc})_{i}} \in \left.{}^{\hc}K_{v}^{\ell}
\right|_{(\pa \Sigma_{\hc})_{i}}
\label{lf.6}\end{equation} 
where $\Sigma= p^{-1}([\Sigma])$ and $\rho$ is the boundary defining 
function of \eqref{con.19}.  Notice that the section \eqref{lf.6}
is completely determined by the canonical family of hyperbolic
metrics $g_{\cT_{g,n}/T_{g,n}}$.  Conversely, for $\ell=1$,
the section $s_{1,i}$ completely determines the asymptotic behavior
of $g_{\cT_{g,n}/T_{g,n}}$ as one approaches the $i^{\text{th}}$ puncture.
If the family of metrics $g_{\cT_{g,n}/T_{g,n}}$ were parallel with 
respect to the canonical connection $\cP$, we could conclude immediately that
the section $s_{\ell,i}$ is parallel.  This is not the case, but at
least the family of metrics $g_{\cT_{g,n}/T_{g,n}}$ is asymptotically 
parallel as one approaches a puncture.  Indeed, from \eqref{con.18}, we
see that the parallel transport (along a path on $T_{g,n}$)  
defined by the canonical connection $\cP$ is asymptotically holomorphic
as one approaches a puncture.  This is because the Beltrami 
differential $\mu$ in \eqref{con.7} vanishes exponentially fast as one
approaches a puncture (using the coordinates of \eqref{bc.7}).  Thus,
parallel transport is asymptotically a conformal transformation for the
family of metrics  $g_{\cT_{g,n}/T_{g,n}}$.  Since
\begin{equation}
            \nabla^{\cP} dg_{\cT_{g,n}/T_{g,n}}=0,
\label{lf.7}\end{equation}
this means that the parallel transport defined by the connection $\cP$ is
asymptotically an isometry as one approaches a puncture.  That is,
$\nabla^{\cP} g_{\cT_{g,n}/T_{g,n}}$ is asymptotically zero as one 
approaches a puncture.  In particular, this implies that for each
$i\in \{1,\ldots,n\}$, the section $s_{\ell,i}$ of \eqref{lf.6} is 
parallel with respect to the connection $\cP$.

\end{proof}

Together with the boundary defining function $\rho$, the family
of metric $g_{\cT_{g,n}/T_{g,n}}$ induces a natural family of 
metrics  $g_{i}$ for each fibre of the fibration \eqref{cef.2}
in such
a way that each fibre becomes isometric to the circle $\bbS^{1}:=\bbR/\bbZ$ of
length $1$ (\cf \cite{Wolpert2}).  With these identifications, we get a natural action of 
$\bbS^{1}$ on each fibre, giving \eqref{cef.2} the structure of a 
principal $\bbS^{1}$-bundle.  By construction, the family of metrics $g_{i}$
is $\bbS^{1}$-equivariant with respect to the $\bbS^{1}$ action.
The canonical connection ${}^{\hc}\cP$ naturally
induces a connection $\cP_{i}$ on \eqref{cef.2}.
\begin{lemma}
The family of metrics $g_{i}$ is parallel with respect to the connection
$\cP_{i}$, that is, the connection $\cP_{i}$ is unitary with respect to the
metric $g_{i}$.  In particular, on the $i$th circular boundary component, the
vector field $\frac{\pa}{\pa u}$ is parallel with respect to the connection $\mathcal{P}_{i}$.  
\label{cef.3}\end{lemma}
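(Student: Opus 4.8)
The plan is to reduce the statement to an asymptotic version of the argument already carried out in the proof of Lemma~\ref{lf.108}. Recall that the family of metrics $g_i$ on the fibres of $\pa p_i\colon \pa_i\cT_{g,n}\to T_{g,n}$ is constructed directly from $g_{\cT_{g,n}/T_{g,n}}$ together with the boundary defining function $\rho$ of \eqref{con.19}: on the $i$th cusp the vertical tangent vector $\frac{\pa}{\pa u}$ is, up to the scaling by $\rho$ built into the ${}^{\hc}$-structure, the unit vector for $g_{\Sigma}$, and this rescaling is precisely what makes each fibre isometric to $\bbS^1=\bbR/\bbZ$. So the statement "$g_i$ is parallel for $\cP_i$" is equivalent to "$\frac{\pa}{\pa u}$ is parallel for $\cP_i$", and either one follows once we know that parallel transport for the induced connection on \eqref{cef.2} preserves the metric $g_i$.

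First I would unwind the definitions: $\cP_i$ is induced from ${}^{\hc}\cP$, and ${}^{\hc}\cP$ in turn is built (as recalled in \S\ref{con.0}) from Bers coordinates via the solutions $f^\mu$ of the Beltrami equation \eqref{con.7}. The key input is the same fact used in Lemma~\ref{lf.108}: the Beltrami differential $\mu$ associated to any tangent vector of $T_{g,n}$ vanishes exponentially fast as one approaches a puncture (elements of $\Omega^{-1,1}(\Sigma)$ decay exponentially in the coordinates of \eqref{bc.7}), so that $f^\mu$ is asymptotically holomorphic, hence asymptotically conformal, near each puncture. Therefore the parallel transport defined by ${}^{\hc}\cP$, restricted to the boundary $\pa_i\cT_{g,n}$, is a conformal transformation of the circle. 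But a conformal self-map of a circle equipped with a fixed length-one metric $g_i$, which additionally respects the $\bbS^1$-equivariance, must be an isometry. Combined with $\nabla^{\cP}\,dg_{\cT_{g,n}/T_{g,n}}=0$ from \eqref{lf.7} (which fixes the total area, i.e. the length of the circle), this forces the parallel transport on $\pa_i\cT_{g,n}$ to preserve $g_i$ exactly, not merely asymptotically. Equivalently, $\nabla^{\cP_i} g_i = 0$.

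Once this is established, the statement about $\frac{\pa}{\pa u}$ is immediate: on each circular fibre $\frac{\pa}{\pa u}$ is the oriented unit-length vector field for $g_i$ (it is the generator of the $\bbS^1$-action normalized by the metric), and a parallel metric together with a parallel orientation pins down the unit vector field up to nothing, so $\frac{\pa}{\pa u}$ is $\cP_i$-parallel. Alternatively, one can argue via the identification of $D^{V,+}_{\ell,i}$ with \eqref{rr.9b} and the parallel section $s_{\ell,i}$ of \eqref{lf.6} constructed in Lemma~\ref{lf.108}: the section $s_{1,i}$ is $\rho dx - i\,d\rho/\rho = -(\rho\,du + i\,d\rho/\rho)$ restricted to the boundary, which is dual (under the ${}^{\hc}$-metric) to a constant multiple of $\frac{\pa}{\pa u} - i\rho\frac{\pa}{\pa\rho}$, and its boundary restriction recovers $\frac{\pa}{\pa u}$; since $s_{1,i}$ is $\cP$-parallel, so is $\frac{\pa}{\pa u}$.

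The main obstacle is upgrading "asymptotically parallel" to "exactly parallel" for the metrics $g_i$ themselves (as opposed to $g_{\cT_{g,n}/T_{g,n}}$, where only the asymptotic statement holds). The point to get right is that after passing to the boundary $\pa_i\cT_{g,n}=\rho^{-1}(0)$, all the exponentially small error terms genuinely vanish — there is no remaining freedom because a conformal map of $\bbS^1$ that preserves the $\bbS^1$-action and the total length is a rotation, hence an isometry — so the restriction to the boundary of the $\cP$-parallel transport is exactly an isometry of $(\bbS^1, g_i)$. Making this last step precise, by carefully tracking how the ${}^{\hc}$-structure and the factor $\rho$ interact with the restriction to $\rho^{-1}(0)$, is the only delicate part; everything else is bookkeeping with the definitions of \S\ref{con.0} and the decay estimate already invoked in Lemma~\ref{lf.108}.
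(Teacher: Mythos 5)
Your proposal is correct and follows essentially the same route as the paper: the paper's proof is a one-line citation of the proof of Lemma~\ref{lf.108}, namely that $g_{\cT_{g,n}/T_{g,n}}$ is asymptotically parallel as one approaches a cusp (via the exponential decay of harmonic Beltrami differentials and $\nabla^{\cP}dg_{\cT_{g,n}/T_{g,n}}=0$), so that the induced statement on the boundary $\rho^{-1}(0)$ holds exactly. Your elaboration of why the asymptotic statement becomes exact on the boundary, and the alternative argument via the parallel section $s_{1,i}$, are consistent with what the paper intends.
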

\begin{proof}
By the proof of lemma~\ref{lf.108}, the family of metrics 
$g_{\cT_{g,n}/T_{g,n}}$ 
is asymptotically parallel as one approaches a cusp, from which the result follows.
\end{proof}
We can now show that the eta form of $D^{H}_{\ell}$ vanishes in positive degrees.

\begin{lemma}
For each $k\in \bbN$, the degree $2k$ part of the form
$\widehat{\eta}(D^{H}_{\ell})$ vanishes identically,
\[
            \widehat{\eta}(D^{H}_{\ell})_{[2k]}=0, \quad 
k>0.
\]
\label{lf.3}\end{lemma}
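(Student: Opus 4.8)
The strategy is to compute $D^H_\ell$ explicitly as a geometric operator on the principal $\bbS^1$-bundle $\pa_i p_i\colon \pa_i\cT_{g,n}\to T_{g,n}$ and to exploit the fact, established in Lemma~\ref{cef.3}, that this bundle carries a connection $\cP_i$ for which the fibrewise metric $g_i$ is parallel. By Proposition~\ref{fp.2}, on each component $\pa_i\cT_{g,n}$ the horizontal family is simply $D^H_{\ell}=\lrpar{\ell-\frac12}\,ic(du)$, a fixed constant multiple of an endomorphism of the finite-rank bundle $\mathcal K=\ker D^V_\ell$. Since $\tfrac{\pa}{\pa u}$ is parallel with respect to $\cP_i$ (the last sentence of Lemma~\ref{cef.3}), the Clifford endomorphism $c(du)$ acting on $\ker D^V_\ell$ is a \emph{parallel} endomorphism, so $D^H_\ell$ itself is parallel with respect to the connection induced by $\cP_i$ on $\mathcal K\to T_{g,n}$. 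Concretely I would decompose $\mathcal K=\bigoplus_{i=1}^n\mathcal K_i$ as in \eqref{lf.4}, with $\mathcal K_i$ trivialized by the parallel section $s_{\ell,i}$ of \eqref{lf.6} (and its partner under $c(d\rho/\rho)$), and observe that in this parallel frame $D^H_\ell$ has \emph{constant} matrix entries.

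Next I would recall the definition of the Bismut--Cheeger eta form $\widehat\eta(D^H_\ell)$. It is built from the rescaled superconnection heat operator $\Str\bigl(\tfrac{\pa \bbB^t}{\pa t}e^{-(\bbB^t)^2}\bigr)$ for the family $D^H_\ell$, integrated in $t$; the superconnection is of the form $\bbB^t=\sqrt t\,D^H_\ell+\nabla^{\mathcal K}+\tfrac1{\sqrt t}(\text{curvature terms})$, where $\nabla^{\mathcal K}$ is the connection on $\mathcal K$ induced by $\cP_i$. The key point is that, because $D^H_\ell$ is $\nabla^{\mathcal K}$-parallel and (being $\ell-\tfrac12$ times a constant Clifford matrix in a parallel frame) has vanishing higher-order "curvature" contributions, every term of positive form-degree in $\tfrac{\pa \bbB^t}{\pa t}e^{-(\bbB^t)^2}$ either involves $\nabla^{\mathcal K}D^H_\ell=0$ or a commutator $[\nabla^{\mathcal K},D^H_\ell]=0$. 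Thus the supertrace reduces to its degree-zero part, and the integrand has no positive-degree component; integrating over $t$ then gives $\widehat\eta(D^H_\ell)_{[2k]}=0$ for $k>0$, while $\widehat\eta(D^H_\ell)_{[0]}=\tfrac12\eta(D^H_\ell)=\tfrac n2\sign(\ell-\tfrac12)$ as already noted in \eqref{fp.18}.

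The main obstacle is bookkeeping the superconnection heat-form computation carefully enough to be sure that \emph{all} positive-degree terms vanish, rather than merely the degree-two term — one must check that the curvature $(\cP_i)^2$ of the circle bundle does not re-enter through the heat operator in a way that survives the supertrace. The clean way to organize this is to note that the whole family $(\mathcal K_i,\nabla^{\mathcal K},D^H_\ell)$ is pulled back from a point: $D^H_\ell$ is literally a constant operator, and $\nabla^{\mathcal K}$ is flat on each line summand $\ker D^{V,+}_{\ell,i}$ by the parallel trivialization $s_{\ell,i}$ (this is exactly Lemma~\ref{lf.108}). Hence $\bbB^t$ reduces to $\sqrt t\,D^H_\ell+d$ on a trivial bundle, $e^{-(\bbB^t)^2}=e^{-t(D^H_\ell)^2}$ has no form content, and $\tfrac{\pa\bbB^t}{\pa t}=\tfrac1{2\sqrt t}D^H_\ell$ is degree zero; so $\Str\bigl(\tfrac{\pa\bbB^t}{\pa t}e^{-(\bbB^t)^2}\bigr)$ is a $0$-form in $t$ and the eta form is concentrated in degree zero. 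This makes the vanishing in positive degrees immediate once Lemmas~\ref{lf.108} and~\ref{cef.3} are in hand.
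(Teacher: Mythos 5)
Your proof is correct and rests on exactly the same ingredients as the paper's: Proposition~\ref{fp.2}, Lemma~\ref{cef.3}, and Lemma~\ref{lf.108}. The only cosmetic difference is that you unwind the superconnection heat form directly to see that it is concentrated in degree zero, whereas the paper shortcuts this by quoting the closed formula $\widehat{\eta}(D^{H}_{\ell})=\frac{1}{2}\sign(\ell-\frac12)\Ch(\ker D^{V,+}_{\ell})$ from \cite{Albin-Rochon} and then applying Lemma~\ref{lf.108}.
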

\begin{proof}
Since $D^{H}_{\ell}$ is just an endomorphism of $\ker D^{V}_{\ell}$, we see from proposition~\ref{fp.2}, lemma~\ref{cef.3} and the definition of the eta form that (see \cite[(4.12)]{Albin-Rochon})
\begin{equation}
   \widehat{\eta}(D^{H}_{\ell})= \frac{1}{2}\sign\lrpar{\ell-\frac{1}{2}} \Ch(\ker D^{V,+}_{\ell}).
\label{etah}\end{equation} 
The result then follows from lemma~\ref{lf.108}.  
\end{proof}

On the other hand, the eta form of the vertical family gives a  contribution in higher degrees.  In fact, since the geometry of the boundary fibration is very special, it is
possible to compute the eta form explicitly.  
With respect to the decomposition \eqref{cef.2b}, the vertical family 
$D_{\ell}^{V}$ admits a corresponding decomposition
\begin{equation}
   D_{\ell}^{V}= \bigoplus_{i=1}^{n} D_{\ell,i}^{V}
\label{cef.4}\end{equation}
where $D^{V}_{\ell,i}$ is a family of self-adjoint Dirac operators on
the fibration \eqref{cef.2}.  In terms of this decomposition, the eta form
of $D^{V}_{\ell}$ can be expressed  as
\begin{equation}
   \widehat{\eta}(D^{V}_{\ell})= \sum_{i=1}^{n} \widehat{\eta}(D^{V}_{\ell,i}).
\label{cef.5}\end{equation}
By \eqref{con.18} (see also the proof of lemma~\ref{qc.10}), the family of
Dirac operators $D_{\ell}$ is asymptotically parallel with respect to the
canonical connection $\cP$ as one approaches a cusp.  This means that each
of the vertical families $D^{V}_{\ell,i}$ is parallel with respect to the 
connection $\cP_{i}$ on \eqref{cef.2}.  This fact, together with the fact
the family of metric $g_{i}$ is parallel with respect to the connection
$\cP_{i}$ and is equivariant with respect to the circle action,  means that
we can 
apply the result of Zhang (Theorem 1.7 in \cite{Zhang}) to get an explicit formula for the eta form
$\eta(D^{V}_{\ell,i})$.   
\begin{proposition}[Zhang, \cite{Zhang}, Theorem 1.7]
The eta form of $D^{V}_{\ell,i}$ is given by 
\[
     \widehat{\eta}(D^{V}_{\ell,i})= \frac{1}{2\tanh \lrpar{\frac{e_{i}}{2}}}
-\frac{1}{e_{i}}
\]
where $e_{i}:= \frac{\sqrt{-1}}{2\pi}\Theta_{i}$ is the curvature form of the circle bundle 
$\pa p_{i}: \pa_{i}\cT_{g,n}\to T_{g,n}$
with connection $\cP_{i}$ and curvature $\Theta_{i}$, the Lie algebra of $\bbS^{1}$ being
identified with $i\bbR$. 
\label{cef.6}\end{proposition}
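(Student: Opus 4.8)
The plan is to reduce the computation of $\widehat\eta(D^V_{\ell,i})$ to a model case on a circle bundle and then invoke Zhang's explicit formula. First I would make precise the geometric setup that Zhang's Theorem 1.7 requires: a fibration by circles $\pa p_i\colon \pa_i\cT_{g,n}\to T_{g,n}$ carrying an $\bbS^1$-action, a fibrewise metric $g_i$ that is invariant under this action, a connection $\cP_i$ that is unitary for $g_i$, and a Dirac operator along the fibres which is equivariant and parallel with respect to $\cP_i$. The unitarity of $\cP_i$ and the parallelism of $\pa/\pa u$ are exactly Lemma~\ref{cef.3}. The parallelism of the vertical family $D^V_{\ell,i}$ with respect to $\cP_i$ follows from the discussion just before the Proposition, which rests on \eqref{con.18} and the exponential decay of the Beltrami differential near a cusp (as in the proof of Lemma~\ref{lf.108}); I would state this as the key input and refer forward to the proof of Lemma~\ref{qc.10} for the detailed asymptotic estimate.

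Next I would identify the operator. By the discussion around \eqref{rr.9b}, after the Clifford identification $-c(d\rho/\rho)\colon {}^{\hc}\Lambda^{0,1}_v\otimes{}^{\hc}K^\ell_v\to {}^{\hc}K^\ell_v$, the relevant piece $D^{V,+}_{\ell,i}$ is $i\nabla_{\pa/\pa u}$ acting on sections of the line bundle ${}^{\hc}K^\ell_v$ restricted to the $i$th circle, and the full vertical family $D^V_{\ell,i}$ is the corresponding self-adjoint Dirac operator on the circle bundle twisted by ${}^{\hc}K^\ell_v$. The point is that on the $i$th boundary circle this twisting line bundle is, up to the choice of boundary defining function $\rho$, the bundle $\cL_i\to T_{g,n}$ whose Chern form (in the sense of Wolpert \cite{Wolpert2}) is $e_i$. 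So Zhang's formula for the eta form of a Dirac operator on a circle bundle, twisted by a line bundle with curvature $2\pi\sqrt{-1}\,e_i$ (with the conventions of Remark~\ref{convention.1} absorbed into the normalization), specializes precisely to $\tfrac{1}{2\tanh(e_i/2)} - \tfrac1{e_i}$; here the $1/e_i$ term is the subtraction of the pole that makes the expression a genuine differential form, matching the fact that the untwisted (numerical) eta invariant of $i\,d/du$ on the circle vanishes, as computed in \eqref{rr.10}.

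The main obstacle I anticipate is bookkeeping rather than conceptual: matching the precise normalization of the curvature $\Theta_i$ and of $e_i$ appearing in Zhang's theorem with the conventions fixed in this paper (the factors of $2\pi\sqrt{-1}$ flagged in Remark~\ref{convention.1}, and Wolpert's definition of the Chern form in \cite{Wolpert2}), and verifying that the $\bbZ_2$-grading on which $D^V_\ell$ decomposes as in \eqref{z2.1} is the one Zhang uses, so that the sign and the role of $D^{V,+}$ versus $D^{V,-}$ come out correctly. I would also need to check that the identification of the $i$th fibre with $\bbR/\bbZ$ of length $1$ via $g_i$, together with the $\bbS^1$-equivariance, is exactly the hypothesis of Theorem 1.7 and not merely a close variant; granting that, the proof is essentially a citation: assemble Lemma~\ref{cef.3}, the parallelism of $D^V_{\ell,i}$, and the identification \eqref{rr.9b}, and read off the formula from \cite[Theorem 1.7]{Zhang}.
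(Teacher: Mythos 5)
Your proposal matches the paper's treatment: the proposition is proved by citation to Zhang's Theorem 1.7, with the surrounding text (as you do) verifying its hypotheses — the parallelism of $D^{V}_{\ell,i}$ with respect to $\cP_{i}$ via \eqref{con.18} and the decay of Beltrami differentials, together with Lemma~\ref{cef.3} on the unitarity and $\bbS^{1}$-equivariance of the fibrewise metrics $g_{i}$. The normalization bookkeeping you flag is real but is handled in the paper only implicitly (via Remark~\ref{convention.1} and the identification of $e_{i}$ with Wolpert's Chern form), so no further argument is needed.
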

\begin{remark}
Notice in particular that this implies that the eta form is zero in degree $2k$ for 
$k=0 $ modulo 2.  Moreover, it is a closed form, an unusual feature for a
eta form.  
\label{cef.11}\end{remark}

Before stating our main theorem, let us give an alternate description
of the Chern form $e_{i}$.  Namely, to the circle bundle
\eqref{cef.2} with connection $\cP_{i}$ and family of metrics $(2\pi)g_{i}$,
we can associate in a canonical way a complex line bundle $\cL_{i}\to T_{g,n}$
equipped with a Hermitian metric $h_{i}$ and a unitary connection
$\nabla^{\cL_{i}}$ in such a way that the curvature form of $\cL_{i}$ is
precisely $(-2\pi\sqrt{-1})e_{i}$.  The line bundle $\cL_{i}$ is such that
its unit circle bundle with induced metric and connection is precisely
the circle bundle \eqref{cef.2} with family of metrics $2\pi g_{i}$
and connection $\cP_{i}$.

Thinking of a fibre $\Sigma:=p^{-1}([\Sigma])$ as a punctured Riemann surface
\begin{equation}
  \Sigma= \overline{\Sigma}-\{x_{1},\ldots,x_{n}\},
\label{cef.7}\end{equation}  
one can also define the line bundle $\cL_{i}$ by
\begin{equation}
   \cL_{i,[\Sigma]}:= (T^{1,0}_{x_{i}}\overline{\Sigma})^{*}= 
\left.K_{\overline{\Sigma}} \right|_{x_{i}}, \quad [\Sigma]\in T_{g,n}.
\label{cef.8}\end{equation}
Moreover, from this perspective, the Hermitian metric $h_{i}$ and the 
unitary connection $\nabla^{\cL_{i}}$ are easily seen to be the same as the
one introduced by Wolpert \cite{Wolpert2}.  Thus, the form $e_{i}$ corresponds
to the Chern form $c_{1}(\|\; \|_{can,i})$ of Corollary 7 in \cite{Wolpert2}.

Now, combining \eqref{lf.1} with Lemma~\ref{lf.3} and Proposition~\ref{cef.6},
we obtain the following formula.
\begin{theorem}
The local family index of the family of operators 
\[
D_{\ell}^{+}:=\sqrt{2} \ \db_{\ell}\in \rho^{-1}\Psi^{1}_{\cusp}(\cT_{g,n}/T_{g,n}; 
       {}^{\hc}K^{\ell}_{v}, {}^{\hc}\Lambda^{0,1}_{v}\otimes 
           {}^{\hc}K^{\ell}_{v})
\]
associated to the Teichm\"uller universal curve 
$p:\cT_{g,n}\to T_{g,n}$ and its canonical connection $\cP$ is given by
\begin{multline}
\Ch(\ind\ker D^{+}_{\ell})= 
      \int_{\cT_{g,n}/T_{g,n}}\Ch(T^{-\ell}_{v}(\cT_{g,n}))\Td(T_{v}\cT_{g,n})
+\frac{n}{2}\sign \lrpar{\frac{1}{2}-\ell} \\
-\sum_{i=1}^{n}
\lrpar{ \frac{1}{2\tanh \lrpar{\frac{e_{i}}{2}}}-\frac{1}{e_{i}}}  -
\lrpar{\frac{1}{2\pi\sqrt{-1}}}^{\frac{N}{2}}
d\int_{0}^{\infty} \Str\lrpar{ \frac{\pa \bbA^{t}_{D_{\ell}}}{\pa t} e^{-(\bbA^{t}_{D_{\ell}})^{2}}} dt,
\label{lf.10b}\end{multline}
where $\bbA^{t}_{D_{\ell}}$ is the rescaled Bismut superconnection associated to the family 
$D_{\ell}$, $e_{i}$ is the canonical Chern form of the 
(holomorphic) cotangent bundle along the $i^{\text{th}}$ cusp $\cL_{i}\to T_{g,n}$ and
$N$ is the number operator on $\Lambda T_{g,n}$.
\label{lf.8}\end{theorem}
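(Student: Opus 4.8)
The plan is to substitute into the family index formula \eqref{lf.1} the explicit values of the horizontal and vertical eta forms established in the preceding lemmas, and then to rewrite the leading term using the Todd class. First I would check that \eqref{lf.1} genuinely applies to $D_\ell^+ = \sqrt{2}\,\db_\ell$: by the discussion of \S\ref{rr.0} this is a family of Dirac-type hyperbolic cusp operators in the sense of \cite{Albin-Rochon}, by Corollary~\ref{fp.3} each member is Fredholm, and by \eqref{rr.24}--\eqref{rr.25} the ranks of $\ker\db_\ell$ and $\ker\db_\ell^{*}$ are locally constant on $T_{g,n}$, so the constant-rank hypothesis holds and the local family index theorem of \cite{Albin-Rochon} yields \eqref{lf.1} as an identity of forms on $T_{g,n}$ (with $\Ch(\ind\ker D^{+}_{\ell})=\Ch(\Ind(\db_\ell))$ since rescaling does not change the index).

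Next I would evaluate the two boundary terms. For the horizontal family, Lemma~\ref{lf.3} shows that $\hat{\eta}(D^{H}_{\ell})$ has no component in positive degree, so it reduces to its degree-zero part, which by \eqref{fp.18} equals $\tfrac{n}{2}\sign\lrpar{\ell-\tfrac12}$; hence $-\hat{\eta}(D^{H}_{\ell}) = \tfrac{n}{2}\sign\lrpar{\tfrac12-\ell}$, the second term of \eqref{lf.10b}. For the vertical family, I would use the orthogonal decomposition \eqref{cef.4} of $D^{V}_{\ell}$ over the $n$ components of $\pa\cT_{g,n}$ together with the additivity \eqref{cef.5} to write $\hat{\eta}(D^{V}_{\ell})=\sum_{i=1}^{n}\hat{\eta}(D^{V}_{\ell,i})$. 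Each $D^{V}_{\ell,i}$ is a family of Dirac operators on the circle bundle $\pa p_i:\pa_i\cT_{g,n}\to T_{g,n}$; as recorded before Proposition~\ref{cef.6}, this family is parallel for the induced connection $\cP_i$ by the asymptotic-parallelism argument in the proof of Lemma~\ref{lf.108}, while the fibrewise metric $g_i$ is $\cP_i$-parallel and $\bbS^{1}$-equivariant by Lemma~\ref{cef.3}. These are exactly the hypotheses of Zhang's Theorem~1.7, so Proposition~\ref{cef.6} gives $\hat{\eta}(D^{V}_{\ell,i}) = \frac{1}{2\tanh\lrpar{e_i/2}} - \frac{1}{e_i}$, and summing over $i$ produces the third term of \eqref{lf.10b}.

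Substituting all of this into \eqref{lf.1} yields \eqref{lf.10b} up to one cosmetic point: the Atiyah-Singer integrand in \cite{Albin-Rochon} is written with the $\hat{A}$-form. To conclude I would invoke the fact, due to Wolpert (Theorem~5.5 and equation~5.3 of \cite{Wolpert}), that $p:\cT_{g,n}\to T_{g,n}$ is a K\"ahler fibration; this licenses the standard rewriting of the Getzler integrand for the twisted $\db$-complex in terms of $\Ch(T^{-\ell}_{v}\cT_{g,n})$ and $\Td(T_{v}\cT_{g,n})$, exactly as in the fibrewise computation \eqref{rr.12}. The final term of \eqref{lf.1} is carried over verbatim.

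The one step with genuine content — everything else being bookkeeping on top of the black boxes \eqref{lf.1} and Zhang's theorem — is the verification, already carried out in Lemmas~\ref{lf.108} and \ref{cef.3}, that the boundary geometry (Dirac family, metric, and connection along each cusp) is asymptotically parallel; this is what both kills the higher-degree part of $\hat{\eta}(D^{H}_{\ell})$ and forces $\hat{\eta}(D^{V}_{\ell,i})$ into the closed form of Proposition~\ref{cef.6}.
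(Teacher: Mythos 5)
Your proposal is correct and follows essentially the same route as the paper: the theorem is obtained by substituting Lemma~\ref{lf.3} (which reduces $\hat\eta(D^H_\ell)$ to its degree-zero part $\tfrac{n}{2}\sign(\ell-\tfrac12)$) and Proposition~\ref{cef.6} (Zhang's formula for each $\hat\eta(D^V_{\ell,i})$) into the general family index formula \eqref{lf.1}, with the $\hat A$-form rewritten via the Todd form using Wolpert's K\"ahler fibration result. Your additional checks of the hypotheses of \eqref{lf.1} and of Zhang's theorem are exactly the content of the preparatory lemmas the paper cites.
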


As in \cite{TZ}, each of the terms in our formula is invariant under the action of the
Teichm\"uller modular group $\Mod_{g,n}$.  Thus, formula \eqref{lf.10b} also
holds on the moduli space $\mathcal{M}_{g,n}:= T_{g,n}/\Mod_{g,n}$ in the 
sense of orbifolds with the fibration $p:\cT_{g,n}\to T_{g,n}$ replaced by the
forgetful map $\pi_{n+1}:\cM_{g,n+1}\to \cM_{g,n}$.  In fact, on the moduli space
$\mathcal{M}_{g,n}$, the formula acquires a topological meaning in higher degrees.

To see this, define $\overline{\cT}_{g,n}$ to be the space obtained from $\cT_{g,n}$ by filling each
puncture of each fibre by a marked point.  There is still a fibration
$\overline{p}: \overline{\cT}_{g,n}\to T_{g,n}$, but now with fibres being compact 
Riemann surfaces of genus $g$ with $n$ marked points.  Let $\overline{K}_{v}\to \cT_{g,n}$
denote the corresponding vertical canonical line bundle (the dual of the vertical $(1,0)$ tangent
bundle).  Let $D_{i}\subset \overline{\cT}_{g,n}$ be the divisor associated to the $i$th marked
points and let $L_{D}$ be the line bundle associated to the divisor $D:= \sum_{i=1}^{n}D_{i}$.  Then, by analogy with the discussion in \S~\ref{rr.0}, we see that the family index of 
$\db_{\ell}$ is the same as the family index of the family of $\db$-operators
\begin{equation}
  \widehat{\db}_{\ell}: \CI(\cT_{g,n};\overline{K}_{v}^{\ell}\otimes L_{D}^{\ell-1})\to 
    \CI(\cT_{g,n};\Lambda^{0,1}_{v}\otimes\overline{K}_{v}^{\ell}\otimes L_{D}^{\ell-1})
\label{grr.1}\end{equation}
for $\ell>0$ and 
\begin{equation}
  \widehat{\db}_{\ell}: \CI(\cT_{g,n};\overline{K}_{v}^{\ell}\otimes L_{D}^{\ell})\to 
    \CI(\cT_{g,n};\Lambda^{0,1}_{v}\otimes\overline{K}_{v}^{\ell}\otimes L_{D}^{\ell})
\label{grr.2}\end{equation}
for $\ell\le 0$.  On the fibration $\pi_{n+1}: \cM_{g,n+1}\to \cM_{g,n}$, this corresponds to
the following situation.  Let $\omega_{\pi_{n+1}}$ be the relative dualizing sheaf of this 
fibration, that is, the sheaf of sections of $\overline{K}_{v}$.  Let $\omega_{\pi_{n+1}}(D)$ be
the logarithmic variant of $\omega_{\pi_{n+1}}$, which means that the local sections of
$\omega_{\pi_{n+1}}(D)$ are sections of $\omega_{\pi_{n+1}}$ with possibly simple poles at the
first $n$ marked points.  Then the line bundle $\overline{K}_{v}\otimes L_{D}$ on $\cT_{g,n}$ 
corresponds to the sheaf $\omega_{\pi_{n+1}}(D)$ on $\cM_{g,n+1}$.

Going back to the formula of theorem~\ref{lf.8}, we see that 
the form $e_{i}$ then represents the Miller
class $\psi_{i}= c_{1}(\cL_{i})$.  On the other hand, since the Miller class $\psi_{n+1}$ on $\cM_{g,n+1}$ is given by $\psi_{n+1}= c_{1}(\omega_{\pi_{n+1}}(D))$ (see for instance p.254 in \cite{Witten}), the first term in the right-hand side of \eqref{lf.10b} can be seen to represent
a linear combination of the Mumford-Morita classes
\begin{equation}
    \kappa_{j}:= (\pi_{n+1})_{*}( \psi^{j+1}_{n+1}) =\left[(\pi_{n+1})_{*}(e_{n+1}^{j+1})\right], \quad j\in\bbN_{0},
\label{grr.4}\end{equation}
where $e_{n+1}$ is the Chern form of the vertical canonical line bundle $K_{v}\cong
\overline{K}_{v}\otimes L_{D}$.
The precise formula involves the Bernouilli numbers $B_{m}$ and the 
Bernouilli polynomials $B_{m}(\ell)$, which are defined by the following identities,
\begin{equation}
\frac{x}{e^{x}-1}= \sum_{m\ge 0} B_{m} \frac{x^{m}}{m!}, \quad
\frac{e^{\ell x}x}{e^{x}-1}= \sum_{m\ge 0} B_{m}(\ell) \frac{x^{m}}{m!}.
\label{Bernouilli}\end{equation}
Thus, the first term in \eqref{lf.10b} is seen to represent the cohomology class
\begin{equation}
 (\pi_{n+1})_{*} \lrpar{ \frac{e^{\ell \psi_{n+1}}\psi_{n+1}}{e^{\psi_{n+1}}-1}}=
   \sum_{m\ge 1} B_{m}(\ell) \frac{\kappa_{m-1}}{m!}.
\label{tc.1}\end{equation}
On the moduli space, theorem~\ref{lf.8} therefore gives the following local formula
(in the sense of orbifolds).
\begin{corollary}\label{cor:Moduli}
In the sense of orbifolds, the Chern character of the index of the family $\db_{\ell}$ 
associated to the forgetful map $\pi_{n+1}\cM_{g,n+1}\to \cM_{g,n}$ is given at the form level
by
\begin{multline}
\Ch(\ker D^{+}_{\ell})-\Ch (\ker D^{-}_{\ell})= 
      \sum_{m\ge 1} \frac{B_{m}(\ell)}{m!} k_{m-1} 
+\frac{n}{2}\sign \lrpar{\frac{1}{2}-\ell} \\
-\sum_{i=1}^{n}
\lrpar{ \frac{1}{2\tanh \lrpar{\frac{e_{i}}{2}}}-\frac{1}{e_{i}}}  
-\lrpar{\frac{1}{2\pi \sqrt{-1}}}^{\frac{N}{2}}d\int_{0}^{\infty} \Str\lrpar{ \frac{\pa \bbA^{t}_{D_{\ell}}}{\pa t} e^{-(\bbA^{t}_{D_{\ell}})^{2}}} dt,
\label{fm.121}\end{multline}
where $k_{m}= (\pi_{n+1})_{*}(e_{n+1}^{m+1})$ and $e_{i}$ are canonical form
representatives of the Morita-Mumford-Miller classes $\kappa_{m}$ and $\psi_{i}$.
\label{fm.122}\end{corollary}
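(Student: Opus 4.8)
The plan is to deduce Corollary~\ref{cor:Moduli} from Theorem~\ref{lf.8} by descending the form-level identity \eqref{lf.10b} to the moduli orbifold and then rewriting its Atiyah--Singer term cohomologically; most of the ingredients were assembled in the discussion preceding the statement, so the proof is chiefly a matter of assembly and of making the orbifold descent precise. First I would observe that every geometric object appearing in \eqref{lf.10b} is attached canonically to the complex-analytic data of $p:\cT_{g,n}\to T_{g,n}$: the fibrewise hyperbolic metrics $g_{\cT_{g,n}/T_{g,n}}$ come from uniformization, the boundary defining function $\rho$ of \eqref{con.19} is the intrinsic fibrewise area function, the canonical connection $\cP$ is manufactured from the Beltrami solutions $f^{\mu}$ of \eqref{con.7}, and the bundles $K_v^{\ell}$ and $\cL_i$---hence also the Bismut superconnection $\bbA^t_{D_\ell}$ and the curvature forms $e_i$---are tautological. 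Each is therefore $\Mod_{g,n}$-equivariant, so each differential form in \eqref{lf.10b} is $\Mod_{g,n}$-invariant. Since $\cM_{g,n}=T_{g,n}/\Mod_{g,n}$ in the orbifold sense, the quotient of $p$ by $\Mod_{g,n}$ is the forgetful map $\pi_{n+1}:\cM_{g,n+1}\to\cM_{g,n}$, and ${}^{\hc}\cT_{g,n}$ is $\Mod_{g,n}$-equivariantly obtained by filling each puncture with a marked point, the identity \eqref{lf.10b} descends verbatim to $\cM_{g,n}$ as an equality of orbifold forms with $p$ replaced by $\pi_{n+1}$.

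Next I would identify the first term on the right of \eqref{lf.10b}. Using the paragraph before the corollary, on $\overline{\cT}_{g,n}$ one has $K_v\cong\overline{K}_v\otimes L_D$; this line bundle corresponds on $\pi_{n+1}$ to the logarithmic dualizing sheaf $\omega_{\pi_{n+1}}(D)$, and $\psi_{n+1}=c_1(\omega_{\pi_{n+1}}(D))$, so $e_{n+1}$ is a form representative of $\psi_{n+1}$ and the index of $\db_\ell$ equals that of the twisted family $\widehat{\db}_\ell$ of \eqref{grr.1}--\eqref{grr.2}. Writing $x=c_1(T_v^{1,0})=-e_{n+1}$, I would compute
\[
   \Ch(T_v^{-\ell})\Td(T_v)=\frac{e^{\ell e_{n+1}}\,e_{n+1}}{e^{e_{n+1}}-1}=\sum_{m\ge 0}B_m(\ell)\,\frac{e_{n+1}^{\,m}}{m!}
\]
by \eqref{Bernouilli}. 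Applying the fibre integration $(\pi_{n+1})_*$, which lowers degree by $2$ and kills the $m=0$ term, and using $k_{m-1}=(\pi_{n+1})_*(e_{n+1}^{\,m})$ from \eqref{grr.4}, the first term of \eqref{lf.10b} becomes $\sum_{m\ge 1}\frac{B_m(\ell)}{m!}k_{m-1}$, which is \eqref{tc.1}. Finally, the identification $\cL_{i,[\Sigma]}=K_{\overline{\Sigma}}|_{x_i}$ of \eqref{cef.8}, together with Wolpert's description of the metric and connection on $\cL_i$, shows that $e_i$ represents $\psi_i=c_1(\cL_i)$. Inserting both facts into the descended version of \eqref{lf.10b} yields \eqref{fm.121}.

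The routine part is the Bernoulli bookkeeping above; the genuine obstacle is the descent itself. One must check that the $\Mod_{g,n}$-invariant forms of \eqref{lf.10b}, which a priori live only on the contractible cover $T_{g,n}$ and there carry no higher cohomology, really do glue to well-defined forms on the orbifold $\cM_{g,n}$, and that the passage to $\overline{\cT}_{g,n}$, the twisting by $L_D$, and the replacement of $p$ by $\pi_{n+1}$ are compatible with the de Rham pushforward used to define the classes $\kappa_m$ in \eqref{grr.4}. A useful sanity check on this identification is its agreement, modulo boundary terms and for $\ell=0,1$, with Bini's Grothendieck--Riemann--Roch computation on $\overline{\cM}_{g,n+1}$, and in degree zero with the numerical index \eqref{rr.14}.
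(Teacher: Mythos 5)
Your proposal is correct and follows essentially the same route as the paper: the corollary is obtained by observing that every term of \eqref{lf.10b} is $\Mod_{g,n}$-invariant and hence descends to $\cM_{g,n}$ as an orbifold identity, identifying $e_i$ with a form representative of $\psi_i$ and $e_{n+1}$ with one of $\psi_{n+1}=c_1(\omega_{\pi_{n+1}}(D))$, and rewriting the fibre integral via the Bernoulli expansion \eqref{tc.1}. Your Bernoulli bookkeeping (with $e_{n+1}=-c_1(T_v^{1,0})$, so $\Ch(T_v^{-\ell})\Td(T_v)=\sum_{m\ge0}B_m(\ell)e_{n+1}^m/m!$ and the $m=0$ term dying under $(\pi_{n+1})_*$) matches the paper's computation exactly.
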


If $\overline{\cM}_{g,n}$ denote the Deligne-Mumford compactification of the moduli space
$\cM_{g,n}$, then theorem~\ref{lf.8} can be intuitively interpreted as a local version of the
Grothendieck-Riemann-Roch theorem applied to the morphism $\pi_{n+1}:\overline{\cM}_{g,n+1}\to
\overline{\cM}_{g,n}$ and the sheaf
\begin{equation}
      \widetilde{\omega}_{\ell}:= \left\{ \begin{array}{ll}
          \omega_{\pi_{n+1}}(D)^{\ell-1}\otimes \omega_{\pi_{n+1}}, & \ell > 0,  \\
           \omega_{\pi_{n+1}}(D)^{\ell}, & \ell \le 0.
      \end{array}
      \right.
\label{grr.3}\end{equation}
In this context, the Grothendieck-Riemann-Roch theorem was first studied and used by Mumford
\cite{Mumford} in the case $n=0$ with formula given by
\[
   \Ch( (\pi_{1})_{*}\omega_{\pi_{1}}^{\ell})= 
   \sum_{m\ge 1} \frac{B_{m}(\ell)}{m!} \kappa_{m-1}+ (\mbox{terms coming from
   } \pa\overline{\cM}_{g}).
\]
When $n>0$, a Grothendieck-Riemann-Roch formula was obtained for the sheaf 
$\omega_{\pi_{n+1}}^{\ell}$ by Bini \cite{Bini},
\begin{equation}
\Ch( (\pi_{n+1})_{*}\omega_{\pi_{n+1}}^{\ell})= 
   \sum_{m\ge 1} \frac{B_{m}(\ell)}{m!} \widetilde{\kappa}_{m-1}+ (\mbox{terms coming from
   } \pa\overline{\cM}_{g,n}).
\label{bini}\end{equation}
where $\widetilde{\kappa}_{m}:= (\pi_{n+1})_{*}\lrpar{ c_{1}( \omega_{\pi_{n+1}})^{m+1}}$.
When $\ell=0$ and $\ell=1$, it makes sense to compare our formula with the one of
Bini.  In that case, using the relation
\begin{equation*}
  \kappa_{m}= \widetilde{\kappa}_{m} + \sum_{i=1}^{n} \psi_{i}^{m}
\label{AC}\end{equation*}
proved by Arbarello and Cornalba \cite{AC} together with the identity
\[
     \frac{x}{2 \tanh{ \frac{x}{2}}}= \frac{x}{e^{x}-1}+ \frac{x}{2},
\]
we can easily check that, as expected, our formula agrees with the interior contribution of \eqref{bini}.

\section{The spectral $\hc$-zeta determinant }\label{Selberg.0} $ $\newline
On any geometrically finite hyperbolic surface $\Sigma=\Gamma\setminus\bbH$, the Selberg's zeta function is defined for $\Re\lrpar s>1$ to be
\begin{equation}
	Z_\Sigma\lrpar s = \prod_{\{\gamma\}}\prod_{k=0}^\infty\lrpar{1-e^{-\lrpar{s+k}\ell\lrpar\gamma}}
\label{Selberg.1}\end{equation}
where the outer product goes over conjugacy classes of primitive {\em hyperbolic} elements of $\Gamma$ and $\ell\lrpar\gamma$ is the length of the corresponding closed geodesic.

On closed hyperbolic surfaces, a well-known result of D'Hoker and Phong 
\cite{DP} says that the determinant of the Laplacian $\Delta_{\Sigma, \ell}$ acting
on sections of $K^{\ell}$ can be expressed in terms of special values
of the Selberg's Zeta function,
\begin{equation}
  \begin{array}{ll}
  \det(\Delta_{\Sigma, \ell})= Z_{\Sigma}(\ell) e^{-c_{\ell-1}\chi(\Sigma)},&
    \ell\ge 2, \\
   \det'(\Delta_{\Sigma,\ell})= Z_{\Sigma}'(1)e^{-c_{0}\chi(\Sigma)}, &
    \ell=0,1.
  \end{array}
\label{DP.1}\end{equation}
where 
\begin{multline}
  c_{\ell}:= \sum_{0\le m< \ell -\frac{1}{2}} ( 2\ell-2m-1)\log(2\ell-m)
  -\left(\ell+\frac{1}{2}\right)^{2} \\
 + \left( \ell +\frac{1}{2}\right) \log{2\pi} + 2\zeta_{Riem}(-1).
\label{DP.2}\end{multline}
Shortly after, it was shown by Sarnak \cite{Sarnak} that for the geometric Laplacian with non-negative spectrum $\Delta_{\Sigma}$,
\begin{equation}\label{Sarnak}
	\frac{\det\lrpar{\Delta_{\Sigma}+s\lrpar{s-1}}} {Z_{\Sigma}\lrpar s} =
	\lrpar{e^{E-s\lrpar{s-1}}\frac{\Gamma_2\lrpar s^2}{\Gamma\lrpar s}\lrpar{2\pi}^s}^{-\chi\lrpar \Sigma}
\end{equation}
where $E=-\frac14-\frac12\log2\pi+2\zeta_{Riem}'\lrpar{-1}$, $\Gamma_2$ is the Barnes double Gamma function.  As indicated in \cite{Sarnak}, the
formula of D'Hoker and Phong can be recovered relatively easily from
\eqref{Sarnak}.  

On a Riemann surface with cusps, the Selberg Zeta function as defined above still makes sense.  
However, since the Laplacian has a continuous spectrum, the definition of its determinant
is more subtle.  It was studied by Efrat \cite{Efrat1}, \cite{Efrat2} and by M\"uller \cite{Muller} using scattering theory to understand the contribution from the continuous spectrum.  
In this paper, we use renormalized integrals to extend the usual definition of the determinant via zeta-regularization to these manifolds, with the advantage that this does not require the metric to have constant curvature. We then use the analysis of \cite{BJP} to show that, on hyperbolic surfaces, our definition satisfies \eqref{Sarnak} with the right-hand-side replaced by a meromorphic function depending only on the genus and the number of punctures, an important feature for our purposes.

\subsection{The determinant of the Laplacian} \label{detL.0}  $ $ \newline

To relate the determinant with the Selberg Zeta function and get an analog of
formula \eqref{Sarnak}, it is convenient to work first with the (positive) geometric Laplacian $\Delta_{\Sigma}$ instead of the $\overline{\pa}$-Laplacian.  Recall that the two are the same modulo a multiplicative constant,
\[
             \Delta_{\overline{\pa}}= \frac{1}{2} \Delta_{\Sigma}.
\]

Following \cite[$\S$9.5]{APSbook} and \cite[$\S$3]{Hassell}, we define the zeta function of $\Delta_{\Sigma}$ using the renormalized trace (see e.g., \cite{Albin-Rochon})
\begin{equation*}
	\zeta_{\Delta_{\Sigma}}(z) := \frac1{\Gamma(z)}
	\int_0^{\infty} t^z \; {}^R \Tr\lrpar{ e^{-t\Delta_\Sigma}-\cP_{\ker\Delta_\Sigma} } \; \frac{dt}t.
\end{equation*}
Since $\Delta_\Sigma$ is Fredholm, zero is spectrally isolated and the integrand decays exponentially for large times. Thus the integral defines a holomorphic function for $\Re(z)$ large enough. The small-times asymptotics of the integrand (whose existence follows from the construction of the heat kernel in \cite{Vaillant}) allow us to extend the function meromorphically to the whole complex plane. We denote the meromorphic extension by the same symbol and define
\begin{equation*}
	\log\det\Delta_{\Sigma} := -\zeta_{\Delta_{\Sigma}}'\lrpar 0.
\end{equation*}

We can find a more explicit expression for the zeta function by subtracting the first few terms in the expansion of the heat kernel at $t=0$. The form of this expansion can be deduced for arbitrary $\fD$ operators of Laplace-type from Vaillant's construction (see the appendix of \cite{Albin-Rochon3} for such an approach), but for the case at hand the expansion is well-known (see, e.g., (2.3) in \cite{Muller})
\begin{equation}
  {}^{R}\Tr( e^{-t\Delta_{\Sigma}}) = \frac{a_{-1}}{t} + \ta_{-\frac{1}{2}}\frac{\log t}{\sqrt{t}}
    + \frac{a_{-\frac{1}{2}}}{\sqrt{t}} + a_{0} + \mathcal{O}(\sqrt{t}) \quad \mbox{as} \; t\to 0^{+}.
\label{expansion.1}\end{equation}
Thus, writing $f_0\lrpar{t}= a_{-1}t^{-1}+ \ta_{-\frac12}\frac{\log t}{\sqrt{t}}+a_{-\frac12}t^{-\frac12}+a_0$ and choosing any $C>0$, we have the expression
\begin{equation*}\begin{array}{ll}
	\zeta_{\Delta_{\Sigma}}\lrpar z &=  
	\frac1{\Gamma\lrpar z}
	\int_0^C t^z \lrpar{{}^R\Tr\lrpar{e^{-t\Delta_{\Sigma}}}-f_0\lrpar t} \;\frac{dt}t\\
	&+\frac1{\Gamma\lrpar z}
	\int_C^\infty t^z \lrpar{{}^R\Tr\lrpar{e^{-t\Delta_{\Sigma}}}-\dim\ker_{-}\lrpar{\Delta_{\Sigma}}} \;\frac{dt}t\\
	&+\frac{C^z}{\Gamma\lrpar{z+1}}\lrpar{a_0-\dim\ker_{-}\lrpar{\Delta_{\Sigma}}}
	+\frac{C^{z-\frac12}a_{-\frac12}}{\lrpar{z-\frac12}\Gamma\lrpar z}  \\
	&+ \frac{\ta_{-\frac12}C^{z-\frac12}}{\Gamma(z)}\left( \frac{\log C}{z-\frac12}- \frac{1}{(z-\frac12)^{2}}
     \right)	
	+\frac{C^{z-1}a_{-1}}{\lrpar{z-1}\Gamma\lrpar z}.
\end{array}\end{equation*}
Differentiating and setting $z=0$, we get 
\begin{equation}\begin{split}
	\zeta_{\Delta_{\Sigma}}'\lrpar 0 &=  
	\int_0^C \lrpar{{}^R\Tr\lrpar{e^{-t\Delta_{\Sigma}}}-f_0\lrpar t} \;\frac{dt}t\\
	&+
	\int_C^\infty \lrpar{{}^R\Tr\lrpar{e^{-t\Delta_{\Sigma}}}-\dim\ker_{-}\lrpar{\Delta_{\Sigma}}} \;\frac{dt}t\\
	&+\lrpar{\log C + \gamma_e}\lrpar{a_0-\dim\ker_{-}\lrpar{\Delta_{\Sigma}}}
	-2C^{-\frac12}a_{-\frac12} \\
	&-C^{-1}a_{-1} + \ta_{-\frac12}C^{-\frac12}(-4-2\log C)
\end{split}\label{gamma.2}\end{equation}
by using the fact that $\frac1{\Gamma\lrpar z}\rest{z=0}=0$, $\pa_z\rest{z=0}\frac1{\Gamma\lrpar z}=1$ and $\pa_z\rest{z=0}\frac1{\Gamma\lrpar{z+1}}=\gamma_e$ is Euler's gamma constant.

More generally, and to connect with \eqref{Sarnak}, we can consider the determinant of 
$\Delta_{\Sigma}$ with its spectrum shifted by a complex number $w$, that is, the determinant
of $\Delta_{\Sigma}+w$. Just as before we have
\begin{equation}\begin{split}
	\zeta_{\Delta_{\Sigma}}\lrpar{z;w} &=  
	\frac1{\Gamma\lrpar z}
	\int_0^\infty t^z \lrpar{{}^R\Tr\lrpar{e^{-t\Delta_{\Sigma}}}-f_0\lrpar t} e^{-tw} \;\frac{dt}t\\
	&+\frac{a_0}{w^z} + \frac{\ta_{-\frac12}}{w^{z-\frac12}}\frac{\left(\Gamma_{\log}\lrpar{z-\frac12} -
	           \log w \Gamma\lrpar{z-\frac12}\right)}{\Gamma(z)} \\
	&+\frac{a_{-\frac12}}{w^{z-\frac12}}\frac{\Gamma\lrpar{z-\frac12}}{\Gamma\lrpar z}
	+\frac{a_{-1}}{w^{z-1}}\lrpar{z-1}^{-1}
\end{split}
\label{gamma.1}\end{equation}
where the function $\Gamma_{\log}(z)$ is defined to be
\begin{equation}
  \Gamma_{\log}(z):= \int_{0}^{\infty} t^{z} e^{-t}\log t \frac{dt}{t}
\label{gamma_log.1}\end{equation}
for $\Re z>0$.  Since it satisfies the recurrence relation
\[
    \Gamma_{\log}(z+1)= z\Gamma_{\log}(z)+ \Gamma(z),
\]
it has a meromorphic continuation to the whole complex plane with poles at 
$-\bbN_{0}=0,-1,-2\ldots$.  In particular, it has no pole at $z=-\frac12$.  Taking the derivative
of $\zeta_{\Delta_{\Sigma}}(z;w)$ with respect to $z$ and setting $z=0$, we get
\begin{equation}\begin{split}
	\zeta_{\Delta_{\Sigma}}'\lrpar{0;w} &=  -\log\det\lrpar{\Delta_{\Sigma}+w} \\
	&=\int_0^\infty  \lrpar{{}^R\Tr\lrpar{e^{-t\Delta_{\Sigma}}}-f_0\lrpar t} e^{-tw} \;\frac{dt}t\\
	&-a_0\log w
	-2\sqrt\pi a_{-\frac12}\sqrt w
	+a_{-1}w\lrpar{-1+\log w} \\
	&+\ta_{-\frac12}\sqrt{w}\left( \Gamma_{\log}(-\frac12)- \log w \Gamma(-\frac12)\right).
\end{split}
\label{der_zeta.1}\end{equation}

\subsection{Relation with the Selberg Zeta function} \label{SZf.0} $ $ \newline
To relate the determinant with the Selberg Zeta function, we will follow \cite{BJP} and use a description of the Selberg Zeta function
in terms of the resolvent of the Laplacian.  Given a hyperbolic surface
$\Sigma$ of genus $g$ with $n$ cusps, we denote by
\begin{equation}
   R_{\Sigma}(s):= \left( \Delta_{\Sigma}+ s(s-1)  \right)^{-1}
\label{SZf.1}\end{equation}
the resolvent of the geometric Laplacian $\Delta_{\Sigma}$  with respect to the hyperbolic metric.  The
Schwartz kernel of $R_{\Sigma}(s)$ is singular along the diagonal.  A natural
way to remove this singular part is to subtract the resolvent of the 
model hyperbolic space
\begin{equation}
  R_{\bbH}(s)= \left( \Delta_{\bbH}+ s(s-1)  \right)^{-1}.
\label{SZf.2}\end{equation}
This resolvent has Schwartz kernel defined on $\bbH\times \bbH$.  Hence, thinking of $\Sigma$ as the quotient  $\Gamma\setminus \bbH$ of the hyperbolic half-plane by some appropriate discrete subgroup $\Gamma\subset \SL(2,\bbR)$, there is
a natural lift of the Schwartz kernel $G_{\Sigma}(s;z,w)$ of $R_{\Sigma}(s)$ to
$\bbH\times\bbH$.  Since locally $R_{\Sigma}(s)$ and $R_{\bbH}(s)$ have the
same full symbol (being a parametrix for $\Delta_{\bbH}+s(s-1)$), 
they will have the same singularities along the diagonal.  This means the 
function
\begin{equation}
   \varphi_{\Sigma}(s;z):= (2s-1)\left[  
              G_{\Sigma}(s;z,w)-G_{\bbH}(s;z,w)\right]_{w=z}
\label{SZf.3}\end{equation}
will be smooth in $z\in \bbH$ and meromorphic in $s$.  Because of the
$\SL(2,\bbR)$ invariance of $\Delta_{\bbH}$, the Schwartz kernel 
$G_{\bbH}$ will also be $\SL(2,\bbR)$ invariant when restricted to the 
diagonal in $\bbH\times \bbH$.  This means in particular that the function
$\varphi_{\Sigma}(s;z)$ will be $\Gamma$-invariant and so will descend to give
a function on $\Sigma= \Gamma\setminus \bbH$.  Thus, we can define the function
\begin{equation}
 \phi_{\Sigma}(s):= \sideset{^R}{}\int_{\Sigma} \varphi_{\Sigma}(s;z) dg_{\Sigma}(z)
\label{SZf.4}\end{equation}
where $dg_{\Sigma}$ is the volume form associated to the hyperbolic metric, and the integral is renormalized using $\rho_\Sigma$, the boundary defining function of \eqref{bc.10}.  This function has a meromorphic continuation to the complex plane with possible poles at $\frac{1}{2}- \bbN_{0}/2$.
As a particular example, we can consider the Horn 
$H:=\Gamma_{\infty}\setminus \bbH$ of \eqref{bc.4}.  The end obtained
as $y\to +\infty$ is a cusp end, so we can pick a boundary defining function
as usual there, but the other end when $y\to 0^{+}$ is not a cusp end,
but a funnel, and for that end, one should take a boundary defining function
which is given by $y$ near $y=0$.  With this choice, we can make sense of 
the function $\phi_{H}(s)$. The following proposition is due to Borthwick, Judge, and Perry \cite{BJP}.

\begin{proposition}[\cite{BJP}, Proposition 4.3]
Let $\Sigma$ be a Riemann surface of genus $g$ and with $n$ cusps.  Then 
\[
     \frac{Z_{\Sigma}'(s)}{Z_{\Sigma}(s)}= \phi_{\Sigma}(s)-n \phi_{H}(s).
\]
\label{SZf.5}\end{proposition}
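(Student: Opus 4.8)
The plan is to carry out the classical geometric-side computation of the Selberg trace formula, applied to the resolvent kernel on the diagonal rather than to a heat or wave trace, while tracking carefully the renormalisations that the cusps make necessary. For $\Re s>1$ one has the method-of-images formula $G_{\Sigma}(s;z,w)=\sum_{\gamma\in\Gamma}G_{\bbH}(s;z,\gamma w)$, whose $\gamma=e$ term is exactly $G_{\bbH}(s;z,w)$; subtracting it off gives
\[
   \varphi_{\Sigma}(s;z)=(2s-1)\sum_{\gamma\in\Gamma\setminus\{e\}}G_{\bbH}(s;z,\gamma z).
\]
A torsion-free Fuchsian group of type $(g,n)$ has two kinds of nontrivial conjugacy classes: hyperbolic ones, parametrised by the powers $\gamma_{0}^{k}$, $k\ge1$, of primitive hyperbolic elements (equivalently by the closed geodesics of $\Sigma$), and parabolic ones, coming from the nontrivial powers in the cyclic parabolic subgroups $\Gamma_{1},\dots,\Gamma_{n}$ of \eqref{con.2} and their $\Gamma$-conjugates. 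Splitting the sum over $\Gamma\setminus\{e\}$ into these classes yields, for $\Re s>1$, a pointwise decomposition $\varphi_{\Sigma}=\varphi_{\Sigma}^{\mathrm{hyp}}+\sum_{i=1}^{n}\varphi_{\Sigma,i}^{\mathrm{par}}$ on $\Sigma$, and correspondingly a decomposition of $\phi_{\Sigma}(s)$ once one has controlled the renormalised integrals of the summands.

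For the hyperbolic part I would unfold: since the centraliser in $\Gamma$ of a primitive hyperbolic $\gamma_{0}$ is $\langle\gamma_{0}\rangle$, integrating the $\Gamma$-conjugacy-class sum of $\gamma_{0}^{k}$ over a fundamental domain collapses to an integral over the geodesic tube $\langle\gamma_{0}\rangle\setminus\bbH$; using that $G_{\bbH}(s;z,w)$ is an explicit function of the distance $d(z,w)$ alone (an associated Legendre function of $\cosh d(z,w)$), this evaluates in Fermi coordinates to $\tfrac{1}{2s-1}\,\ell(\gamma_{0})\,\tfrac{e^{-ks\ell(\gamma_{0})}}{1-e^{-k\ell(\gamma_{0})}}$. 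Since $G_{\bbH}(s;z,\gamma z)$ decays exponentially in $d(z,\gamma z)$, which is bounded below on $\Sigma$ and grows towards every cusp, the series $\varphi_{\Sigma}^{\mathrm{hyp}}$ is absolutely integrable over $\Sigma$, so its renormalised integral coincides with the ordinary one; multiplying by $(2s-1)$ and summing over $k\ge1$ and over primitive classes identifies $\int_{\Sigma}\varphi_{\Sigma}^{\mathrm{hyp}}\,dg_{\Sigma}$ with the logarithmic derivative of the product \eqref{Selberg.1}, that is, with $Z_{\Sigma}'(s)/Z_{\Sigma}(s)$.

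For the parabolic part, the centraliser of a nontrivial parabolic is the maximal parabolic $\Gamma_{i}$ containing it, so the $\Gamma$-conjugacy-class sum attached to the $i$-th cusp unfolds to $\int_{\Gamma_{i}\setminus\bbH}\sum_{\gamma\in\Gamma_{i}\setminus\{e\}}G_{\bbH}(s;z,\gamma z)\,dg$; conjugating by $\sigma_{i}$ as in \eqref{con.2} identifies $\Gamma_{i}\setminus\bbH$ isometrically with the Horn $H=\Gamma_{\infty}\setminus\bbH$, matching $\rho_{\Sigma}$ near the $i$-th cusp with the cusp-end defining function of $H$ and turning the integrand into $\varphi_{H}(s;z)/(2s-1)$. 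The integrand fails to decay towards the cusp, forcing a renormalisation there, but towards the funnel end $y\to0$ of $H$ the parabolic sum decays like $y^{2s}$ and is integrable; since the only divergent contributions on both sides live in the cusp end and are renormalised with the same boundary defining function, the unfolding identity persists at the level of renormalised integrals and gives ${}^{R}\!\int_{\Sigma}\varphi_{\Sigma,i}^{\mathrm{par}}\,dg_{\Sigma}=\phi_{H}(s)$. All $n$ cusps produce the same Horn, so together they contribute $n\,\phi_{H}(s)$.

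Adding the pieces gives $\phi_{\Sigma}(s)=Z_{\Sigma}'(s)/Z_{\Sigma}(s)+n\,\phi_{H}(s)$ for $\Re s>1$, hence for all $s$ by meromorphic continuation, both sides being meromorphic (with the poles of $\phi_{\Sigma}$ and $\phi_{H}$ at $\tfrac12-\bbN_{0}/2$ noted before the statement); this is the asserted identity. The main obstacle, and the genuinely new point beyond the closed case, is the bookkeeping in the parabolic step: one must verify that the renormalised integral of the full integrand $\varphi_{\Sigma}$ really splits as (the honest integral of $\varphi_{\Sigma}^{\mathrm{hyp}}$) plus $\sum_{i}$(the renormalised integral of $\varphi_{\Sigma,i}^{\mathrm{par}}$), and that each parabolic term reproduces $\phi_{H}(s)$ exactly — the absolutely convergent parts over the funnel of $H$ and over the bulk of $\Sigma$ matching after unfolding, and the divergent cusp-end pieces cancelling precisely because the two finite-part prescriptions use the same defining function, with no stray boundary term at the interface between the cusp neighbourhoods and the compact core. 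This compatibility of unfolding with renormalisation is the substance of \cite[Proposition~4.3]{BJP}; the closed-surface input in the hyperbolic step is classical.
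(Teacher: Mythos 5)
The paper does not actually prove this proposition: it is imported verbatim as Proposition 4.3 of \cite{BJP}, so there is no in-text argument to compare yours against. Judged on its own, your outline is the standard geometric-side trace-formula computation applied to the resolvent, and it is essentially sound: the method-of-images expansion of $G_{\Sigma}$ for $\Re s>1$, the split of $\Gamma\setminus\{e\}$ into hyperbolic and parabolic conjugacy classes, the unfolding of the hyperbolic orbital integrals to recover $Z_{\Sigma}'/Z_{\Sigma}$ (your closed-form for the tube integral is the correct one), and the unfolding of each parabolic class to the Horn are all right, as is your observation that the renormalized integral reduces to an ordinary integral wherever the integrand is absolutely integrable (hyperbolic part everywhere, parabolic part away from the cusps and at the funnel end of $H$ for $\Re s>1$), so that the identity need only be proved in that half-plane and then continued meromorphically. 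Two points deserve more than the assertion you give them. First, absolute integrability of the hyperbolic sum over a non-compact finite-area surface requires a lattice-point counting estimate that is uniform as $z$ tends into a cusp (the displacement $d(z,\gamma z)$ grows there, but one must sum over infinitely many classes); this is classical but should be cited or proved. Second, and more seriously, the claim that the finite-part prescription commutes with unfolding at the cusp is exactly the content of the proposition and is not automatic: one should truncate at height $Y$, perform the unfolding on the truncated region (checking that for $Y$ large the region $\{y>Y\}$ in the strip is covered only by the trivial coset, so the truncated integrals agree exactly rather than up to an error), and then verify that the two regularizations --- the $\rho^{w}$ finite part on $\Sigma$ and on $H$ --- extract the same constant because the boundary defining functions are matched by the isometry $\sigma_{i}$. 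You correctly identify this as the crux and describe what must be checked, but as written it is a program rather than a proof; carried out, it reproduces the argument of \cite{BJP}.
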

The function $\phi_H(s)$ can be computed explicitly \cite[Proposition 2.4]{BJP},
\begin{equation}
  \phi_{H}(s)= -\log 2 - \Psi \lrpar{s+\frac{1}{2}}+ \frac{1}{2s-1}
\label{SZf.6}\end{equation}
where $\Psi(z)$ is the digamma function $\frac{\Gamma'(z)}{\Gamma(z)}$.
Thus, to relate the logarithmic derivative of $Z_{\Sigma}(s)$ with the 
determinant we need to understand the function $\phi_{\Sigma}(s)$ in terms
of the heat kernel instead of the resolvent.  
\begin{lemma}
In the sense of distributions, we have that
\[
    (\Delta_{\Sigma}+ s)^{-1}= \int_{0}^{\infty}K_{\Sigma}(t)e^{-st}dt,
\quad \mbox{for}\; \Re s>0.
\]
\label{SZf.7}\end{lemma}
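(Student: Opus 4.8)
The statement to prove is the elementary identity
\[
  (\Delta_{\Sigma}+s)^{-1}=\int_{0}^{\infty}K_{\Sigma}(t)\,e^{-st}\,dt,
  \qquad \Re s>0,
\]
in the sense of distributions, where $K_{\Sigma}(t)=e^{-t\Delta_{\Sigma}}$ is the heat operator. The plan is to verify this by applying both sides to the equation defining the resolvent, using the fact that $\Delta_{\Sigma}$ is a nonnegative self-adjoint Fredholm operator whose heat kernel has been constructed in \cite{Vaillant}, together with the good mapping and decay properties of that heat kernel recalled earlier in this section.

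First I would fix $s$ with $\Re s>0$ and define the operator $Q(s):=\int_{0}^{\infty}K_{\Sigma}(t)e^{-st}\,dt$, understood as a weak (Bochner) integral acting on, say, smooth compactly supported sections; the integral converges because $\|K_{\Sigma}(t)\|$ is bounded near $t=0$ and, since $\Delta_\Sigma$ is Fredholm so that its nonzero spectrum is bounded below by some $\lambda_1>0$ and its heat kernel has the small- and large-time behaviour described above (\cf \eqref{expansion.1} and the exponential decay noted after the definition of $\zeta_{\Delta_\Sigma}$), one has $\|K_\Sigma(t)e^{-st}\|\le C e^{-(\Re s)t}$ for $t\ge 1$. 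Next I would compute $(\Delta_{\Sigma}+s)Q(s)$ by differentiating under the integral sign: using $\partial_t K_{\Sigma}(t)=-\Delta_{\Sigma}K_{\Sigma}(t)$ we get
\[
  (\Delta_{\Sigma}+s)\int_{0}^{\infty}K_{\Sigma}(t)e^{-st}\,dt
  =\int_{0}^{\infty}\bigl(-\partial_t K_{\Sigma}(t)+sK_{\Sigma}(t)\bigr)e^{-st}\,dt
  =-\int_{0}^{\infty}\partial_t\bigl(K_{\Sigma}(t)e^{-st}\bigr)\,dt,
\]
and the last integral telescopes to $K_{\Sigma}(0)-\lim_{t\to\infty}K_{\Sigma}(t)e^{-st}=\Id$, the limit vanishing by the decay estimate. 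This shows $Q(s)$ is a right inverse of $\Delta_\Sigma+s$; the analogous computation with the factors in the other order (or invoking self-adjointness) shows it is also a left inverse, so $Q(s)=(\Delta_\Sigma+s)^{-1}$ as operators, hence in particular as distributions via their Schwartz kernels.

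The main point requiring care — and thus the step I expect to be the genuine obstacle — is justifying the interchange of $\Delta_{\Sigma}$ (an unbounded differential operator) with the improper $t$-integral, and the interchange of $\partial_t$ with that integral, in the noncompact setting. On a compact manifold this is routine; here one must use the fine description of $K_{\Sigma}(t)$ as a polyhomogeneous kernel on the appropriate heat space from \cite{Vaillant}, which gives uniform control of $K_{\Sigma}(t)$ and its $\Delta_\Sigma$-derivative near the boundary $\partial\Sigma_{\hc}$, so that the integrals converge in a topology strong enough to commute with $\Delta_\Sigma$ (e.g.\ convergence in the space of cusp operators of a fixed order, or against a fixed test section uniformly in $t$). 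Once this regularity input is in place, the computation above is mechanical. I would present the argument at the level of kernels paired against $\CI_c\times\CI_c$ sections, where all the integrals are genuinely absolutely convergent scalar integrals, and then note that the resulting distributional identity is exactly the claim, with $\Re s>0$ being precisely the range in which $s$ avoids the spectrum $[0,\infty)$ of $\Delta_\Sigma$ away from $0$ and the large-time integral converges.
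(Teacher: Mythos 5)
Your proposal is correct and follows essentially the same route as the paper: apply $(\Delta_{\Sigma}+s)$ to the Laplace transform of the heat operator acting on a test section, use the heat equation and integration by parts in $t$ to recover the identity, and obtain the left-inverse property from the commutation $K_{\Sigma}(t)\Delta_{\Sigma}=\Delta_{\Sigma}K_{\Sigma}(t)$ (which the paper justifies by uniqueness for the heat equation). Your additional remarks on convergence and the interchange of limits are a reasonable elaboration of points the paper treats implicitly.
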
  
\begin{proof}
Let $f\in \CI_{c}(\Sigma)$ be a test function.  Then by definition of the
heat kernel, we have that
\[
     \pa_{t}K_{\Sigma}(t)f + \Delta_{\Sigma}K_{\Sigma}(t)f=0, \quad
   K_{\Sigma}(0)f=f.
\]
Using integration by part, this implies that 
\begin{equation}
\begin{aligned}
\Delta_{\Sigma} \left( \int_{0}^{\infty}K_{\Sigma}(t)f e^{-st}dt \right)&=
   \int_{0}^{-\infty} -\pa_{t}K_{\Sigma}(t)f e^{-st}dt \\
&= \left. -K_{\Sigma}(t)f e^{-st}\right|_{0}^{\infty}- s \int_{0}^{\infty} 
K_{\Sigma}(t)f e^{-st}dt \\
&= f- s \int_{0}^{\infty} K_{\Sigma}(t)f e^{-st}dt,
\end{aligned}
\label{SZf.8}\end{equation}
which shows that 
\[
          (\Delta_{\Sigma}+s) \left( \int_{0}^{\infty}K_{\Sigma}(t)f e^{-st}dt \right)= f.
\]
This means that
\[
          f\mapsto \int_{0}^{\infty} K_{\Sigma}(t)f e^{-st}dt
\]
is a right inverse for $(\Delta_{\Sigma}+s)$.  The same computation shows that
it is a left inverse since
\[
             K_{\Sigma}(t)\Delta_{\Sigma}f= \Delta_{\Sigma}K_{\Sigma}(t)f
\]
by uniqueness of the solution for the heat equation.
\end{proof}
From this lemma we conclude that 
\begin{equation}
 \frac{\varphi_{\Sigma}(s;z)}{2s-1}= \int_{0}^{\infty} \left(K_{\Sigma}(t,z,z)- K_{\bbH}(t;z,z)\right) e^{-s(s-1)t} dt.
\label{SZf.9}\end{equation}

Since the left-hand side is smooth, the right-hand side is smooth as well, which means that $K_{\Sigma}(t;z,z)$ and $K_{\bbH}(t;z,z)$ have the same term of order $t^{-1}$ in their asymptotic expansions as $t\searrow 0$.
Integrating \eqref{SZf.9} in $z$ and taking the finite part, we get 
\begin{equation}
  \frac{ \phi_{\Sigma}(s)}{2s-1}= \sideset{^R}{}\int_{\Sigma} \int_{0}^{\infty} 
            \left( K_{\Sigma}(t;z,z)- K_{\bbH}(t;z,z)  \right) e^{-ts(s-1)}dt dg_{\Sigma}(z).
\label{SZf.10}\end{equation}
The order of integration can be interchanged, since for $\Re(w)\gg 1$,
\begin{multline}
\int_{\Sigma} \int_{0}^{\infty} 
            x^{w}\left( K_{\Sigma}(t;z,z)- K_{\bbH}(t;z,z)  \right) e^{-ts(s-1)}dt dg_{\Sigma}(z)= \\
            \int_{0}^{\infty} \int_{\Sigma} 
            x^{w}\left( K_{\Sigma}(t;z,z)- K_{\bbH}(t;z,z)  \right) e^{-ts(s-1)}dt dg_{\Sigma}(z).
\label{SZf.11}\end{multline}
Hence, we get that
\begin{equation}
  \frac{\phi_{\Sigma}(s)}{2s-1}= \int_{0}^{\infty} \left( {}^{R}\Tr(K_{\Sigma}(t)) - 
         {}^{R}\Tr_{\Sigma}(K_{\bbH}(t))\right) e^{-ts(s-1)}dt
\label{SZf.12}\end{equation}
where 
\[
    {}^{R}\Tr_{\Sigma}(K_{\bbH}(t)):= \FP\int_{\Sigma} K_{\bbH}(t;z,z)dg_{\Sigma}(z).
\]
We recall that $K_{\bbH}(t)$ itself does not descend to $\Sigma \times \Sigma$, but its restriction to the diagonal in $\bbH \times \bbH$ does descend to the diagonal in $\Sigma \times \Sigma$. Indeed, it is well-known that because the hyperbolic metric on $\bbH$ is $\SL(2,\bbR)$-invariant, $K_{\bbH}(t;z,z)$ is
constant in $z$, say equal to $k_{\bbH}(t)dg_{\bbH}(z)$ for some function of $t$.
Since the surfaces we are studying have finite area, we have
\[
      {}^{R}\Tr( K_{\bbH}(t))= k_{\bbH}(t) \area(\Sigma).
\]
\begin{corollary}
The function $k_{\bbH}(t)$ 
has an asymptotic expansion given by
\[
     k_{\bbH}(t) \sim k_{-1}t^{-1}+ o(t^{-1})\quad \mbox{as}\;\, t\searrow 0.
\]
Therefore, if $\Sigma$ is a Riemann surface of genus $g$ with $n$ cusps, then the regularized trace
of its heat kernel has the asymptotic expansion
\[
   {}^{R}\Tr(K_{\Sigma}(t)) \sim k_{-1}\area(\Sigma) t^{-1}+\mathcal{O}(
t^{-\frac{1}{2}}\log t) \quad\mbox{as}\;\, t\searrow 0.
\]
\label{SZf.14}\end{corollary}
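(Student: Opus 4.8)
The plan is to prove the two statements separately, deducing the expansion of ${}^{R}\Tr(K_{\Sigma}(t))$ from that of $k_{\bbH}(t)$ by means of the heat-kernel identity \eqref{SZf.12}.

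For $k_{\bbH}(t)$ I would use the explicit form of the hyperbolic heat kernel on the diagonal: writing $b$ for geodesic distance, the classical formula gives $k_{\bbH}(t) = (4\pi t)^{-3/2} e^{-t/4}\int_{0}^{\infty} \frac{b\, e^{-b^{2}/(4t)}}{\sinh(b/2)}\,db$. Rescaling $b = \sqrt{t}\,\sigma$ and expanding $e^{-t/4}$ and $1/\sinh(\sqrt{t}\,\sigma/2)$ in powers of $t$ produces an asymptotic series $k_{\bbH}(t)\sim (4\pi t)^{-1}\sum_{j\ge 0} c_{j} t^{j}$ in \emph{integer} powers of $t$ with $c_{0}=1$; equivalently this is just the standard interior short-time heat expansion of a surface, whose coefficients, being universal polynomials in the curvature and its covariant derivatives, are constant on the homogeneous space $\bbH$. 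In particular $k_{\bbH}(t) = k_{-1}t^{-1} + o(t^{-1})$ with $k_{-1} = (4\pi)^{-1}$, and — a point I use below — there is no $t^{-1}\log t$ term.

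For the trace I would start from the expansion \eqref{expansion.1}, which follows from Vaillant's construction of the heat kernel (\cf \cite{Muller} and the appendix of \cite{Albin-Rochon3}) and has the form ${}^{R}\Tr(K_{\Sigma}(t)) = a_{-1}t^{-1} + \ta_{-\frac{1}{2}}\frac{\log t}{\sqrt{t}} + a_{-\frac{1}{2}}t^{-1/2} + a_{0} + \mathcal{O}(\sqrt{t})$; only the constant $a_{-1}$ remains to be identified. Using ${}^{R}\Tr_{\Sigma}(K_{\bbH}(t)) = k_{\bbH}(t)\area(\Sigma)$ together with the previous paragraph, the function $g(t):= {}^{R}\Tr(K_{\Sigma}(t)) - k_{\bbH}(t)\area(\Sigma)$ satisfies $g(t) = \bigl(a_{-1}-k_{-1}\area(\Sigma)\bigr)t^{-1} + \mathcal{O}(t^{-1/2}\log t)$ as $t\searrow 0$. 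On the other hand, for $s$ real and large, \eqref{SZf.12} identifies $\int_{0}^{\infty} g(t)\,e^{-ts(s-1)}\,dt$ with the finite number $\phi_{\Sigma}(s)/(2s-1)$ (finite since $\phi_{\Sigma}$ has no pole for $\Re s > \frac{1}{2}$, while $s(s-1)>0$ gives convergence at $t=\infty$); since $e^{-ts(s-1)}$ is bounded near $t=0$, since $t^{-1/2}\log t$ is integrable there, and since $t^{-1}$ is not, the $t^{-1}$ term of $g$ would force the integral to diverge at $0$ unless its coefficient vanishes. Hence $a_{-1} = k_{-1}\area(\Sigma)$, and substituting back into \eqref{expansion.1} gives ${}^{R}\Tr(K_{\Sigma}(t)) = k_{-1}\area(\Sigma)\,t^{-1} + \mathcal{O}(t^{-1/2}\log t)$ as $t\searrow 0$.

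The part I expect to take the most care is not a computation but the bookkeeping of singular terms: the argument above relies on the fact that neither ${}^{R}\Tr(K_{\Sigma}(t))$ (by \eqref{expansion.1}) nor $k_{\bbH}(t)$ (by the first paragraph) carries a $t^{-1}\log t$ term, so that the only obstruction to integrability of $g$ near $0$ is the simple pole, whose coefficient is thereby pinned to zero. A second and more self-contained route to $a_{-1}=k_{-1}\area(\Sigma)$ — which I would mention but not develop, as it leans more on Vaillant's description of the heat kernel near the cusps — uses that the on-diagonal short-time heat expansion is local, so $K_{\Sigma}(t;z,z)$ has the same $t^{-1}$-coefficient $k_{-1}$ as the hyperbolic model at every point of $\Sigma$, together with $\FP_{w=0}\int_{\Sigma}\rho_{\Sigma}^{w}\,dg_{\Sigma} = \area(\Sigma)$ (the hyperbolic area being finite); the content of \eqref{expansion.1} is then precisely that the cusp corrections contribute to ${}^{R}\Tr(K_{\Sigma}(t))$ only at order $t^{-1/2}\log t$ and below.
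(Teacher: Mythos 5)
Your proof is correct, and its two halves sit differently relative to the paper's. For the identification $a_{-1}=k_{-1}\area(\Sigma)$ you argue exactly as the paper does: the finiteness of the left-hand side of \eqref{SZf.12} forces the $t^{-1}$ terms of ${}^{R}\Tr(K_{\Sigma}(t))$ and of ${}^{R}\Tr_{\Sigma}(K_{\bbH}(t))=k_{\bbH}(t)\area(\Sigma)$ to cancel, and since neither expansion carries a $t^{-1}\log t$ term this pins down the coefficient; your explicit remark that the absence of $t^{-1}\log t$ terms is the crux of the bookkeeping is exactly the right point to insist on. Where you genuinely diverge is in the first claim: you obtain $k_{\bbH}(t)=(4\pi t)^{-1}+o(t^{-1})$, with no logarithm, directly from McKean's explicit on-diagonal formula (equivalently, from the universality of the local heat coefficients on the homogeneous space $\bbH$), whereas the paper avoids any explicit formula by running the same cancellation argument through \eqref{SZf.12} on a \emph{compact} hyperbolic surface, where ${}^{R}\Tr=\Tr$ and the classical expansion $\Tr(K_{\Sigma}(t))\sim\alpha t^{-1}+\beta+\mathcal{O}(t)$ with $\alpha=\area(\Sigma)/4\pi$ is available, and then reads off $k_{-1}=\alpha/\area(\Sigma)=\frac{1}{4\pi}$. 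Your route is more self-contained and delivers the full integer-power expansion of $k_{\bbH}$ (hence the absence of a $t^{-1}\log t$ term) for free; the paper's route reaches the same conclusion with no computation at all, at the cost of importing the compact case. Both arguments are sound, and your secondary, undeveloped route via locality of the on-diagonal expansion plus $\FP_{w=0}\int_{\Sigma}\rho_{\Sigma}^{w}\,dg_{\Sigma}=\area(\Sigma)$ is also consistent with the way \eqref{expansion.1} is obtained from Vaillant's construction.
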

\begin{proof}
Consider the case where $\Sigma$ has no cusp.  Then it is well-known that 
${}^{R}\Tr(K_{\Sigma}(t))= \Tr(K_{\Sigma}(t))$ has asymptotic expansion of the form
\[
    \Tr(K_{\Sigma}(t)) \sim \alpha t^{-1} + \beta + \mathcal{O}(t) \quad \mbox{as}\;\, t\searrow 0,
\]
where $\alpha$ and $\beta$ are some constants.  By formula \eqref{SZf.12}, ${}^{R}\Tr(K_{\bbH}(t))$
has the same term of order $t^{-1}$ as $t\searrow 0$, hence we get that
\[
    k_{-1}:= \frac{\alpha}{\area(\Sigma)}= \frac{1}{4\pi}
\]
is such that 
\[
     k_{\bbH}(t) \sim k_{-1}t^{-1}+o(t^{-1})\quad \mbox{as}\;\, t\searrow 0.
\]
When we consider a surface with cusps, ${}^{R}\Tr(K_{\Sigma})$ will have the same term of order $t^{-1}$ as
${}^{R}\Tr_{\Sigma}(K_{\bbH}(t))$ as $t\searrow 0$, hence
\[
   {}^{R}\Tr(K_{\Sigma}(t))\sim k_{-1}\area(\Sigma) t^{-1} + 
          \mathcal{O}(t^{-\frac{1}{2}}\log t), \quad
     \mbox{as}\;\, t\searrow 0.
\]
\end{proof}

Consider the functional 
\begin{equation}
D_{\Sigma}\lrpar s := \det\lrpar{\Delta_{\Sigma}+s\lrpar{s-1}}= \exp\left(-\zeta_{\Delta_{\Sigma}}'(0;s(s-1)) \right).
\label{D.1}\end{equation}
If we differentiate with respect $s$ and use formula \eqref{der_zeta.1}, we  find that 
\begin{equation}\begin{split}\label{D'D}
	\frac{1}{2s-1}\frac{D_{\Sigma}'\lrpar s}{D_{\Sigma}\lrpar s}
	&=\int_0^\infty  \lrpar{{}^R\Tr\lrpar{e^{-t\Delta_{\Sigma}}}-a_{-1}t^{-1}} e^{-ts\lrpar{s-1}} \;dt\\
	&-a_{-1}\log \lrpar{ s(s-1)} .
\end{split}\end{equation}

Combining formula \eqref{D'D} and \eqref{SZf.12} and using corollary~\ref{SZf.14}, we get that
\begin{multline}\label{SZf.17}
\frac{1}{2s-1}\left( \frac{D_{\Sigma}'(s)}{D_{\Sigma}(s)}- \frac{Z'_{\Sigma}(s)}{Z_{\Sigma}(s)}\right) =\\
  \int_{0}^{\infty} \left( {}^{R}\Tr(K_{\Sigma}(t))- a_{-1}t^{-1} -{}^{R}\Tr(K_{\Sigma}(t)) +
          {}^{R}\Tr_{\Sigma}(K_{\bbH}(t)) \right)e^{-ts(s-1)}dt \\
          -k_{-1}\area(\Sigma)\log \left(s(s-1)\right) + \frac{n}{2s-1} \phi_{H}(s)  \\
       = \int_{0}^{\infty} \left( {}^{R}\Tr(K_{\bbH}(t)) -k_{-1}\area(\Sigma)t^{-1}  \right)e^{-ts(s-1)}dt  \\
        \hspace{1cm} - k_{-1}\area(\Sigma)\log \left(s(s-1)\right) +\frac{n}{2s-1} \phi_{H}(s),
\end{multline}
so that
\begin{multline}
\frac{1}{2s-1}\left( \frac{D_{\Sigma}'(s)}{D_{\Sigma}(s)}- \frac{Z'_{\Sigma}(s)}{Z_{\Sigma}(s)}\right)= 
\frac{n}{2s-1} \phi_{H}(s) \\
       \hspace{1cm} + \area(\Sigma) \left[
       \int_{0}^{\infty} \left( k_{\bbH}(t) -k_{-1}t^{-1}  \right)e^{-ts(s-1)}dt
       - k_{-1}\log\left( s(s-1)\right) \right].   
\label{SZf.18}\end{multline}
In particular, for fixed $g$ and $n$, we see that the right hand side does not depend on the
$\Sigma$ since the area is given by $- 2\pi \chi(\Sigma)$ by the Gauss-Bonnet theorem, a quantity that
only depends on $g$ and $n$.  

From \eqref{SZf.6}, we see that 
\begin{equation}
   \phi_{H}(s)= \frac{d}{ds} \left( -s\log 2 - \log\Gamma(s+\frac12) + \frac12\log(2s-1)  \right).
\label{as.1}\end{equation}
Then, according to \eqref{SZf.18} and \eqref{Sarnak}, there exists a constant $C$ such that
\begin{equation}
   \frac{D_{\Sigma}(s)}{Z_{\Sigma}(s)}= C \left(e^{E}-s(s-1) \frac{\Gamma_{2}(s)^{2}}{\Gamma(s)} (2\pi)^{s}
                                          \right)^{-\chi(\Sigma)} 
                                          \left( \frac{\sqrt{2s-1}}{2^{s}\Gamma(s+\frac12)}\right)^{n}
\label{as.2}\end{equation}
where $E:= -\frac{1}{4}-\frac12\log 2\pi + 2 \zeta'(-1)$.  As in \cite{Sarnak}, the constant $C$ can be determined by the 
asymptotic expansion of the logarithm of both sides of \eqref{as.2} as $s$ approaches infinity. 
For the left side, it is clear from \eqref{Selberg.1} that $\log Z_{\Sigma}(s)$ has a trivial asymptotic
expansion as $s\to +\infty$.  Thus, from \eqref{der_zeta.1}, we conclude that the asymptotic behavior of the
logarithm of the left side is given by
\begin{equation}
\begin{split}
\log\lrpar{ \frac{D_{\Sigma}(s)}{Z_{\Sigma}(s)}}&= a_{0}\log s(s-1) + 2\sqrt{\pi}a_{-\frac12}\sqrt{s(s-1)}\\
  &- \ta_{-\frac12}\sqrt{s(s-1)}\lrpar{ \Gamma_{\log}(-\frac12)+ 2\sqrt{\pi}\log(s(s-1))}\\
  &- a_{-1}s(s-1)\lrpar{-1+\log(s(s-1))} + o(1)     
\end{split}
\label{as.3}\end{equation} 
as $s\to+\infty$.  On the other hand, if we set 
\begin{equation}
Z_{\cusp}(s)= \frac{\sqrt{2s-1}}{2^{s}\Gamma(s+\frac12)}= \frac{\sqrt{2}}{2^{s}\sqrt{s-\frac12}
          \Gamma(s-\frac12)},
\label{as.4}\end{equation}
we see using Stirling's formula that its logarithm has the following asymptotic behavior,
\begin{equation}
\log \lrpar{Z_{\cusp}(s)}= -\frac12\log(2\pi) + \lrpar{1-\log 2}\lrpar{s-\frac12}
          -\lrpar{s-\frac12}\log\lrpar{s-\frac12}+ o(1)
\label{as.5}\end{equation}
as $s\to +\infty$.  Since
\begin{gather}
   \sqrt{s(s-1)}= s-\frac12 + o(1),  \\
   \sqrt{s(s-1)} \log \lrpar{s(s-1)}= 2\lrpar{s-\frac12} \log\lrpar{s-\frac12}
   + o(1),
\label{as.6}\end{gather}
as $s\to +\infty$, we can rewrite \eqref{as.3} as
\begin{equation}
\begin{split}
\log\lrpar{ \frac{D_{\Sigma}(s)}{Z_{\Sigma}(s)}}&= a_{0}\log s(s-1) + 
    \lrpar{ 2\sqrt{\pi} a_{-\frac12}- \ta_{-\frac12}\Gamma_{\log}(-\frac12)}\lrpar{s-\frac12}\\
  &- 4\sqrt{\pi}\ta_{-\frac12}\lrpar{s-\frac12}\log\lrpar{s-\frac12} \\
  &- a_{-1}s(s-1)\lrpar{-1+\log(s(s-1))} + o(1)     
\end{split}
\label{as.7}\end{equation} 
as $s\to +\infty$.  Now, from \cite{Sarnak}\footnote{In $(2.19)$ of \cite{Sarnak}, the coefficient of 
$\log s(s-1)$ is $\frac{1}{24}$, but it is supposed to be $\frac{1}{6}$}, we have also that 
\begin{multline}
\log\lrpar{e^{E-s(s-1)}\frac{\Gamma_{2}(s)^{2}}{\Gamma(s)}(2\pi)^{s}}= \\
 -\frac{1}{6}\log s(s-1) 
   +\frac12 s(s-1) - \frac{s(s-1)}{2}\log s(s-1) + o(1)
\label{as.22}\end{multline}
as $s\to +\infty$.  This asymptotic behavior only involves
terms of the form $\log\lrpar{s(s-1)}$ and $s(s-1)\log\lrpar{s(s-1)}$.  Thus, in \eqref{as.7}, the terms
involving $a_{-\frac12}$ and $\ta_{-\frac12}$ counterbalance the asymptotic behavior of
\eqref{as.5} while the terms involving $a_{-1}$ and $a_{0}$ counterbalance the asymptotic
behavior of \eqref{as.22}.
Comparing \eqref{as.7} with \eqref{as.22}, we find 
\begin{equation}
   a_{-1}= g-1= -\frac{\chi(\Sigma)}{2}, \quad a_{0}= \frac{\chi(\Sigma)}{6}.
\label{as.8}\end{equation}
Comparing \eqref{as.7} with \eqref{as.5}, we also get
\begin{equation}
   \ta_{-\frac12}= \frac{n}{4\sqrt{\pi}}, \quad a_{-\frac12}= \frac{n}{2\sqrt{\pi}} 
                                                 \lrpar{ 1-\log 2 + \frac{\Gamma_{\log}(-\frac12)}{4\sqrt{\pi}}}.
\label{as.9}\end{equation}
Now, recall that in \cite{Sarnak}, the constant $E$ is chosen so that 
\begin{multline}
  \log \lrpar{e^{E-s(s-1)}\frac{\Gamma_{2}(s)^{2}}{\Gamma(s)}(2\pi)^{s}}= \\
     -a_{-1}s(s-1)\lrpar{-1 + \log \lrpar{s(s-1)}} +a_{0}\log\lrpar{s(s-1)}.
\end{multline}
This means the constant $C$ has to be chosen to compensate the constant term of \eqref{as.5}, that is,
\begin{equation}
\log C= -\frac{n}{2} \log 2\pi \quad \Longrightarrow \quad C= (2\pi)^{-\frac{n}{2}}.
\label{as.10}\end{equation}
This gives the following result.
\begin{theorem}
For a Riemann surface of genus $g$ with $n$ cusps satisfying $2g-2+n>0$ and
equipped with the hyperbolic metric, we have
\[
  \det\lrpar{\Delta_{\Sigma}+s(s-1)}= Z_{\Sigma}(s)
    \frac{\lrpar{e^{E-s(s-1)}\frac{\Gamma_{2}(s)^{2}}{\Gamma(s)}(2\pi)^{s}}^{-\chi(\Sigma)}}
         {\lrpar{2^{s}\sqrt{\pi(s-\frac12)}\Gamma(s-\frac12)}^{n}}.
\]
\label{as.11}\end{theorem}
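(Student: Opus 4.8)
The plan is to read the theorem off as the consolidation of the discussion that precedes it. By the definition \eqref{D.1} we have $D_{\Sigma}(s)=\det\lrpar{\Delta_{\Sigma}+s(s-1)}$, so the identity \eqref{as.2} together with the value $C=(2\pi)^{-n/2}$ computed in \eqref{as.10} already yields
\[
\det\lrpar{\Delta_{\Sigma}+s(s-1)} = (2\pi)^{-\frac{n}{2}}\, Z_{\Sigma}(s)\,\lrpar{e^{E-s(s-1)}\tfrac{\Gamma_{2}(s)^{2}}{\Gamma(s)}(2\pi)^{s}}^{-\chi(\Sigma)}\lrpar{\tfrac{\sqrt{2s-1}}{2^{s}\Gamma(s+\frac12)}}^{n}.
\]
Thus the only thing left is to put the $n$-th power factor into the advertised shape.

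First I would simplify the cusp factor with the elementary identities $\Gamma(s+\tfrac12)=(s-\tfrac12)\Gamma(s-\tfrac12)$ and $\sqrt{2s-1}=\sqrt{2}\,\sqrt{s-\tfrac12}$, which give $\frac{\sqrt{2s-1}}{2^{s}\Gamma(s+\frac12)}=\frac{\sqrt{2}}{2^{s}\sqrt{s-\frac12}\,\Gamma(s-\frac12)}$, i.e. the function $Z_{\cusp}(s)$ of \eqref{as.4}. Multiplying the $n$-th power of this by $(2\pi)^{-n/2}$ and using $\sqrt{2}\cdot(2\pi)^{-1/2}=\pi^{-1/2}$ absorbs the constant into the radical, $(2\pi)^{-n/2}Z_{\cusp}(s)^{n}=\big(2^{s}\sqrt{\pi(s-\tfrac12)}\,\Gamma(s-\tfrac12)\big)^{-n}$. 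Substituting this back reproduces exactly the stated formula. I would also note that no convergence issue arises here: \eqref{as.2} was derived for $\Re(s)$ large, where the Selberg product \eqref{Selberg.1} converges and the interchange of integrations in \eqref{SZf.11} is legitimate, and both sides of the claimed identity are meromorphic in $s$, so the equality propagates to all of $\bbC$ by analytic continuation.

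The genuinely delicate part is not this final rearrangement but the chain feeding \eqref{as.2}: the heat-kernel representation \eqref{SZf.12} of $\phi_{\Sigma}(s)/(2s-1)$, which rests on Lemma \ref{SZf.7} and the Fubini step \eqref{SZf.11}; the input of Borthwick--Judge--Perry (proposition~\ref{SZf.5}) relating $Z_{\Sigma}'/Z_{\Sigma}$ to $\phi_{\Sigma}-n\phi_{H}$; and, above all, the asymptotic-matching argument comparing \eqref{as.7} with Sarnak's expansion \eqref{as.22} and with the Stirling expansion \eqref{as.5}, which simultaneously identifies the heat coefficients $a_{-1},a_{0},\ta_{-\frac12},a_{-\frac12}$ in \eqref{as.8}--\eqref{as.9} and pins down $C$ in \eqref{as.10}. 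That asymptotic comparison, which requires keeping careful track of which terms scale like $s(s-1)$, like $\sqrt{s(s-1)}$, like their logarithmic companions, and like the constant term, is the main obstacle; it is also the place where one must be most careful with the conventions for $\Gamma_{\log}$, the Barnes function $\Gamma_{2}$, and the corrected coefficient flagged in the footnote to \eqref{as.22}.
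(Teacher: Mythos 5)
Your proposal is correct and follows exactly the paper's own route: the theorem is obtained by substituting $C=(2\pi)^{-n/2}$ from \eqref{as.10} into \eqref{as.2} and rewriting the cusp factor via $\Gamma(s+\tfrac12)=(s-\tfrac12)\Gamma(s-\tfrac12)$ as in \eqref{as.4}, with the real work residing in the asymptotic matching of \eqref{as.3}--\eqref{as.10} that you correctly identify as the delicate input. The algebraic absorption of $(2\pi)^{-n/2}$ into $\sqrt{\pi(s-\tfrac12)}$ checks out.
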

As a consequence, we see that the ratio 
\[
                  \frac{\det\lrpar{\Delta_{\Sigma}+s(s-1)}}{Z_{\Sigma}(s)}
\]
is a meromorphic function in $s$ which only depends on the genus $g$ and the number of cusps $n$.
This means that, up to a multiplicative constant depending only on $g$ and $n$, the determinant
of $\Delta_{\Sigma}$ is given by $Z_{\Sigma}'(1)$.  The formula of theorem~\ref{as.11} can also be
expressed in terms of the $\overline{\pa}$-Laplacian
\[
            \Delta_{\bpa}= \frac{1}{2}\Delta_{\Sigma}.
\]
For the heat kernel of the $\bpa$-Lapacian, we have the following short time asymptotic expansion,
\begin{equation}
\begin{split}
{}^{R}\Tr\lrpar{e^{-t\Delta_{\bpa}}} &= {}^{R}\Tr\lrpar{ e^{-\frac{t}{2}\Delta_{\Sigma}}}  \\
                   &= \frac{2a_{-1}}{t}+ \frac{\sqrt{2}\ta_{-\frac12}}{\sqrt{t}}\log t  
                   + \frac{\sqrt{2}a_{-\frac12}-\sqrt{2}\ta_{-\frac12}\log 2}{\sqrt{t}} + a_{0} + \mathcal{O}(\sqrt{t})
\end{split}
\label{bpa.1}\end{equation}
as $t\to 0^{+}$, where $a_{-1}, \ta_{-\frac12},a_{-\frac12},a_{0}$ are the coefficients in
\eqref{expansion.1}.  From formula \eqref{gamma.1}, we conclude that
\[
    \zeta_{\Delta_{\bpa}}\lrpar{z;\frac{s(s-1)}{2}}= 2^{z} \zeta_{\Delta_{\Sigma}}\lrpar{ z;s(s-1)}.
\] 
Hence,
\[
   \zeta'_{\Delta_{\bpa}}\lrpar{0;\frac{s(s-1)}{2}}= \lrpar{\log 2} \zeta_{\Delta_{\Sigma}}\lrpar{0;s(s-1)}
      + \zeta_{\Sigma}'\lrpar{0;s(s-1)},
\]
which means that 
\[
   \det\lrpar{ \Delta_{\bpa}+ \frac{s(s-1)}{2}}= 2^{-\zeta_{\Delta_{\Sigma}}(0;s(s-1))}
                                                  \det\lrpar{ \Delta_{\Sigma}+s(s-1)}.
\]
Now, $\zeta_{\Delta_{\Sigma}}\lrpar{0;s(s-1)}$ can be computed explicitly from \eqref{gamma.1}
and \eqref{as.8},
\begin{equation}
\begin{split}
\zeta_{\Delta_{\Sigma}}\lrpar{0;s(s-1)} &= a_{0} -a_{-1} s(s-1)  \\
                                        &= \chi(\Sigma) \lrpar{ \frac{1}{6}+ \frac{s(s-1)}{2}}.
\end{split}
\label{bpa.2}\end{equation}
From theorem~\ref{as.11}, we get the following formula.
\begin{corollary}
For a Riemann surface $\Sigma$ of genus $g$ with $n$ cusps satisfying $2g-2+n>0$ and equipped with the hyperbolic metric, we have
\[
\det\lrpar{\Delta_{\bpa}+\frac{s(s-1)}{2}}= Z_{\Sigma}(s)
\frac{\lrpar{ 2^{\frac{1}{6}+\frac{s(s-1)}{2}} e^{E-s(s-1)}\frac{\Gamma_{2}(s)^{2}}{\Gamma(s)}(2\pi)^{s}}^{-\chi(\Sigma)}}
         {\lrpar{2^{s}\sqrt{\pi(s-\frac12)}\Gamma(s-\frac12)}^{n}}.
\]  
\label{bpa.3}\end{corollary}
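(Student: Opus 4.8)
The plan is to obtain the corollary directly from Theorem~\ref{as.11} by keeping careful track of the scaling anomaly that relates the zeta-regularized determinant of $\Delta_{\bpa}=\frac12\Delta_{\Sigma}$ to that of the geometric Laplacian $\Delta_{\Sigma}$.

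First I would record the scaling identity for the spectral zeta function. Since $e^{-t\Delta_{\bpa}}=e^{-\frac t2\Delta_{\Sigma}}$, the substitution $t\mapsto 2t$ in the defining integral (equivalently, inspection of \eqref{gamma.1} using the short-time expansion \eqref{bpa.1}) gives
\[
   \zeta_{\Delta_{\bpa}}\lrpar{z;\frac{s(s-1)}2}=2^{z}\,\zeta_{\Delta_{\Sigma}}\lrpar{z;s(s-1)}
\]
as an identity of meromorphic functions in $z$. Differentiating in $z$, evaluating at $z=0$, using the product rule and the relation $-\zeta'_{\Delta}(0;w)=\log\det(\Delta+w)$ from \eqref{der_zeta.1}, I get
\[
   \det\lrpar{\Delta_{\bpa}+\frac{s(s-1)}2}=2^{-\zeta_{\Delta_{\Sigma}}(0;s(s-1))}\,\det\lrpar{\Delta_{\Sigma}+s(s-1)}.
\]

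Next I would evaluate the anomaly exponent $\zeta_{\Delta_{\Sigma}}(0;w)$ with $w=s(s-1)$. In \eqref{gamma.1} the prefactor $1/\Gamma(z)$ annihilates every summand except the two carrying a simple pole at $z=0$, namely $a_{0}w^{-z}$ and $a_{-1}w^{1-z}(z-1)^{-1}$, so $\zeta_{\Delta_{\Sigma}}(0;w)=a_{0}-a_{-1}w$. Inserting the values $a_{-1}=-\chi(\Sigma)/2$ and $a_{0}=\chi(\Sigma)/6$ from \eqref{as.8} reproduces \eqref{bpa.2}, so the anomaly factor equals $2^{-\chi(\Sigma)\lrpar{1/6+s(s-1)/2}}=\lrpar{2^{1/6+s(s-1)/2}}^{-\chi(\Sigma)}$. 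Multiplying the formula of Theorem~\ref{as.11} by this factor and absorbing $2^{1/6+s(s-1)/2}$ into the base of the $-\chi(\Sigma)$-power in the numerator yields exactly the claimed identity.

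The argument is essentially bookkeeping, and Theorem~\ref{as.11} does the real work; the one point that requires care is the scaling anomaly itself. One must not simply rescale $\Delta_{\Sigma}\mapsto\frac12\Delta_{\Sigma}$ inside the determinant, since that would miss the factor $2^{-\zeta_{\Delta_{\Sigma}}(0;s(s-1))}$; the whole content of the step is to identify which terms of \eqref{gamma.1} contribute at $z=0$ and to pin down this constant. Once that is done the corollary is a one-line consequence of Theorem~\ref{as.11}.
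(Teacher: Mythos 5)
Your proposal is correct and follows essentially the same route as the paper: establish $\zeta_{\Delta_{\bpa}}\lrpar{z;\tfrac{s(s-1)}{2}}=2^{z}\zeta_{\Delta_{\Sigma}}\lrpar{z;s(s-1)}$, differentiate at $z=0$ to extract the anomaly factor $2^{-\zeta_{\Delta_{\Sigma}}(0;s(s-1))}$, evaluate $\zeta_{\Delta_{\Sigma}}(0;s(s-1))=a_{0}-a_{-1}s(s-1)=\chi(\Sigma)\lrpar{\tfrac16+\tfrac{s(s-1)}{2}}$ from \eqref{gamma.1} and \eqref{as.8}, and combine with Theorem~\ref{as.11}. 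The computation of which terms of \eqref{gamma.1} survive at $z=0$ and the resulting value \eqref{bpa.2} match the paper exactly.
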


\subsection{The determinant of $\Delta_{\ell}$ for $\ell\ge 1$}\label{lge.0} $ $\newline

As indicated in \cite{Sarnak}, it is possible to express the determinant of $\Delta_{\ell}$ in terms of 
Selberg Zeta function by using corollary~\ref{bpa.3}.  This is because the spectrum of $\Delta_{\ell}$ is 
essentially given by a shifted version of the spectrum of $\Delta_{0}=\bpa^{*}\bpa$.

Recall first that these various Laplacians are related by the recurrence relation (see\footnote{We have $\ell-1$ instead of $\frac{\ell-1}{2}$ in \cite{TZ} since we use the convention
$|dz|^{2}=2$.} for instance
(1.3) in \cite{TZ})
\begin{equation}
   \Delta_{\ell}\bpa^{*}_{\ell}u = \bpa_{\ell}^{*}u \lrpar{ \Delta_{\ell-1}+\ell-1}
\label{lge.1}\end{equation} 
where $u:=\frac{1}{y^{2}}$ is seen as a section of $\Lambda^{1,0}_{\Sigma}
\otimes \Lambda^{0,1}_{\Sigma}$ on
$\Sigma$.  Taking the formal adjoint
of \eqref{lge.1}, we get
\begin{equation}
   u^{*} \bpa_{\ell}\Delta_{\ell}= \lrpar{ \Delta_{\ell-1}+\ell-1} u^{*} \bpa_{\ell}
\label{lge.2}\end{equation} 
where $u^{*}$ is the conjugate $\overline{u}$ of $u$ seen as a section of
$(\Lambda_{\Sigma}^{1,0})^{-1}\otimes (\Lambda_{\Sigma}^{0,1})^{-1}$.

Since the operator $\db_{\ell}$ is Fredholm, it has a well-defined parametrix
$\db_{\ell}^{-1}: (\ker \db^{*}_{\ell})^{\perp}\to (\ker \db_{\ell})^{\perp}$.  Applying this
parametrix to both sides of \eqref{lge.2}, we get
\begin{equation}
   \Delta_{\ell}=\lrpar{u^{*} \bpa_{\ell}}^{-1} \lrpar{\Delta_{\ell-1}+\ell-1}\lrpar{u^{*} \bpa_{\ell}}.
\label{lge.3}\end{equation}
In the compact case, this directly implies that
\begin{equation}
  \det'(\Delta_{\ell})= \det( \Delta_{\ell-1} +\ell-1)
\label{lge.4}\end{equation}
for $\ell\ge 2$ since $\db_{\ell}$ is surjective in that case.  When $\ell=1$, the operator
$\db_{1}$ is not surjective, but the cokernel of $u^{*}\db_{\ell}$,
\[
       \ker \db^{*}_{1}u= \ker \db_{0}= \ker \Delta_{0}
\]
is precisely the kernel of $\Delta_{0}$, so that we have in that case
\[
             \det'(\Delta_{1})= \det'(\Delta_{0}).
\]
Using \eqref{lge.3} once more and \eqref{rr.24}, we have on the other hand that
for $k>0$, 
\begin{equation}
   \det\lrpar{\Delta_{\ell-1}+ k} =
   \left\{ \begin{array}{ll}  \lrpar{k}^{g-1} \det\lrpar{ \Delta_{0}+ k}, & \ell=2; \\
                   \lrpar{k}^{(2\ell-1)(g-1)} \det\lrpar{\Delta_{\ell-2}+ k
                              +\ell-2 }, & \ell\ge 3.   
           \end{array} \right.
\label{lge.5}\end{equation}
Applying this recursively, we get
\begin{equation}
  \det'(\Delta_{\ell})= \left\{  \begin{array}{ll}
        \det'(\Delta_{0}), & \ell=1;  \\
         \det\lrpar{\Delta_{0}+1}, & \ell=2; \\
        \delta_{\ell,g}\det\lrpar{ \Delta_{0}+ \ell(\ell-1)},&
                               \ell\ge 3.
      \end{array}\right.
\label{lge.6}\end{equation}
where $\delta_{\ell,g}$ is a number depending only on $\ell$ and $g$.  In the non-compact case, one has to be more careful since the regularized trace does not necessarily
vanish on a commutator.  
Taking this into account, the analog of \eqref{lge.3} in the non-compact case is
\begin{equation}
  \det'(\Delta_{\ell})= D_{\ell,n} \det\lrpar{ \Delta_{\ell-1} +\ell-1}
\label{lge.7}\end{equation}
with
\begin{equation}
   -\log \lrpar{D_{\ell,n}}= \lrpar{ \frac{d}{dz} \frac{1}{\Gamma(z)}\int_{0}^{\infty}
       t^{z} {}^{R}\Tr\lrpar{ [(u^{*}\bpa_{\ell})^{-1}e^{-t\Delta_{\ell-1}}, u^{*} \bpa_{\ell}] }e^{-\frac{t(\ell-1)}{2}} \frac{dt}{t}}_{z=0}
\label{lge.8}\end{equation} 
regularizing as in \eqref{gamma.2}.
Although the term $D_{\ell,n}$ might be hard to compute, what is clear is that it only depends on 
$\ell$ and the number $n$ of cusps.  
This is because the regularized trace of a commutator $[A,B]$ `localizes' near the boundary in the sense that it only depends on the Taylor expansion of the integral kernels at the boundary of the diagonal. 
Recall that to construct the heat kernel (see \cite{Vaillant, Albin-Rochon}) we start with a `parametrix' for the heat equation which solves a model equation at the cusp. The solution of this model equation is then used iteratively to construct the Taylor expansion of the heat kernel as we approach the cusp, before finally solving away the remaining error in the interior. The upshot is that, since all cusps have isometric neighborhoods, the term $D_{\ell,n}$ only depends on $\ell$ and $n$ as required. 

Thus using recursively \eqref{lge.7} and applying corollary~\ref{bpa.3}, we get the following.
\begin{corollary}
For a Riemann surface $\Sigma$ of genus $g\ge 2$ with $n$ cusp, we have
\[
   \det'(\Delta_{\ell}) = \left\{ \begin{array}{ll}
         \alpha_{\ell,g,n} Z_{\Sigma}(\ell), &   \ell\ge 2;\\
         \alpha_{\ell,g,n}Z_{\Sigma}'(1), & \ell=0,1;
   \end{array}\right. 
\]
where each constant $\alpha_{\ell,g,n}>0$ only depends on $\ell$, $g$ and $n$.  A similar 
statement holds of the determinant of $D^{-}_{\ell}D^{+}_{\ell}= 2 \Delta_{\ell}$.
\label{lge.9}\end{corollary}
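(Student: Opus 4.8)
The plan is to reduce the computation of $\det'(\Delta_{\ell})$, for every $\ell\ge 0$, to that of a shifted Laplacian $\Delta_{0}+w$ on functions, for which corollary~\ref{bpa.3} already supplies the link with the Selberg zeta function, and then to check that each step of the reduction costs only a factor depending on $\ell$, $g$ and $n$.

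Consider first $\ell=0,1$. Since $\Delta_{0}=\Delta_{\bpa}$ on functions has a one-dimensional kernel (the constants, by \eqref{rr.24}), the left-hand side of corollary~\ref{bpa.3} has a simple zero at $s=1$, where $\tfrac{s(s-1)}{2}$ also has a simple zero, so that the quotient by $\tfrac{s(s-1)}{2}$ tends to $\det'(\Delta_{0})$ as $s\to1$. The function $Z_{\Sigma}(s)$ likewise has a simple zero at $s=1$; dividing the identity of corollary~\ref{bpa.3} by $\tfrac{s(s-1)}{2}$ and letting $s\to1$ therefore gives $\det'(\Delta_{0})=\alpha_{0,g,n}\,Z_{\Sigma}'(1)$, with $\alpha_{0,g,n}$ the (nonzero) limiting value of the remaining explicit factor. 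For $\ell=1$, the relation \eqref{lge.7}, whose conjugating operator $u^{*}\bpa_{1}$ has cokernel $\ker\bpa_{0}=\ker\Delta_{0}$, gives $\det'(\Delta_{1})=D_{1,n}\det'(\Delta_{0})$ with $D_{1,n}$ depending only on $n$, whence $\det'(\Delta_{1})=\alpha_{1,g,n}Z_{\Sigma}'(1)$ as well.

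For $\ell\ge 2$ I would iterate the recursion. Starting from \eqref{lge.7}, $\det'(\Delta_{\ell})=D_{\ell,n}\det(\Delta_{\ell-1}+\ell-1)$, and then apply the shifted form of the conjugation \eqref{lge.3}: each step rewrites $\det(\Delta_{m}+k)$ as $\det(\Delta_{m-1}+k+m-1)$ up to a power of $k$ recording the zero modes of $\Delta_{m}$ (their number given by \eqref{rr.24}, hence a function of $m$, $g$, $n$ only) times a commutator-correction constant of the type \eqref{lge.8} depending only on $m$, $n$ and the shift. After $\ell$ steps one reaches $\det(\Delta_{0}+c_{\ell})$ multiplied by a constant depending only on $\ell$, $g$ and $n$, where $c_{\ell}>0$ is the accumulated shift; tracking the arithmetic shows $c_{\ell}$ is precisely the value of $\tfrac{s(s-1)}{2}$ at $s=\ell$, so that, $\Delta_{0}+c_{\ell}$ being invertible, corollary~\ref{bpa.3} applies directly at $s=\ell$ and yields $\det'(\Delta_{\ell})=\alpha_{\ell,g,n}Z_{\Sigma}(\ell)$, the meromorphic factor of corollary~\ref{bpa.3} evaluated at the integer $\ell$ being absorbed into $\alpha_{\ell,g,n}$. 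Positivity of $\alpha_{\ell,g,n}$ follows because $\det'(\Delta_{\ell})$ is the exponential of a real number and $Z_{\Sigma}(\ell)>0$ for $\ell\ge 2$, the defining product \eqref{Selberg.1} being a convergent product of factors in $(0,1)$; for $\ell=0,1$ one uses in addition $Z_{\Sigma}'(1)>0$, which holds since $Z_{\Sigma}$ has a simple zero at $s=1$ and is positive for $s>1$. Finally $D^{-}_{\ell}D^{+}_{\ell}=2\Delta_{\ell}$, and the scaling law $\det'(2\Delta_{\ell})=2^{\,\zeta_{\Delta_{\ell}}(0)}\det'(\Delta_{\ell})$ of the zeta-regularized determinant, with $\zeta_{\Delta_{\ell}}(0)$ the constant term in the heat expansion minus $\dim\ker\Delta_{\ell}$ --- hence, by Gauss--Bonnet and \eqref{rr.24}, a function of $\ell$, $g$, $n$ --- reduces that case to the previous one.

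The crux of the argument is the claim that all the intermediate constants, most importantly the commutator corrections, which are nonzero precisely because ${}^{R}\operatorname{Tr}$ does not vanish on commutators, depend only on $\ell$, $g$ and $n$ and not on the point of Teichm\"uller space. This is the localization phenomenon indicated after \eqref{lge.8}: the regularized trace of a commutator is determined by the Taylor expansions of the relevant Schwartz kernels at the boundary of the diagonal, and in Vaillant's construction \cite{Vaillant} these expansions are built iteratively from a fixed model solution at the cusp, so that --- all cusps of any surface of type $(g,n)$ having isometric standard neighborhoods --- each such constant is $n$ times a single-cusp quantity depending only on the degree and the shift, uniformly in $[\Sigma]$. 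Once this is granted, what remains --- the bookkeeping of the shifts and the identification of $\alpha_{\ell,g,n}$ with the appropriate value of the explicit factor of corollary~\ref{bpa.3} --- is routine.
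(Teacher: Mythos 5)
Your proposal is correct and follows essentially the same route as the paper: reduce $\det'(\Delta_{\ell})$ to a shifted scalar Laplacian via the conjugation recursion \eqref{lge.7}, observe that the commutator corrections $D_{\ell,n}$ depend only on $\ell$ and $n$ by the localization of the renormalized trace near the (isometric) cusp neighborhoods, and then invoke corollary~\ref{bpa.3} at $s=\ell$ (respectively in the limit $s\to 1$ for $\ell=0,1$). The added remarks on positivity of $\alpha_{\ell,g,n}$ and on the rescaling $D^{-}_{\ell}D^{+}_{\ell}=2\Delta_{\ell}$ are consistent with the paper's normalization computation via $\zeta_{\Delta_{\Sigma}}(0;s(s-1))$.
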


\section{The curvature of the Quillen connection}\label{qc.0} $ $\newline
Recall that the determinant bundle of the family of operator $\bpa_{\ell}$ is by definition
\begin{equation}
 \lambda_{\ell}:= \det \ind \bpa_{\ell}= \Lambda^{\max}\ker \bpa_{\ell}\otimes 
            \lrpar{\Lambda^{\max}\coker \bpa_{\ell}}^{-1}
\label{qc.1}\end{equation}
where $\ell\in\bbZ$ and $\Lambda^{\max}$ denotes the maximal exterior power of a vector space.  The 
definition is particularly simple in this case because $\ker \bpa_{l}$ is a vector bundle over
$T_{g,n}$.  The $L^{2}$-norm on $\ker\bpa_{\ell}$ defines a canonical metric on $\lambda_{\ell}$,
the $L^{2}$-metric, denoted $\|\cdot\|$.  An alternative metric which is more interesting geometrically is 
the \textbf{Quillen metric},
\begin{equation}
   \|\cdot\|_{Q} := \lrpar{\det D^{-}_{\ell}D^{+}_{\ell}}^{-\frac12}\|\cdot\|.
\label{qc.2}\end{equation}
Following the discussion of \S~9.7 in \cite{BGV}, we will associate to $\|\cdot\|_{Q}$ a compatible connection called the \textbf{Quillen connection}.  In order to do that, consider over
$\cT_{g,n}$ the $\bbZ_{2}$-graded bundle
\begin{equation}
  \cE_{\ell}= \cE^{+}_{\ell}\oplus \cE^{-}_{\ell}, \quad
                      \cE^{+}_{\ell}:= \Lambda^{\ell,0}\lrpar{\cT_{g,n}/T_{g,n}}, \quad
                      \cE^{-}_{\ell}:= \Lambda^{\ell,1}\lrpar{\cT_{g,n}/T_{g,n}}. 
\label{qc.3}\end{equation}  
Let also $\pi_{*}\cE_{\ell}\to T_{g,n}$ be the Fr\'echet bundle whose fiber at $[\Sigma]\in T_{g,n}$ is
\begin{equation}
    \pi_{*}\cE_{\ell,[\Sigma]}:= \dot{\cC}^{\infty}\lrpar{\Sigma; \left.\cE_{l}\right|_{\Sigma}\otimes 
     |\Lambda_{\Sigma}|^{\frac{1}{2}}},
\label{qc.4}\end{equation}
where $|\Lambda_{\Sigma}|$ is the density bundle on $\Sigma$ and
$\dot{\cC}^{\infty}\lrpar{\Sigma; \left.\cE_{l}\right|_{\Sigma}\otimes 
     |\Lambda_{\Sigma}|^{\frac{1}{2}}}$ is the space of smooth
sections of $\left. \cE_{l}\right|_{\Sigma}\otimes 
     |\Lambda_{\Sigma}|^{\frac{1}{2}}$
with rapid decay at infinity.
The family of Dirac type operators 
\begin{equation}
   D_{\ell}:= \sqrt{2}\lrpar{ \bpa_{\ell}+ \bpa_{\ell}^{*}}, \quad D^{+}_{\ell}=\sqrt{2}\bpa_{\ell},
   \quad D^{-}_{\ell}= \sqrt{2}\bpa_{\ell}^{*},
\label{qc.5}\end{equation}
acts from $\pi_{*}\cE_{\ell}$ to $\pi_{*}\cE_{\ell}$.  One of the reasons that motivates
the introduction of the fibre density bundle in the definition of $\pi_{*}\cE$ is that in this
way the canonical connection on
$\pi: \cT_{g,n}\to T_{g,n}$ induces a connection on $\pi_{*}\cE_{\ell}$, denoted
$\nabla^{\pi_{*}\cE_{\ell}}$, which is automatically compatible with the metric
of $\pi_{*}\cE_{\ell}$ (\cf proposition 9.13 in \cite{BGV}).  Notice also that the
density bundle $|\Lambda_{\Sigma}|$ is canonically trivialized by the section
$|dg_{\Sigma}|$ so that $D_{\ell}$ acts on $\pi_{*}\cE_{\ell}$ in a natural way. 
To the family of Dirac type operators $D_{\ell}$, we can associate
a superconnection
\begin{equation}
   \bbA_{\ell}:= D_{\ell} + \nabla^{\pi_{*}\cE_{\ell}}
\label{qc.6}\end{equation}
and its rescaled version
\begin{equation}
   \bbA_{\ell}^{s}:= s^{\frac12}D_{\ell} + \nabla^{\pi_{*}\cE_{\ell}}.
\label{qc.7}\end{equation}
For $s\in \bbR^{+}$, we can define two differential forms $\alpha^{\pm}_{\ell}\in \cA(T_{g,n})(s)$,
\begin{equation}
\begin{split}
  \alpha^{\pm}_{\ell}(s) &:= {}^{R}\Tr_{\pi_{*}\cE^{\pm}_{\ell}}\lrpar{
                   \frac{\pa \bbA_{\ell}^{s}}{\pa s} e^{-(\bbA^{s}_{\ell})^{2}}} \\
                   &= \frac{1}{2s^{\frac12}} {}^{R}\Tr_{\pi_{*}\cE^{\pm}_{\ell}}
                   \lrpar{D_{\ell} e^{-(\bbA^{s}_{\ell})^{2}}},
\end{split}
\label{qc.8}\end{equation}
by taking the trace with respect to $\cE_{\ell}^{+}$ and $\cE_{\ell}^{-}$ respectively.
The $0$-form component of $(\bbA^{2}_{\ell})^{s}$ is $s D_{\ell}^{2}$, while its 
$1$-form component is $s^{\frac12}[\nabla^{\pi_{*}\cE_{l}},D_{\ell}]$.  On the other hand,
the $1$-form component of $e^{-(\bbA^{s}_{\ell})^{2}}$ is given by
\begin{equation}
\begin{split}
\lrpar{ e^{-(\bbA^{s}_{\ell})^{2}}}_{[1]} &= (-s) \int_{0}^{1} e^{-(1-\sigma)sD^{2}_{\ell}} s^{-\frac12}
                         [\nabla^{\pi_{*}\cE_{\ell}},D_{\ell}] e^{-\sigma s D^{2}_{\ell}} d\sigma \\
         &= -s^{\frac12} \int_{0}^{1} e^{-(1-\sigma)sD^{2}_{\ell}}
                         [\nabla^{\pi_{*}\cE_{\ell}},D_{\ell}] e^{-\sigma sD^{2}_{\ell}} d\sigma.
\end{split}
\label{qc.9}\end{equation}
The following observation will turn out to be very useful.
\begin{lemma}
The Schwartz kernel of $[\nabla^{\pi_{*}\cE_{\ell}},D_{\ell}^{\pm}]$ vanishes to all order at the front face.
In particular, for $P\in \Psi^{-\infty}(\cT_{g,n}/T_{g,n};\cE_{\ell})$,
\[
    {}^{R}\STr\lrpar{ [ [\nabla^{\pi_{*}\cE_{\ell}},D_{l}^{\pm}], P]}=0
\]
\label{qc.10}\end{lemma}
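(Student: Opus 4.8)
The plan is to localize the commutator $[\nabla^{\pi_*\cE_\ell},D_\ell^\pm]$ near the boundary and show it is in fact a differential operator whose coefficients vanish rapidly at $\pa\Sigma$ — strong enough to kill the front face. First I would recall that $D_\ell = \sqrt{2}(\bpa_\ell + \bpa_\ell^*)$ is a $\hc$-operator, which in the collar \eqref{bc.14} has the schematic form $\rho^{-1}\cdot(\text{cusp first-order operator})$, and that $\nabla^{\pi_*\cE_\ell}$ is the connection induced on $\pi_*\cE_\ell$ by the canonical connection $\cP$ via the local trivialization \eqref{con.8}. The commutator of a connection with a differential operator is again a differential operator (of the same order), so $[\nabla^{\pi_*\cE_\ell},D_\ell^\pm]$ is a first-order $\hc$-differential operator with values in $p^*T^*_{g,n}$; the only issue is the behavior of its coefficients as $\rho\to 0$.

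The key point is formula \eqref{con.18}: the covariant derivatives $\nabla^\cP_\mu\bpa_\ell$, $\nabla^\cP_{\bar\mu}\bpa_\ell$, and their adjoint counterparts, are all given by multiplication by the Beltrami differential $\mu$ (or $\bar\mu$) composed with a fixed operator. Since $\mu\in\Omega^{-1,1}(\Sigma)$ is a harmonic Beltrami differential, it decays exponentially fast as one approaches any puncture — as already noted in \S\ref{con.0} and used in the proof of Lemma~\ref{lf.108}. Hence in the coordinates \eqref{bc.7}, every coefficient of $[\nabla^{\pi_*\cE_\ell},D_\ell^\pm]$ vanishes to infinite order at $\pa\Sigma$, i.e. its Schwartz kernel, which is supported on the diagonal, vanishes to all orders at the boundary of the diagonal. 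In the language of the $\hc$-heat calculus of \cite{Albin-Rochon}, the front face of the relevant double space sits over the boundary of the diagonal, so a diagonal-supported kernel vanishing to infinite order at $\pa\Sigma$ vanishes to all orders at the front face; I would cite the structure of the front face from \cite{Albin-Rochon} for this identification.

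For the second assertion, given $P\in\Psi^{-\infty}(\cT_{g,n}/T_{g,n};\cE_\ell)$, the double commutator $[[\nabla^{\pi_*\cE_\ell},D_\ell^\pm],P]$ lies in the smoothing ideal, and its renormalized supertrace is computed by integrating the restriction of its Schwartz kernel to the diagonal. Since the renormalized trace of a commutator $[A,B]$ with $A\in\Psi^1_{\hc}$ and $B$ smoothing picks up only an anomaly term supported at the front face (this is exactly the failure of $\sideset{^R}{}\Tr$ to vanish on commutators, as recalled in \S\ref{lge.0}), and since the kernel of $[\nabla^{\pi_*\cE_\ell},D_\ell^\pm]$ vanishes to all orders there, that anomaly vanishes; thus $\sideset{^R}{}\STr([[\nabla^{\pi_*\cE_\ell},D_\ell^\pm],P])=0$. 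The main obstacle is making the front-face vanishing statement precise: one must be careful that $[\nabla^{\pi_*\cE_\ell},D_\ell^\pm]$ genuinely involves only $\nabla^\cP\bpa_\ell$ and $\nabla^\cP\bpa_\ell^*$ (so that \eqref{con.18} applies) together with the curvature term $\nabla^\cP\circ\nabla^\cP$ which is $0$-th order and also built from $\mu$'s, and that no contribution arises from the density factor $|\Lambda_\Sigma|^{1/2}$ — but this is controlled by $\nabla^\cP dg_{\Sigma}=0$ from \S\ref{con.0}, so the density is parallel and contributes nothing to the commutator.
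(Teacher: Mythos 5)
Your proposal is correct and follows essentially the same route as the paper: the key input in both is the rapid decay of the harmonic Beltrami differential at the cusps (equivalently, the explicit formulas \eqref{con.18} together with the parallelism of $u=1/y^{2}$), which forces the coefficients of the commutator to vanish to infinite order at the boundary, and the second claim then follows because the trace anomaly on commutators is localized at the front face. The paper's proof in fact explicitly offers your argument via \eqref{con.18} as an alternative to its primary phrasing in terms of the asymptotic holomorphy of $f^{\mu}$.
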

\begin{proof}
Let $[\Sigma]\in T_{g,n}$ be given.
If $\mu\in \Omega^{-1,1}(\Sigma)$ is a harmonic Beltrami differential, let 
$f^{\mu}:\bbH\to \bbH$ be the unique diffeomorphism satisfying the Beltrami equation
\[
             \frac{\pa f^{\mu}}{\pa \overline{z}}= \mu \frac{\pa f^{\mu}}{\pa z}
\] 
and fixing the points $0,1,\infty$.  In particular, since $\mu$ is a cusp form, it decreases rapidly
as $z\to \infty$.  This means that $f^{\mu}$ is asymptotically holomorphic as $z\to \infty$.  From the 
definition of the canonical connection on $\pi:\cT_{g,n}\to T_{g,n}$, this means that 
 the Schwartz kernel $[\nabla^{\pi_{*}\cE_{\ell}},D^{+}_{\ell}](z,z')$  decreases quickly as
$z$ and $z'$ approaches a cusp in $\Sigma$.  For $D^{-}_{\ell}=\sqrt{2}\bpa_{\ell}^{*}$, the same is
true, but since $\bpa_{\ell}^{*}= -u^{\ell-1}\pa u^{-\ell}$, we also need to use the fact that
$u= \frac{1}{y^{2}}$ is parallel with respect to the canonical connection on $\pi:\cT_{g,n}\to T_{g,n}$.  
Now, we know that ${}^{R}\STr\lrpar{ [ [\nabla^{\pi_{*}\cE_{\ell}},D_{\ell}^{\pm}], P]}$  
depends linearly on the asymptotic expansions of $[\nabla^{\pi_{*}\cE_{\ell}},D_{\ell}^{\pm}]$
and $P$ at the corner of $\Sigma\times \Sigma$.  The asymptoptic expansion of 
$[\nabla^{\pi_{*}\cE_{\ell}},D_{\ell}^{\pm}]$ being trivial, the result follows.

Alternatively, the result also follows directly from the explicit formulas
\eqref{con.18}.
\end{proof}
With this lemma, the discussion of \S~9.7 in \cite{BGV} applies almost directly to our context.
\begin{lemma}
The one form component of the differential forms $\alpha^{+}_{\ell}(s)$ 
satisfies
\[
       \overline{\alpha^{+}_{\ell}(s)_{[1]}}= \alpha^{-}_{\ell}(s)_{[1]} 
\]
and has an asymptotic expansion of the form 
\[
    \alpha^{+}_{\ell}(s)_{[1]}\sim \sum^{\infty}_{-N} s^{\frac{k}{2}}(a_{k}+ b_{k} \log s)
\]
as $s\to 0^{+}$.
\label{qc.11}\end{lemma}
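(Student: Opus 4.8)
The plan is to follow the standard transgression argument of \S 9.7 in \cite{BGV}, adapted to the renormalized trace, using Lemma \ref{qc.10} as the key input that makes the renormalization behave like an ordinary trace for the relevant computations. First I would establish the reality relation $\overline{\alpha^{+}_{\ell}(s)_{[1]}} = \alpha^{-}_{\ell}(s)_{[1]}$. From \eqref{qc.8} and \eqref{qc.9}, the one-form component of $\alpha^{\pm}_{\ell}(s)$ is, up to the factor $\tfrac{1}{2\sqrt s}$, the renormalized trace of $D^{\mp}_{\ell}$ times the one-form part of $e^{-(\bbA^{s}_{\ell})^{2}}$ restricted to $\cE^{\pm}_{\ell}$. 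Since $D_{\ell}$ is formally self-adjoint with $(D^{+}_{\ell})^{*} = D^{-}_{\ell}$, and since the superconnection $\bbA^{s}_{\ell}$ is built from the metric connection $\nabla^{\pi_{*}\cE_{\ell}}$ (compatible with the $L^2$-metric, \cf proposition 9.13 in \cite{BGV}), complex conjugation of forms on $T_{g,n}$ interchanges the $\cE^{+}$ and $\cE^{-}$ pieces and swaps $D^{+}_{\ell} \leftrightarrow D^{-}_{\ell}$; the renormalized trace respects this because the renormalization procedure (\cf \cite{Albin-Rochon}) is defined symmetrically. So the first claim is essentially a symmetry bookkeeping check.

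For the asymptotic expansion as $s\to 0^{+}$, I would write $\alpha^{+}_{\ell}(s)_{[1]}$ explicitly using \eqref{qc.8}--\eqref{qc.9}:
\[
  \alpha^{+}_{\ell}(s)_{[1]} = -\tfrac12 \int_{0}^{1} {}^{R}\Tr_{\pi_{*}\cE^{+}_{\ell}}\lrpar{ D^{-}_{\ell}\, e^{-(1-\sigma)sD^{2}_{\ell}} [\nabla^{\pi_{*}\cE_{\ell}},D_{\ell}] e^{-\sigma sD^{2}_{\ell}}} d\sigma,
\]
and then use the cyclicity of the renormalized trace \emph{modulo} commutator terms controlled by Lemma \ref{qc.10}: because the Schwartz kernel of $[\nabla^{\pi_{*}\cE_{\ell}},D^{\pm}_{\ell}]$ vanishes to all orders at the front face, any rearrangement of the operators under ${}^{R}\Tr$ involving a commutator with $[\nabla^{\pi_{*}\cE_{\ell}},D_{\ell}]$ contributes zero. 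This reduces the expression to (a constant times) ${}^{R}\Tr\lrpar{ [\nabla^{\pi_{*}\cE_{\ell}},D_{\ell}]\, D_{\ell}\, e^{-sD^{2}_{\ell}}}$ or a similar form with a single heat operator. The short-time asymptotics of such renormalized heat traces for $\fD$-operators of Dirac type — which exist by Vaillant's heat kernel construction \cite{Vaillant} (\cf the expansion \eqref{expansion.1} and the discussion around it in \S\ref{detL.0}) — have the form $\sum_{k\ge -N} s^{k/2}(a_k + b_k \log s)$, with the $\log s$ terms arising exactly from the non-compactness/cusp contributions; the factor $\tfrac{1}{2\sqrt s}$ shifts the exponents by $-\tfrac12$ but preserves the shape of the expansion.

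The main obstacle I anticipate is not the algebra but the analytic justification that the renormalized trace of the heat-type operators appearing here genuinely admits an asymptotic expansion of this precise $s^{k/2}(a_k + b_k\log s)$ form, uniformly enough to differentiate and integrate in $s$ and $\sigma$. This requires knowing that the Bismut superconnection heat kernel $e^{-(\bbA^{s}_{\ell})^{2}}$, as an operator-valued form on $T_{g,n}$ whose coefficients are fibrewise $\fD$-pseudodifferential operators, has a polyhomogeneous expansion at the front face of the relevant heat space — this is precisely the content of the families heat kernel construction in \cite{Albin-Rochon} built on \cite{Vaillant}. Granting that (as the excerpt permits, since \eqref{lf.1} already invokes the family index theorem of \cite{Albin-Rochon}), the expansion follows by expanding each factor and collecting powers; the $\sigma$-integral over $[0,1]$ is harmless since it only combines terms of the same homogeneity. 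One should double-check that no fractional powers other than half-integers appear — this is guaranteed because the cusp model operator at each end is a rank-one first-order operator (\cf proposition \ref{fp.2}) whose contributions to the heat expansion involve only $t^{-1}$, $t^{-1/2}\log t$, $t^{-1/2}$ and integer powers, matching \eqref{expansion.1}.
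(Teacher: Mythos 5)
Your proposal follows the paper's proof essentially verbatim: the short-time expansion is obtained by appealing to the Vaillant/Albin--Rochon heat kernel construction together with the pushforward theorem for manifolds with corners, and the conjugation identity is obtained from $(D^{+}_{\ell})^{*}=D^{-}_{\ell}$, unitarity of $\nabla^{\pi_{*}\cE_{\ell}}$, and Lemma~\ref{qc.10}. One caution: the real content of the first identity is not that the renormalization is ``defined symmetrically'' but that taking adjoints reverses the order of $D^{-}_{\ell}$ and the heat factors, so matching the result with $\alpha^{-}_{\ell}(s)_{[1]}$ requires a cyclic rearrangement under ${}^{R}\STr$ whose cost is a supercommutator containing $[\nabla^{\pi_{*}\cE_{\ell}},D^{\pm}_{\ell}]$ as a factor --- exactly what Lemma~\ref{qc.10} kills --- and that step should be written out rather than absorbed into ``symmetry bookkeeping.''
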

\begin{proof}
The asymptotic expansion as $s\to 0^{+}$ follows from the construction of the heat kernel
by Vaillant \cite{Vaillant}, its generalization in \cite{Albin-Rochon} and an application
of the pushforward theorem for manifolds with corners.  
From \eqref{qc.9}, we have that
\begin{equation}
\begin{split}
  \alpha^{+}_{\ell}(s)_{[1]}&= -\frac{1}{2} {}^{R}\Tr_{\pi_{*}\cE^{+}_{\ell}} 
        \lrpar{ D_{\ell} \int_{0}^{1} e^{-(1-\sigma)sD^{2}_{\ell}}[\nabla^{\pi_{*}\cE_{\ell}},D_{\ell}]
                                             e^{-\sigma sD^{2}_{\ell}}d\sigma}  \\
       &=  -\frac{1}{2} {}^{R}\STr_{\pi_{*}\cE_{\ell}} 
        \lrpar{ D_{\ell}^{-} \int_{0}^{1} e^{-(1-\sigma)sD^{2}_{\ell}}[\nabla^{\pi_{*}\cE_{l}},D_{\ell}^{+}]
                                             e^{-\sigma sD^{2}_{\ell}}d\sigma}.                             
\end{split}
\label{qc.12}\end{equation}
Taking the complex conjugate and using the fact that $(D^{+}_{\ell})^{*}= D^{-}_{\ell}$ and that
$\nabla^{\pi_{*}\cE_{\ell}}$ is a unitary connection, we have that
\begin{equation}
\begin{aligned}
\overline{\alpha^{+}_{\ell}(s)_{[1]}} &=  
                              -\frac{1}{2} {}^{R}\Tr_{\pi_{*}\cE^{+}_{\ell}} 
                              \lrpar{ \int_{0}^{1} e^{-\sigma sD^{2}_{\ell}} [D_{\ell}^{-},\nabla^{\pi_{*}\cE_{\ell}}] e^{-(1-\sigma)sD^{2}_{\ell}} D^{+}_{\ell} d\sigma } \\
               &=                \frac{1}{2} {}^{R}\STr_{\pi_{*}\cE_{\ell}} 
                              \lrpar{ \int_{0}^{1} e^{-(1-\sigma)s D^{2}_{\ell}} [D_{\ell}^{-},\nabla^{\pi_{*}\cE_{\ell}}] e^{-\sigma sD^{2}_{\ell}} D^{+}_{\ell} d\sigma } \\
               &=     \frac{1}{2} {}^{R}\STr_{\pi_{*}\cE_{\ell}} 
                              \lrpar{ D^{+}_{\ell} \int_{0}^{1} e^{-(1-\sigma)s D^{2}_{\ell}} [D_{\ell}^{-},\nabla^{\pi_{*}\cE_{\ell}}] e^{-\sigma sD^{2}_{\ell}} d\sigma } \\
               & \quad + \frac{1}{2} {}^{R}\STr_{\pi_{*}\cE_{\ell}} 
                              \lrpar{ \left[\int_{0}^{1} e^{-(1-\sigma)s D^{2}_{\ell}} [D_{\ell}^{-},\nabla^{\pi_{*}\cE_{\ell}}] e^{-\sigma sD^{2}_{\ell}}d\sigma, D^{+}_{\ell} \right] } \\                        &=    - \frac{1}{2} {}^{R}\Tr_{\pi_{*}\cE_{\ell}^{-}} 
                              \lrpar{ D^{+}_{\ell} \int_{0}^{1} e^{-(1-\sigma)s D^{2}_{\ell}} [D_{\ell}^{-},\nabla^{\pi_{*}\cE_{\ell}}] e^{-\sigma sD^{2}_{\ell}} d\sigma } +0 \\
               &= \alpha^{-}_{\ell}(s)_{[1]},               
\end{aligned}
\label{qc.13}\end{equation}
where Lemma~\ref{qc.10} was used in the line before the last one.
\end{proof}
We would like to consider the one-forms
\begin{equation}
  \beta^{\pm}_{\ell}(z) := 2\int_{0}^{\infty} t^{z}\alpha^{\pm}_{\ell}(t)_{[1]} dt.
\label{qc.14}\end{equation}
By Lemma~\ref{qc.11}, this  integral is holomorphic for $\Re z>>0$ and admits a meromorphic extension
to the whole complex plane.  Thus, in this sense, 
$\beta^{+}_{\ell}(z)$ and $\beta^{-}_{\ell}(z)$ are well-defined meromorphic families of one-forms.  We
are interested in their finite part at $z=0$.  More precisely, we would like to consider the one-forms
\begin{equation}
    \beta^{\pm}_{\ell}:= \left.\frac{d}{dz} \frac{1}{\Gamma(z)}\beta^{\pm}_{\ell}(z)\right|_{z=0}
\label{qc.16}\end{equation}
where the evaluation at zero means that we take the finite part at $z=0$.
More generally, we will use the notation
\[
   \fpint \gamma dt:= \lrpar{\frac{d}{dz}\frac{1}{\Gamma(z)}\int_{0}^{\infty} t^{z} \gamma(t) dt}_{z=0}
\]
whenever the integral $\int_{0}^{\infty} t^{z-1} \gamma(t) dt$ varies meromorphically in $z$.
Thus, in this notation,
\[
   \beta_{l}^{\pm}= 2\fpint \alpha^{\pm}_{\ell}(t)_{[1]}dt.
\]
\begin{lemma}
Seen as a function on $T_{g,n}$, the differential of $\zeta'(0;D^{-}_{\ell}D^{+}_{\ell})$ is given by
$d\zeta'(0;D^{-}_{\ell}D^{+}_{\ell})= -\lrpar{ \beta^{+}_{\ell}+\beta^{-}_{\ell}}$.
\label{qc.17}\end{lemma}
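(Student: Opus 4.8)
The plan is to adapt the computation of the curvature of the Quillen connection from \S~9.7 of \cite{BGV}, the only new feature being that the fibrewise trace has to be replaced by the renormalized trace ${}^{R}\Tr$, whose failure to be cyclic is exactly what Lemma~\ref{qc.10} controls. As a function on $T_{g,n}$, the zeta function of $D^{-}_{\ell}D^{+}_{\ell}$ is
\[
  \zeta\lrpar{z;D^{-}_{\ell}D^{+}_{\ell}}=\frac{1}{\Gamma(z)}\int_{0}^{\infty}t^{z}\,{}^{R}\Tr_{\pi_{*}\cE^{+}_{\ell}}\lrpar{e^{-tD^{-}_{\ell}D^{+}_{\ell}}-\cP_{\ker D^{+}_{\ell}}}\,\frac{dt}{t},
\]
holomorphic for $\Re z\gg 0$ and, by the small--time expansion of the heat kernel coming from \cite{Vaillant,Albin-Rochon}, meromorphic on all of $\bbC$; and $\zeta'(0;D^{-}_{\ell}D^{+}_{\ell})$ is the finite part of $\pa_{z}\zeta(z;D^{-}_{\ell}D^{+}_{\ell})$ at $z=0$. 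I want to compute its exterior derivative along $T_{g,n}$ using the canonical connection $\nabla^{\pi_{*}\cE_{\ell}}$ introduced in \S~\ref{qc.0}.

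First I would differentiate under the (meromorphic in $z$) integral sign. The projection term drops out, since ${}^{R}\Tr_{\pi_{*}\cE^{+}_{\ell}}(\cP_{\ker D^{+}_{\ell}})=\dim_{\bbC}\ker D^{+}_{\ell}$ is constant on $T_{g,n}$ by \eqref{rr.24}. For the heat term, Duhamel's formula gives
\[
  \nabla^{\pi_{*}\cE_{\ell}}\lrpar{e^{-tD^{-}_{\ell}D^{+}_{\ell}}}=-\int_{0}^{t}e^{-(t-\sigma)D^{-}_{\ell}D^{+}_{\ell}}\,[\nabla^{\pi_{*}\cE_{\ell}},D^{-}_{\ell}D^{+}_{\ell}]\,e^{-\sigma D^{-}_{\ell}D^{+}_{\ell}}\,d\sigma,
\]
and applying ${}^{R}\Tr_{\pi_{*}\cE^{+}_{\ell}}$, using that it is compatible with $\nabla^{\pi_{*}\cE_{\ell}}$ up to a boundary defect that vanishes by Lemma~\ref{qc.10} and invoking cyclicity, gives $d\,{}^{R}\Tr_{\pi_{*}\cE^{+}_{\ell}}(e^{-tD^{-}_{\ell}D^{+}_{\ell}})=-t\,{}^{R}\Tr_{\pi_{*}\cE^{+}_{\ell}}\lrpar{[\nabla^{\pi_{*}\cE_{\ell}},D^{-}_{\ell}D^{+}_{\ell}]\,e^{-tD^{-}_{\ell}D^{+}_{\ell}}}$. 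I would then expand the bracket by Leibniz as $[\nabla^{\pi_{*}\cE_{\ell}},D^{-}_{\ell}]D^{+}_{\ell}+D^{-}_{\ell}[\nabla^{\pi_{*}\cE_{\ell}},D^{+}_{\ell}]$, use the intertwining $D^{\pm}_{\ell}e^{-tD^{\mp}_{\ell}D^{\pm}_{\ell}}=e^{-tD^{\pm}_{\ell}D^{\mp}_{\ell}}D^{\pm}_{\ell}$ and cyclicity once more, and compare with \eqref{qc.12} and the analogous identity for $\alpha^{-}_{\ell}(t)_{[1]}$ to recognize the right-hand side as $-2t\lrpar{\alpha^{+}_{\ell}(t)_{[1]}+\alpha^{-}_{\ell}(t)_{[1]}}$. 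Feeding this back in and using $\beta^{\pm}_{\ell}(z)=2\int_{0}^{\infty}t^{z}\alpha^{\pm}_{\ell}(t)_{[1]}dt$ gives $d\,\zeta(z;D^{-}_{\ell}D^{+}_{\ell})=-\frac{1}{\Gamma(z)}\lrpar{\beta^{+}_{\ell}(z)+\beta^{-}_{\ell}(z)}$ for $\Re z\gg 0$, hence for all $z$ by meromorphic continuation. Differentiating in $z$ at $z=0$, using $\frac{1}{\Gamma(z)}\rest{z=0}=0$, $\pa_{z}\rest{z=0}\frac{1}{\Gamma(z)}=1$ and the at most simple pole of $\beta^{\pm}_{\ell}(z)$ at $z=0$ (Lemma~\ref{qc.11}) — which is precisely the finite-part prescription \eqref{qc.16} defining $\beta^{\pm}_{\ell}$ — yields $d\,\zeta'(0;D^{-}_{\ell}D^{+}_{\ell})=-\lrpar{\beta^{+}_{\ell}+\beta^{-}_{\ell}}$, as claimed.

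I expect the main obstacle to be the non-cyclicity of the renormalized trace: in general ${}^{R}\Tr(AB)\ne{}^{R}\Tr(BA)$, the defect being a term localized at the boundary, so one cannot blindly rearrange factors as in the compact case. Lemma~\ref{qc.10} is precisely the input that makes all the trace defects occurring above vanish, because the Schwartz kernel of $[\nabla^{\pi_{*}\cE_{\ell}},D^{\pm}_{\ell}]$ vanishes to infinite order at the front face — equivalently, parallel transport for the canonical connection is asymptotically holomorphic and the family of hyperbolic metrics asymptotically parallel near the cusps, as in the proof of Lemma~\ref{lf.108}. One also has to keep careful track of the signs coming from the (super-)Leibniz rule for the superconnection $\bbA_{\ell}$ and from the $\bbZ_{2}$-grading, and to justify interchanging $d_{T_{g,n}}$ with the meromorphic continuation in $z$; the latter follows from the fact that the small--time expansion of $e^{-tD^{-}_{\ell}D^{+}_{\ell}}$ and of its base derivative is locally uniform on $T_{g,n}$, a consequence of the uniform construction of the heat kernel in \cite{Vaillant,Albin-Rochon}.
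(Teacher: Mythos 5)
Your overall strategy is exactly the paper's: differentiate the zeta function under the integral sign via Duhamel, expand $[\nabla^{\pi_{*}\cE_{\ell}},D^{-}_{\ell}D^{+}_{\ell}]$ by Leibniz, and match the two resulting terms with $\beta^{\pm}_{\ell}$, using Lemma~\ref{qc.10} to dispose of the commutator defects of the renormalized trace. There is, however, one concrete error in the step you wrote out: the Leibniz expansion must be
\[
  [\nabla^{\pi_{*}\cE_{\ell}}, D^{-}_{\ell}D^{+}_{\ell}]
  =[\nabla^{\pi_{*}\cE_{\ell}}, D^{-}_{\ell}]D^{+}_{\ell}
   - D^{-}_{\ell}[\nabla^{\pi_{*}\cE_{\ell}}, D^{+}_{\ell}],
\]
with a minus sign, not the plus sign you state. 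The reason is that in the conventions fixed by \eqref{qc.9} the brackets $[\nabla^{\pi_{*}\cE_{\ell}},D^{\pm}_{\ell}]$ appearing in $\alpha^{\pm}_{\ell}(t)_{[1]}$ are supercommutators, i.e.\ anticommutators $\nabla D^{\pm}+D^{\pm}\nabla$ of two odd elements (this is forced by reading off the one-form component of $(\bbA^{s}_{\ell})^{2}$), while $[\nabla^{\pi_{*}\cE_{\ell}},D^{-}_{\ell}D^{+}_{\ell}]$ is the ordinary commutator; the super-Leibniz rule then produces the relative minus sign. This is not a cosmetic point: with the plus sign your computation outputs $\beta^{+}_{\ell}-\beta^{-}_{\ell}$ rather than $-\lrpar{\beta^{+}_{\ell}+\beta^{-}_{\ell}}$, because $\beta^{+}_{\ell}=-\fpint {}^{R}\Tr_{\pi_{*}\cE^{+}_{\ell}}\lrpar{D^{-}_{\ell}[\nabla^{\pi_{*}\cE_{\ell}},D^{+}_{\ell}]e^{-tD^{2}_{\ell}}}dt$ while $\beta^{-}_{\ell}=+\fpint {}^{R}\Tr_{\pi_{*}\cE^{+}_{\ell}}\lrpar{[\nabla^{\pi_{*}\cE_{\ell}},D^{-}_{\ell}]D^{+}_{\ell}e^{-tD^{2}_{\ell}}}dt$ enter with opposite signs, and only the super-Leibniz minus sign makes them add up. You do flag at the end that the super-Leibniz signs need care, so the fix is entirely within the scope of what you describe, but as written the identification of the integrand with $-2t\lrpar{\alpha^{+}_{\ell}(t)_{[1]}+\alpha^{-}_{\ell}(t)_{[1]}}$ does not follow from the displayed expansion. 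Everything else — the disposal of the projector term, the interchange of $d$ with the meromorphic continuation, the use of Lemma~\ref{qc.10} and the intertwining $D^{\pm}_{\ell}e^{-tD^{\mp}_{\ell}D^{\pm}_{\ell}}=e^{-tD^{\pm}_{\ell}D^{\mp}_{\ell}}D^{\pm}_{\ell}$ — matches the paper's argument.
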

\begin{proof}
Using Duhamel's formula, we have that $d\zeta'(0;D^{-}_{\ell}D^{+}_{\ell})$ is given by
\begin{multline}
 \fpint {}^{R}\Tr_{\pi_{*}\cE^{+}_{\ell}}
  \lrpar{ -\frac{1}{t}\int_{0}^{t} e^{-(t-s)D^{-}_{\ell}D^{+}_{\ell}} [\nabla^{\pi_{*}\cE_{\ell}}, 
  D^{-}_{\ell}D^{+}_{\ell}] e^{-sD^{-}_{\ell}D^{+}_{\ell}} ds } dt =\\
  \fpint {}^{R}\Tr_{\pi_{*}\cE^{+}_{\ell}}
  \lrpar{ -\int_{0}^{1} e^{-(1-s)tD^{-}_{\ell}D^{+}_{\ell}} [\nabla^{\pi_{*}\cE_{\ell}}, 
  D^{-}_{\ell}D^{+}_{\ell}] e^{-stD^{-}_{\ell}D^{+}_{\ell}} ds } dt.
\label{qc.18}\end{multline}
On the other hand, we have 
\begin{equation}
\begin{split}
\beta_{\ell}^{+} &= 2 \fpint \alpha^{+}_{\ell}(t)_{[1]} dt  \\
                 &= -\fpint {}^{R}\Tr_{\pi_{*}\cE^{+}_{\ell}}\lrpar{
                 D^{-}_{\ell} \int_{0}^{1} e^{-(1-s)tD^{2}_{\ell}} 
                 [\nabla^{\pi_{*}\cE_{\ell}},D^{+}_{\ell}]e^{-st D^{2}_{\ell}}ds} dt \\
                 &= -\fpint {}^{R}\Tr_{\pi_{*}\cE^{+}_{\ell}}\lrpar{
                  \int_{0}^{1} e^{-(1-s)tD^{2}_{\ell}} 
                  D^{-}_{\ell}[\nabla^{\pi_{*}\cE_{\ell}},D^{+}_{\ell}]e^{-st D^{2}_{\ell}}ds} dt,
\end{split}                 
\label{qc.19}\end{equation}
while
\begin{equation}
\begin{split}
\beta^{-}_{\ell} &= -\fpint {}^{R}\Tr_{\pi_{*}\cE^{-}_{\ell}}\lrpar{
                 D^{+}_{\ell} \int_{0}^{1} e^{-(1-s)tD^{2}_{\ell}} 
                 [\nabla^{\pi_{*}\cE_{\ell}},D^{-}_{\ell}]e^{-st D^{2}_{\ell}}ds} dt \\
                 &= \fpint {}^{R}\STr_{\pi_{*}\cE_{\ell}}\lrpar{
                 D^{+}_{\ell} \int_{0}^{1} e^{-(1-s)tD^{2}_{\ell}} 
                 [\nabla^{\pi_{*}\cE_{\ell}},D^{-}_{\ell}]e^{-st D^{2}_{\ell}}ds} dt \\
                 &= \fpint {}^{R}\STr_{\pi_{*}\cE_{\ell}}\lrpar{
                 \int_{0}^{1} e^{-(1-s)tD^{2}_{\ell}} 
                 [\nabla^{\pi_{*}\cE_{\ell}},D^{-}_{\ell}] D^{+}_{\ell}e^{-st D^{2}_{\ell}}ds} dt \\
                 &\quad + \fpint {}^{R}\STr_{\pi_{*}\cE_{\ell}}\lrpar{
                 \left[D^{+}_{\ell}, \int_{0}^{1} e^{-(1-s)tD^{2}_{\ell}} 
                 [\nabla^{\pi_{*}\cE_{\ell}},D^{-}_{\ell}]e^{-st D^{2}_{\ell}}ds\right]} dt \\
                 &=\fpint {}^{R}\STr_{\pi_{*}\cE_{\ell}}\lrpar{
                 \int_{0}^{1} e^{-(1-s)tD^{2}_{\ell}} 
                 [\nabla^{\pi_{*}\cE_{\ell}},D^{-}_{\ell}] D^{+}_{\ell}e^{-st D^{2}_{\ell}}ds} dt +0,
\end{split}
\label{qc.20}\end{equation}
using Lemma~\ref{qc.10} in the last step.  The result then follows by combining \eqref{qc.18},
\eqref{qc.19} and \eqref{qc.20} and using the formula
\[
  [\nabla^{\pi_{*}\cE_{\ell}}, D^{-}_{\ell}D^{+}_{\ell}]=
   [\nabla^{\pi_{*}\cE_{\ell}}, D^{-}_{\ell}]D^{+}_{\ell} - D^{-}_{\ell}[\nabla^{\pi_{*}\cE_{\ell}}, D^{+}_{\ell}].  
\]
\end{proof}
If $P_{0}: \pi_{*}\cE^{+}_{\ell}\to \ker \bpa_{\ell}$ denotes the orthogonal projection onto the kernel of 
$\bpa_{\ell}$, then the connection 
\begin{equation}
  \nabla^{\ker{\bpa_{\ell}}}= P_{0}\nabla^{\pi_{*}\cE^{+}_{\ell}}P_{0}
\label{qc.21}\end{equation}
is compatible with the $L^{2}$-metric.  It is holomorphic, so that $\nabla^{\ker\bpa_{\ell}}$ is the 
Chern connection of $\ker \bpa_{\ell}$ with respect to the $L^{2}$-metric $\|\cdot\|$.  It defines
a connection on $\det\bpa_{\ell}$, $\nabla^{\det\bpa_{\ell}}$, which is the 
Chern connection of $\det\bpa_{\ell}$ with respect to the $L^{2}$-metric.  We define the 
\textbf{Quillen connection} on $\det \bpa_{\ell}$ to be the connection given by
\begin{equation}
\nabla^{Q_{\ell}}:= \nabla^{\det\bpa_{\ell}}+ \beta^{+}_{\ell}.
\label{qc.22}\end{equation}  
\begin{proposition}
The Quillen connection is the Chern connection of $\det\bpa_{\ell}$ with respect to the Quillen
metric $\|\cdot\|_{Q_{\ell}}$.  
\label{qc.23}\end{proposition}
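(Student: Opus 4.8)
The plan is to follow the scheme of \cite[\S 9.7]{BGV}, with Lemma~\ref{qc.10} supplying the extra input that makes the argument go through in the present non-compact setting. Since $\ker\bpa_{\ell}$ is a holomorphic vector bundle over $T_{g,n}$, the determinant line bundle $\det\bpa_{\ell}$ is holomorphic, and the Chern connection of a holomorphic Hermitian line bundle is the \emph{unique} connection that is simultaneously compatible with the metric and has $(0,1)$-component equal to $\overline{\partial}$. Thus it suffices to verify that $\nabla^{Q_{\ell}}$ of \eqref{qc.22} enjoys these two properties relative to $\|\cdot\|_{Q_{\ell}}$, and then to invoke uniqueness.

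First I would check that $(\nabla^{Q_{\ell}})^{0,1}=\overline{\partial}$. By construction $\nabla^{\det\bpa_{\ell}}$ is the Chern connection of $\det\bpa_{\ell}$ for the $L^{2}$-metric, so its $(0,1)$-part is already $\overline{\partial}$; it therefore suffices to prove that $\beta^{+}_{\ell}$ is a form of type $(1,0)$ on $T_{g,n}$. From \eqref{qc.12}, $\alpha^{+}_{\ell}(s)_{[1]}$ — and hence $\beta^{+}_{\ell}$ — is built out of the single operator $[\nabla^{\pi_{*}\cE_{\ell}},D^{+}_{\ell}]=\sqrt{2}\,[\nabla^{\pi_{*}\cE_{\ell}},\bpa_{\ell}]$, the remaining factors $D^{-}_{\ell}$ and $e^{-\sigma sD_{\ell}^{2}}$ being $0$-forms. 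By the explicit formulas \eqref{con.18} one has $\nabla^{\cP}_{\overline{\mu}}\bpa_{\ell}=0$, so in Bers coordinates this commutator involves only the $d\varepsilon_{i}$ and none of the $d\overline{\varepsilon}_{i}$; that is, $[\nabla^{\pi_{*}\cE_{\ell}},\bpa_{\ell}]$ is of type $(1,0)$, and hence so is $\beta^{+}_{\ell}$. (Symmetrically $\beta^{-}_{\ell}$ is of type $(0,1)$, consistently with $\overline{\beta^{+}_{\ell}}=\beta^{-}_{\ell}$ from Lemma~\ref{qc.11}.) Consequently $(\nabla^{Q_{\ell}})^{0,1}=(\nabla^{\det\bpa_{\ell}})^{0,1}=\overline{\partial}$.

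It then remains to check compatibility with $\|\cdot\|_{Q_{\ell}}$. Because $(\nabla^{Q_{\ell}})^{0,1}=\overline{\partial}$ and $\|\cdot\|_{Q_{\ell}}$ is Hermitian, unitarity reduces to the local identity $(\nabla^{Q_{\ell}})^{1,0}s=\bigl(\partial\log\|s\|^{2}_{Q_{\ell}}\bigr)\,s$ for an arbitrary local holomorphic frame $s$ of $\det\bpa_{\ell}$. Since $\nabla^{\det\bpa_{\ell}}$ is the $L^{2}$-Chern connection, $(\nabla^{\det\bpa_{\ell}})^{1,0}s=\bigl(\partial\log\|s\|^{2}\bigr)s$, while from \eqref{qc.2} and the $\zeta$-regularized definition of the determinant one has $\log\|s\|^{2}_{Q_{\ell}}=\log\|s\|^{2}-\log\det\bigl(D^{-}_{\ell}D^{+}_{\ell}\bigr)=\log\|s\|^{2}+\zeta'(0;D^{-}_{\ell}D^{+}_{\ell})$ as functions on $T_{g,n}$. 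As $\beta^{+}_{\ell}$ is of type $(1,0)$, the required identity therefore amounts to identifying $\beta^{+}_{\ell}$ with the $(1,0)$-part $\partial\zeta'(0;D^{-}_{\ell}D^{+}_{\ell})$. This is precisely what Lemmas~\ref{qc.11} and~\ref{qc.17} deliver: Lemma~\ref{qc.11} gives $\overline{\beta^{+}_{\ell}}=\beta^{-}_{\ell}$, Lemma~\ref{qc.17} gives $d\zeta'(0;D^{-}_{\ell}D^{+}_{\ell})=-(\beta^{+}_{\ell}+\beta^{-}_{\ell})$, and since $\zeta'(0;D^{-}_{\ell}D^{+}_{\ell})$ is real-valued we have $\overline{\partial}\,\zeta'(0;D^{-}_{\ell}D^{+}_{\ell})=\overline{\partial\zeta'(0;D^{-}_{\ell}D^{+}_{\ell})}$; comparing the $(1,0)$ and $(0,1)$ bidegrees of the two sides then pins down $\partial\zeta'(0;D^{-}_{\ell}D^{+}_{\ell})$ in terms of $\beta^{+}_{\ell}$, and substituting back into the displayed local identity yields exactly \eqref{qc.22}. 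Hence $\nabla^{Q_{\ell}}$ is the Chern connection of $\|\cdot\|_{Q_{\ell}}$.

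The step requiring the most care is the type decomposition of $\beta^{+}_{\ell}$: it rests on the asymptotic holomorphy of the family $\bpa_{\ell}$ in Bers coordinates — equivalently $\nabla^{\cP}_{\overline{\mu}}\bpa_{\ell}=0$ in \eqref{con.18} — and on Lemma~\ref{qc.10}, which is what legitimizes manipulating the $\fpint$-renormalized traces as if they were ordinary traces (in particular discarding the commutators $[\,\cdot\,,D^{+}_{\ell}]$ that arise when passing between $\cE^{+}_{\ell}$ and $\cE^{-}_{\ell}$, exactly as in the proofs of Lemmas~\ref{qc.11} and~\ref{qc.17}). Once these structural facts are granted, together with Lemmas~\ref{qc.11} and~\ref{qc.17}, the remainder is the standard uniqueness-of-Chern-connection bookkeeping, the only genuine subtlety being to keep the conjugation and $2\pi\sqrt{-1}$ conventions straight when matching the sign in \eqref{qc.22} against the sign in Lemma~\ref{qc.17}.
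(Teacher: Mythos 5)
Your proof is correct and follows essentially the same route as the paper's: holomorphicity (equivalently $(\nabla^{Q_{\ell}})^{0,1}=\bpa$) via the fact that $[\nabla^{\pi_{*}\cE_{\ell}},D^{+}_{\ell}]$ is of type $(1,0)$ by \eqref{con.18}, and metric compatibility via Lemmas~\ref{qc.11} and~\ref{qc.17}. The one sign you defer --- reconciling $\partial\zeta'(0;D^{-}_{\ell}D^{+}_{\ell})=-\beta^{+}_{\ell}$ with the $+\beta^{+}_{\ell}$ in \eqref{qc.22} --- is precisely the convention point the paper also glosses over (it writes the metric-compatible connections as $\nabla^{\det\bpa_{\ell}}-\tfrac12 d\zeta'(0;D^{-}_{\ell}D^{+}_{\ell})+\omega$, which presumes $\|\cdot\|_{Q_{\ell}}^{2}=e^{-\zeta'(0;D^{-}_{\ell}D^{+}_{\ell})}\|\cdot\|^{2}$, i.e.\ the exponent $+\tfrac12$ rather than $-\tfrac12$ in \eqref{qc.2}), so your argument is faithful to the source.
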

\begin{proof}
We need to check that $\nabla^{Q_{\ell}}$ is holomorphic and is compatible with the Quillen metric.  To see
that it is holomorphic, it suffices to check that $\beta^{+}_{\ell}$ is a $(1,0)$-form.  Since
$D^{+}_{\ell}= \sqrt{2}\bpa_{\ell}$ is a family of operators that varies 
holomorphically on $T_{g,n}$, the form
\[
               [\nabla^{\pi_{*}\cE_{\ell}},D^{+}_{\ell}]
\]
has to be a $(1,0)$-form (\cf with \eqref{con.18}).  Directly from the definition of $\beta^{+}_{\ell}$, we thus see it
has to be a $(1,0)$-form.

To see that $\nabla^{Q_{\ell}}$ is compatible with the Quillen metric, notice that in general, a 
connection which is compatible with the Quillen metric is of the form
\begin{equation}
   \nabla^{\det\bpa_{l}}-\frac{1}{2}d\zeta'\lrpar{0;D^{-}_{\ell}D^{+}_{\ell}} + \omega
\label{qc.24}\end{equation}
where $\omega$ is any imaginary one-form.  The result then follows by noticing that, taking $\omega=\frac{\beta^{+}_{\ell}-\overline{\beta^{+}_{\ell}}}{2}$ and using lemma~\ref{qc.11}
and lemma~\ref{qc.17}, we get the
Quillen connection.
\end{proof}
We can now compute the curvature of the Quillen connection.

\begin{theorem}
The curvature of the Quillen connection is given by
\begin{equation*}
\begin{split}
\frac{\sqrt{-1}}{2\pi}(\nabla^{Q_{\ell}})^{2}&=  \lrpar{
                   \int_{\cT_{g,n}/T_{g,n}} \Ch\lrpar{ T^{-\ell}(\cT_{g,n}/T_{g,n})} \cdot
                          \Td\lrpar{T\lrpar{\cT_{g,n}/T_{g,n}}} }_{[2]}  \\ 
                & -\sum_{i=1}^{n} \frac{e_{i}}{12}
\end{split}
\end{equation*}
\label{qc.25}\end{theorem}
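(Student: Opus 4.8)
The plan is to run the superconnection–transgression computation of \cite[\S9.7]{BGV} and \cite{BF} in the renormalized-trace framework, and to read off the extra boundary contributions directly from the local family index formula \eqref{lf.1}. By the definition \eqref{qc.22} of the Quillen connection, $\nabla^{Q_{\ell}}=\nabla^{\det\bpa_{\ell}}+\beta^{+}_{\ell}$, so on the line bundle $\det\bpa_{\ell}$ one has $(\nabla^{Q_{\ell}})^{2}=(\nabla^{\det\bpa_{\ell}})^{2}+d\beta^{+}_{\ell}$. I would compute the two pieces separately: extract the curvature $(\nabla^{\det\bpa_{\ell}})^{2}$ of the $L^{2}$-Chern connection from \eqref{lf.1}, and show that $d\beta^{+}_{\ell}$ cancels exactly the $d$-exact transgression term appearing there, so that what survives is the local index density together with the degree-two part of the vertical eta form.

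First I would use that the $\tfrac{\sqrt{-1}}{2\pi}$-normalized curvature of the $L^{2}$-connection on $\det\bpa_{\ell}$ (induced from the Chern connections \eqref{qc.21} on $\ker\bpa_{\ell}$ and $\ker\bpa_{\ell}^{*}$) is the degree-two component of $\Ch(\Ind\bpa_{\ell})=\Ch(\ker\bpa_{\ell})-\Ch(\ker\bpa_{\ell}^{*})$. Taking the degree-two part of \eqref{lf.1}: the first term gives $\bigl[\int_{\cT_{g,n}/T_{g,n}}\Ch(T^{-\ell}_{v})\Td(T_{v})\bigr]_{[2]}$; by Lemma~\ref{lf.3} the degree-two part of $\widehat{\eta}(D^{H}_{\ell})$ vanishes; by Proposition~\ref{cef.6} together with the Taylor expansion $\tfrac{1}{2\tanh(x/2)}-\tfrac{1}{x}=\tfrac{x}{12}+\mathcal{O}(x^{3})$, the degree-two part of $\widehat{\eta}(D^{V}_{\ell})=\sum_{i}\bigl(\tfrac{1}{2\tanh(e_{i}/2)}-\tfrac{1}{e_{i}}\bigr)$ equals $\sum_{i=1}^{n}\tfrac{e_{i}}{12}$; and the last term contributes $-\bigl(2\pi\sqrt{-1}\bigr)^{-1}d$ of the degree-one part of $\fpint \Str\bigl(\tfrac{\pa\bbA^{t}_{D_{\ell}}}{\pa t}e^{-(\bbA^{t}_{D_{\ell}})^{2}}\bigr)\,dt$.

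Next I would identify this degree-one transgression integral with $d\beta^{+}_{\ell}$. In the notation of \eqref{qc.8}, the degree-one part of $\Str\bigl(\tfrac{\pa\bbA^{t}_{D_{\ell}}}{\pa t}e^{-(\bbA^{t}_{D_{\ell}})^{2}}\bigr)$ is (up to sign) $\alpha^{+}_{\ell}(t)_{[1]}-\alpha^{-}_{\ell}(t)_{[1]}$, so by \eqref{qc.14} its renormalized integral is $\pm\tfrac12(\beta^{+}_{\ell}-\beta^{-}_{\ell})$; using Lemma~\ref{qc.11} ($\overline{\beta^{+}_{\ell}}=\beta^{-}_{\ell}$) and Lemma~\ref{qc.17} ($d\zeta'(0;D^{-}_{\ell}D^{+}_{\ell})=-(\beta^{+}_{\ell}+\beta^{-}_{\ell})$, hence $d\beta^{-}_{\ell}=-d\beta^{+}_{\ell}$), its exterior derivative is $\pm d\beta^{+}_{\ell}$. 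Tracking the powers of $2\pi\sqrt{-1}$ prescribed by Remark~\ref{convention.1}, one checks that the term $\tfrac{\sqrt{-1}}{2\pi}d\beta^{+}_{\ell}$ coming from $(\nabla^{Q_{\ell}})^{2}=(\nabla^{\det\bpa_{\ell}})^{2}+d\beta^{+}_{\ell}$ exactly cancels the last term of the previous paragraph. Assembling the pieces then yields
\[
\frac{\sqrt{-1}}{2\pi}(\nabla^{Q_{\ell}})^{2}=\Bigl[\int_{\cT_{g,n}/T_{g,n}}\Ch\bigl(T^{-\ell}(\cT_{g,n}/T_{g,n})\bigr)\cdot\Td\bigl(T(\cT_{g,n}/T_{g,n})\bigr)\Bigr]_{[2]}-\sum_{i=1}^{n}\frac{e_{i}}{12},
\]
which is the assertion.

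The main obstacle is this last cancellation: it rests on the transgression identity $\tfrac{\pa}{\pa t}\Str(e^{-(\bbA^{t}_{\ell})^{2}})=-d\,\Str\bigl(\tfrac{\pa\bbA^{t}_{\ell}}{\pa t}e^{-(\bbA^{t}_{\ell})^{2}}\bigr)$ holding \emph{without anomaly} for the renormalized supertrace, and on the legitimacy of integrating it from $t=0$ to $t=\infty$ against $t^{z}/\Gamma(z)$ and evaluating at $z=0$. The anomaly-freeness is exactly Lemma~\ref{qc.10}: the Schwartz kernel of $[\nabla^{\pi_{*}\cE_{\ell}},D^{\pm}_{\ell}]$ vanishes to infinite order at the front face, because the Beltrami solutions $f^{\mu}$ are asymptotically holomorphic at the cusps and $u=y^{-2}$ is $\cP$-parallel, which is precisely what makes the regularized supertrace vanish on the relevant commutators. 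The $t\to0^{+}$ behavior is controlled by the short-time expansion of Lemma~\ref{qc.11} coming from Vaillant's heat kernel construction. One also needs $\beta^{+}_{\ell}$ to be of type $(1,0)$ so that $\nabla^{Q_{\ell}}$ is genuinely the Chern connection (the proof of Proposition~\ref{qc.23}), which follows from the holomorphic dependence of $D^{+}_{\ell}=\sqrt{2}\,\bpa_{\ell}$. Beyond these inputs the argument is a bookkeeping matter of matching normalizations, and I expect the sign and $2\pi\sqrt{-1}$ tracking—rather than the analysis, all of which is in place by \S\ref{hco.0}--\S\ref{Selberg.0}—to be the most error-prone step.
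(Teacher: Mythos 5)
Your proposal is correct and follows essentially the same route as the paper's proof: it decomposes $(\nabla^{Q_{\ell}})^{2}$ as $(\nabla^{\det\bpa_{\ell}})^{2}+d\beta^{+}_{\ell}$, reads off $(\nabla^{\det\bpa_{\ell}})^{2}$ from the degree-two part of the local family index formula \eqref{lf.10b}, and cancels the transgression term by identifying $\tfrac12(\beta^{+}_{\ell}-\beta^{-}_{\ell})$ with the degree-one renormalized transgression integral using Lemmas~\ref{qc.10}, \ref{qc.11} and \ref{qc.17}. The only cosmetic difference is that you make explicit the Taylor expansion giving $\widehat{\eta}(D^{V}_{\ell})_{[2]}=\sum_{i}e_{i}/12$, which the paper leaves implicit.
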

\begin{proof}
With respect to the connection $\nabla^{\det\bpa_{\ell}}$, we have
\begin{equation}
    \frac{\sqrt{-1}}{2\pi} \lrpar{\nabla^{\det\bpa_{l}}}^{2}= \Ch\lrpar{\nabla^{\ker\bpa_{l}}}_{[2]}.
\label{qc.26}\end{equation}
But by definition, since $\nabla^{\pi_{*}\cE_{\ell}}= \bbA_{[1]}$ for $\bbA$ the Bismut superconnection,
(\cf Proposition 10.16 in \cite{BGV}), we have by \eqref{qc.21} that $\nabla^{\ker\bpa_{l}}$ is the 
connection used in Theorem~\ref{lf.8}.  Thus, $\Ch\lrpar{\nabla^{\ker\bpa_{\ell}}}_{[2]}$ is
given by formula~\eqref{lf.10b}, so that
\begin{equation}
\begin{split}
\frac{\sqrt{-1}}{2\pi}(\nabla^{\det\bpa_{\ell}})^{2}&=  \lrpar{
                   \int_{\cT_{g,n}/T_{g,n}} \Ch\lrpar{ T^{-\ell}(\cT_{g,n}/T_{g,n})} \cdot
                          \Td\lrpar{T\lrpar{\cT_{g,n}/T_{g,n}}} }_{[2]}  \\ 
                & -\sum_{i=1}^{n}\frac{e_{i}}{12}
              -\frac{1}{2\pi\sqrt{-1}} d\int_{0}^{\infty} {}^{R}\STr\lrpar{\frac{\pa \bbA_{t}}{\pa t} e^{-\bbA^{2}_{t}}}_{[1]} dt.
\end{split}
\label{qc.27}\end{equation}
On the other hand, from the definition of the Quillen connection, we have
\begin{equation}
\lrpar{\nabla^{Q_{\ell}}}^{2}=\lrpar{\nabla^{\det\bpa_{l}}}^{2} + d\beta^{+}_{\ell}.
\label{qc.28}\end{equation}
From lemma~\ref{qc.17}, $d(\beta^{+}_{\ell}+ \beta^{-}_{\ell})=0$, hence
\begin{equation}
\lrpar{\nabla^{Q_{\ell}}}^{2}=\lrpar{\nabla^{\det\bpa_{l}}}^{2} + \frac{1}{2}
d\lrpar{\beta^{+}_{\ell}-\beta^{-}_{\ell}}.
\label{qc.29}\end{equation}
But using the fact the Bismut superconnection $\bbA$ is given by $D_{\ell}+ \nabla^{\pi_{*}\cE}$ up
to terms of degree $2$, we have 
\begin{equation}
\begin{split}
\frac{1}{2}\lrpar{\beta^{+}_{\ell}- \beta^{-}_{\ell}} &=
                            \fpint \lrpar{ \alpha_{\ell}^{+}(t)_{[1]} - \alpha_{\ell}^{-}(t)_{[1]}}dt
         = \fpint {}^{R}\STr\lrpar{\frac{\pa \bbA_{t}}{\pa t} e^{-\bbA^{2}_{t}}}_{[1]} dt  \\
         &= \int_{0}^{\infty}    {}^{R}\STr\lrpar{\frac{\pa \bbA_{t}}{\pa t} e^{-\bbA^{2}_{t}}}_{[1]} dt. 
\end{split}
\label{qc.30}\end{equation}
In the last step, we have used the fact ${}^{R}\STr\lrpar{\frac{\pa \bbA_{t}}{\pa t} e^{-\bbA^{2}_{t}}}$ is integrable
in $t$, so that there is no need to regularize.  Combining \eqref{qc.27}, \eqref{qc.29} and 
\eqref{qc.30}, the result follows.
\end{proof}

We should compare our result with the local index formula of 
Takhtajan and Zograf \cite{TZ} 
\begin{equation}
  (\nabla^{Q_{\ell}})^{2}= \frac{6\ell^{2}-6\ell +1}{12\pi^{2}} \omega_{WP}
-\frac{1}{9}\omega_{\TZ},
\label{qc.44}\end{equation}
where $\omega_{WP}$ is the Weil-Peterson K\"ahler form on $T_{g,n}$ and 
$\omega_{\TZ}$ is the K\"ahler form on $T_{g,n}$ defined by Takhtajan and
Zograf in terms of the 
cusp ends of the fibres of $p:\cT_{g,n}\to T_{g,n}$.  A well-known result
of Wolpert \cite{Wolpert}  
(see also p. 424 in \cite{TZ}) shows that 
\begin{multline}
 \lrpar{
                   \int_{\cT_{g,n}/T_{g,n}} \Ch\lrpar{ T^{-\ell}(\cT_{g,n}/T_{g,n})} \cdot
                          \Td\lrpar{T\lrpar{\cT_{g,n}/T_{g,n}}} }_{[2]}=  \\
\frac{6\ell^{2}-6\ell +1}{12\pi^{2}} \omega_{WP}.
\label{qc.45}\end{multline}
Thus, comparing Theorem~\ref{qc.25} with \eqref{qc.44}, we get the following relation. 
\begin{corollary}[Weng \cite{Weng}, Wolpert \cite{Wolpert2}]  For $\ell\ge 0$ and $n>0$, we have
\[
   \widehat{\eta}(D^{V}_{\ell})_{[2]}= 
\sum_{i=1}^{n}\frac{e_{i}}{12}=\frac{1}{9}
         \omega_{\TZ}.
\]
\label{qc.46}\end{corollary}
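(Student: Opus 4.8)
The plan is to harvest the corollary from Theorem~\ref{qc.25} and the Takhtajan--Zograf formula \eqref{qc.44} by comparing the two resulting expressions for the Quillen curvature, after first extracting the degree-two part of the vertical eta form from Proposition~\ref{cef.6}.

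First I would compute $\widehat{\eta}(D^V_\ell)_{[2]}$. By the decomposition \eqref{cef.5} and Proposition~\ref{cef.6} one has $\widehat{\eta}(D^V_\ell) = \sum_{i=1}^n f(e_i)$ with $f(x) = \tfrac{1}{2\tanh(x/2)} - \tfrac{1}{x}$. Using the elementary identity $\tfrac{x}{2\tanh(x/2)} = \tfrac{x}{e^x-1} + \tfrac{x}{2}$ together with $\tfrac{x}{e^x-1} = \sum_{m \ge 0} B_m \tfrac{x^m}{m!} = 1 - \tfrac{x}{2} + \tfrac{x^2}{12} + O(x^4)$, one sees that $f$ is analytic at the origin with $f(x) = \tfrac{x}{12} - \tfrac{x^3}{720} + \cdots$ (only odd powers occur, consistent with Remark~\ref{cef.11}). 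Hence $\widehat{\eta}(D^V_\ell)_{[2]} = \sum_{i=1}^n \tfrac{e_i}{12}$; note this degree-two part is independent of $\ell$.

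Next I would combine Theorem~\ref{qc.25} with Wolpert's identity \eqref{qc.45}, which identifies the fibre-integral term appearing there with $\tfrac{6\ell^2 - 6\ell + 1}{12\pi^2}\,\omega_{WP}$, giving $\tfrac{\sqrt{-1}}{2\pi}(\nabla^{Q_\ell})^2 = \tfrac{6\ell^2 - 6\ell + 1}{12\pi^2}\,\omega_{WP} - \sum_{i=1}^n \tfrac{e_i}{12}$. Comparing with the Takhtajan--Zograf formula \eqref{qc.44}, namely $\tfrac{\sqrt{-1}}{2\pi}(\nabla^{Q_\ell})^2 = \tfrac{6\ell^2 - 6\ell + 1}{12\pi^2}\,\omega_{WP} - \tfrac{1}{9}\,\omega_{\TZ}$, the Weil--Petersson terms cancel and one is left with $\sum_{i=1}^n \tfrac{e_i}{12} = \tfrac{1}{9}\,\omega_{\TZ}$; combined with the previous step this is exactly the asserted chain of equalities. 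The hypotheses $\ell \ge 0$, $n > 0$ are just those under which \eqref{qc.44} is available and the statement is non-vacuous.

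The only step requiring genuine care is the normalization bookkeeping between Theorem~\ref{qc.25}, stated with its explicit factor $\tfrac{\sqrt{-1}}{2\pi}$ and in the paper's own Chern-character convention, and \eqref{qc.44} as quoted from \cite{TZ}: one must check that the coefficients of $\omega_{WP}$ really do agree on the two sides --- which is precisely what \eqref{qc.45} provides --- so that after subtraction nothing but the $\omega_{\TZ}$ term survives. Beyond that, the corollary is an immediate consequence of results already in hand, the substantive input being Theorem~\ref{qc.25} and Proposition~\ref{cef.6} rather than anything new here.
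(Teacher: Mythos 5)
Your proposal is correct and follows exactly the paper's route: the degree-two coefficient $\tfrac{1}{12}$ is read off from the Taylor expansion of $\tfrac{1}{2\tanh(x/2)}-\tfrac{1}{x}$ in Proposition~\ref{cef.6}, and the identity $\sum_i e_i/12=\tfrac19\omega_{\TZ}$ is obtained by comparing Theorem~\ref{qc.25} with \eqref{qc.44} via Wolpert's identification \eqref{qc.45} of the Weil--Petersson terms. This is precisely the (one-line) argument the paper gives, with the expansion step made explicit.
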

The fact the Takhtajan-Zograf K\"ahler form is a rational multiple of the 
curvature of a Hermitian line bundle was first obtained by 
Weng \cite{Weng} using Arakelov theory.  This was later improved and 
finalized by Wolpert \cite{Wolpert2}, who obtained more generally that 
 $e_{i}= \frac{4}{3} \omega_{\TZ,i}$ ($\omega_{\TZ,i}$ is defined
in \eqref{WTZ} below) via a natural intrinsic way to
define metrics on the line bundles $\cL_{i}$.

For completeness, let us recall how the Takhtajan-Zograf K\"ahler form
$\omega_{TZ}$ is defined.  Given a fibre $\Sigma$ of 
$p: \cT_{g,n}\to T_{g,n}$, identify it with a quotient of the 
upper half-plane, $\Sigma\cong \Gamma\setminus\bbH$ where $\Gamma$ is the corresponding
Fuchsian group of type $(g,n)$.  Let $\Gamma_{1},\ldots,\Gamma_{n}$ be
the list of non-conjugate parabolic subgroup of $\Gamma$ as in \eqref{con.2}
so that 
\[
                   \sigma_{i}^{-1}\Gamma_{i}\sigma_{i}=\Gamma_{\infty}
\]
for $i\in\{1,\ldots,n\}$.  The Eisenstein-Mass series 
$E_{i}(z,s)$ associated to the $i^{\text{th}}$ cusp of the group $\Gamma$ is defined
for $\Re s>1$ by the formula
\begin{equation}
  E_{i}(z,s):= \sum_{\gamma\in \Gamma_{i}\setminus \Gamma} 
\Im( \sigma^{-1}_{i}\gamma z)^{s}.
\label{qc.47}\end{equation}
The Eisenstein-Mass series naturally descends to define a function
on the quotient $\Sigma=\Gamma\setminus \bbH$. 
Recall that under the identification of $T_{[\Sigma]}T_{g,n}$ with the
space of harmonic Beltrami differentials $\Omega^{-1,1}(\Sigma)$, the 
Weil-Peterson K\"ahler metric is defined by
\begin{equation}
  \langle\mu,\nu\rangle_{WP}:= \int_{\Sigma} \mu \overline{\nu}dg_{\Sigma}
=\int_{\Sigma} \langle \mu,\nu\rangle_{K^{-1}\otimes \Lambda^{0,1}_{\Sigma}}
dg_{\Sigma} 
\label{qc.48}\end{equation}   
for $\mu,\nu\in T_{[\Sigma]}T_{g,n}$ with corresponding K\"ahler form
given by
\begin{equation}
  \omega_{WP}(\mu,\overline{\nu})= \frac{\sqrt{-1}}{2} 
   \langle \mu, \nu\rangle_{WP}.
\label{qc.49}\end{equation} 
To define their K\"ahler metric, Takhtajan and Zograf considered instead
\begin{equation}
\langle \mu, \nu\rangle_{i}=\int_{\Sigma} \mu \overline{\nu}
  E_{i}(\cdot,2)dg_{\Sigma}, \quad i=1,\ldots n.
\label{qc.50}\end{equation}
Each of these scalar products gives rise to a K\"ahler metric on $T_{g,n}$
with corresponding K\"ahler form 
\begin{equation}
  \omega_{\TZ,i}(\mu,\overline{\nu})= \frac{\sqrt{-1}}{2} 
   \langle \mu, \nu\rangle_{i}, \quad i=1,\ldots,n.
\label{WTZ}\end{equation} 
The sum of these metric is the Takhtajan-Zograf K\"ahler metric
\begin{equation}
         \langle\mu,\nu\rangle_{\TZ}:= \sum_{i=1}^{n}
\langle \mu, \nu\rangle_{i}
\label{qc.51}\end{equation}
with corresponding K\"ahler form given by 
\begin{equation}
\omega_{\TZ}(\mu,\overline{\nu})= \frac{\sqrt{-1}}{2} 
   \langle \mu, \nu\rangle_{\TZ}.
\label{qc.52}\end{equation}

We know from Corollary~\ref{qc.46} that the eta form
$\widehat{\eta}(D^{V}_{\ell})_{[2]}$ is the K\"ahler form of a 
K\"ahler metric.  This is consistent with Theorem~\ref{lf.8} asserting
that the eta form $\widehat{\eta}(D_{\ell}^{V})$ is closed.


\providecommand{\bysame}{\leavevmode\hbox to3em{\hrulefill}\thinspace}
\providecommand{\MR}{\relax\ifhmode\unskip\space\fi MR }
\providecommand{\MRhref}[2]{%
  \href{http://www.ams.org/mathscinet-getitem?mr=#1}{#2}
}
\providecommand{\href}[2]{#2}

\end{document}